\def\N{\mathbb{N}}
\def\Z{\mathbb{Z}}
\def\CC{\mathcal{C}}
\renewcommand{\int}{\operatorname{int}}
\renewcommand{\mid}{:}
\newtheorem{thm}{Theorem}[section]
\newtheorem{lemma}[thm]{Lemma}
\newtheorem{prop}[thm]{Proposition}
\newtheorem{Thm}{Theorem}
\theoremstyle{definition}
\newtheorem{definition}[thm]{Definition}
\theoremstyle{remark}
\newtheorem{remark}[thm]{Remark}
\newtheorem*{Acknowledgements}{Acknowledgements}
\numberwithin{equation}{section}
\tikzstyle{vertex}=[circle,thick]
\tikzstyle{goto}=[->,shorten >=1pt,>=stealth,semithick]
 \title[A dendritic monotile]{An aperiodic monotile that forces nonperiodicity through dendrites}
 \author{Michael Mampusti}
 \address{Michael Mampusti \\ School of Mathematics and Applied Statistics  \\ The University of Wollongong\\ NSW  2522\\ Australia} 
 \email{michael.mampusti@gmail.com}
\author[Michael F. Whittaker]{Michael F. Whittaker}
\address{Michael F. Whittaker, School of Mathematics and Statistics, University of Glasgow, University Place, Glasgow Q12 8QQ, United Kingdom}
\email{Mike.Whittaker@glasgow.ac.uk}
 \thanks{This research was partially supported by EPSRC grant EP/R013691/1, ARC Discovery Project DP150101595, and the Australian Government Research Training Program Scholarship.}
\keywords{aperiodic tilings; dendrites; fractal; monotile; nonperiodic}
\subjclass[2010]{Primary: 52C23; Secondary: 37E25; 05B45}
\begin{document}

\begin{abstract}
We introduce a new type of aperiodic hexagonal monotile; a prototile that admits infinitely many tilings of the plane, but any such tiling lacks any translational symmetry. Adding a copy of our monotile to a patch of tiles must satisfy two rules that apply only to adjacent tiles. The first is inspired by the Socolar--Taylor monotile, but can be realised by shape alone. The second is a dendrite rule; a direct isometry of our monotile can be added to any patch of tiles provided that a tree on the monotile connects continuously with a tree on one of its neighbouring tiles. This condition forces tilings to grow along dendrites, which ultimately results in nonperiodic tilings. Our dendrite rule initiates a new method to produce tilings of the plane.
\end{abstract}

\maketitle

\section{Introduction}

Almost 60 years ago, Hao Wang posed the Domino Problem \cite{Wang}: is there an algorithm that determines whether a given set of square prototiles, with specified matching rules, can tile the plane? Robert Berger \cite{Ber} proved the Domino Problem is undecidable by producing an aperiodic set of 20,426 prototiles, a collection of prototiles that tile the plane but only nonperiodically (lacks any translational periodicity). This remarkable discovery began the search for other (not necessarily square) aperiodic prototile sets. In the 1970s, there were two stunning results giving examples of very small aperiodic prototile sets. The first was by Raphael Robinson who found a set of six square prototiles \cite{Rob}. The second was by Roger Penrose who reduced this number to two \cite{GS,Pen}. Penrose's discovery led to the planar einstein (one-stone) problem: is there a single aperiodic prototile?

In a crowning achievement of tiling theory, the existence of an aperiodic monotile was resolved almost a decade ago by Joshua Socolar and Joan Taylor \cite{ST,Tay}. Several candidates had been put forth prior to their monotile, but the experts immediately recognised the importance of Socolar and Taylor's discovery \cite{AL,BGG,BG,Lee,LM}. The Socolar--Taylor monotile is a hexagonal tile with two local rules that enforce aperiodicity. The first rule forces tiles to arrange themselves into collections of triangles, and the second rule ensures these triangles are nested, thereby forcing the resulting tiling to be nonperiodic. One limitation of the Socolar--Taylor monotile is that the second local rule applies to pairs of non-adjacent tiles, so aperiodicity is not enforced by adjacencies. Another limitation is that reflected copies of the monotile are required to tile the plane. The search for an aperiodic monotile with local rules that only apply to adjacent tiles or does not require reflections has been a driving force of research in tiling theory since Socolar and Taylor's amazing discovery.

\begin{figure}[h]
\begin{center}
\begin{tikzpicture}
\node (tilea) at (0,0) {\includegraphics[width=0.13\textwidth]{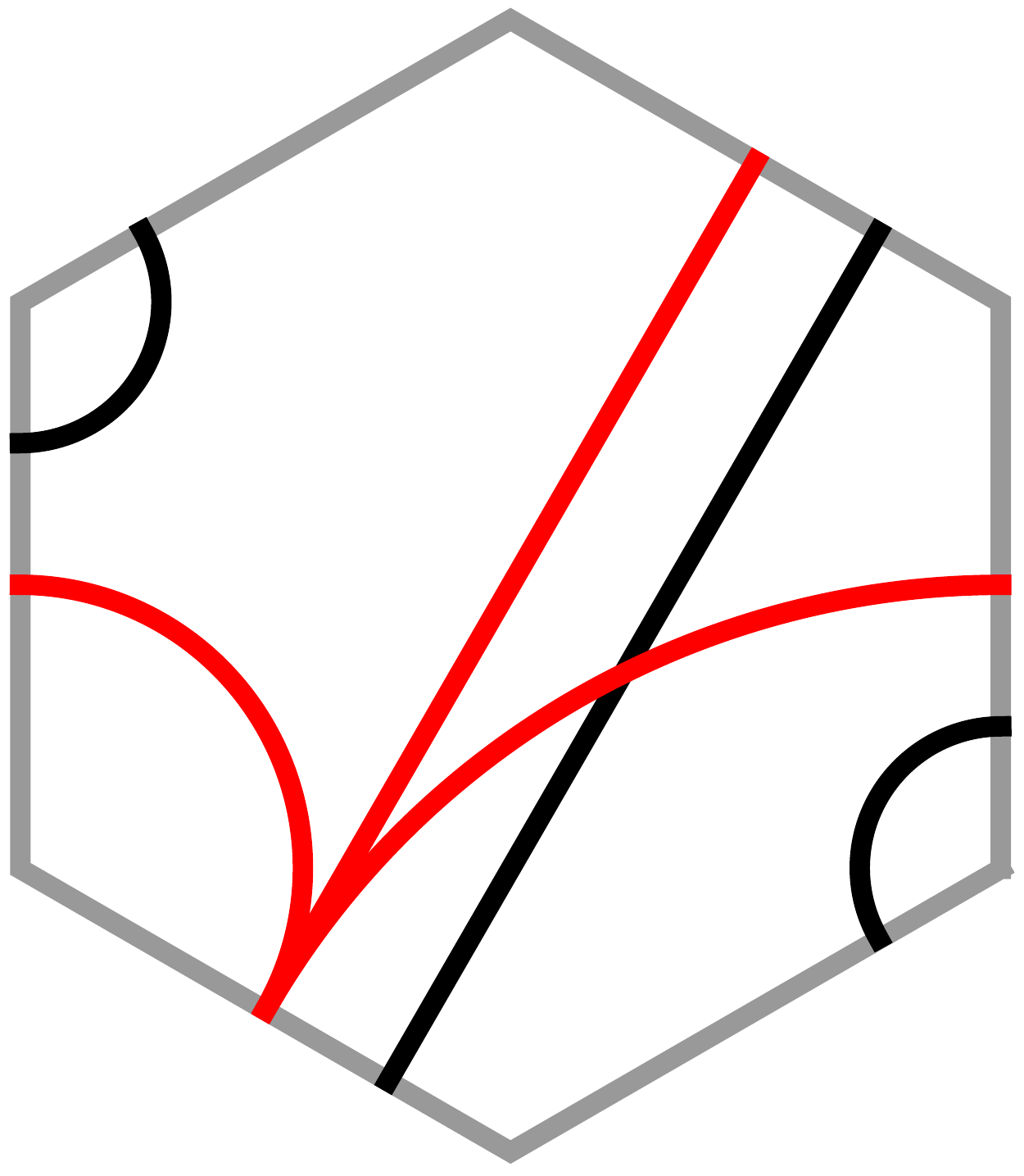}};
\node (tileb) at (5,0) {\includegraphics[width=0.13\textwidth]{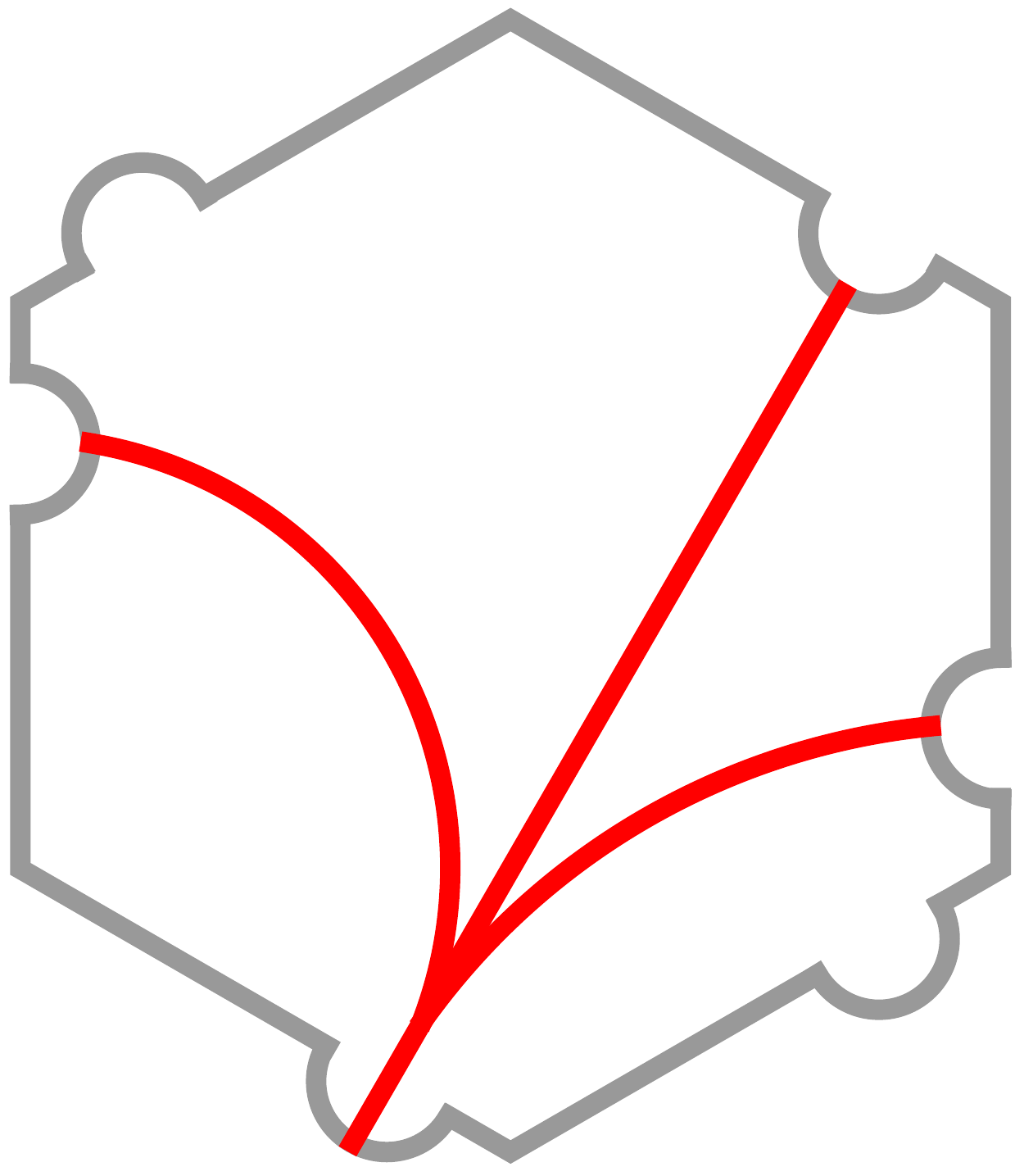}};
\node (tilec) at (10,0) {\includegraphics[width=0.13\textwidth]{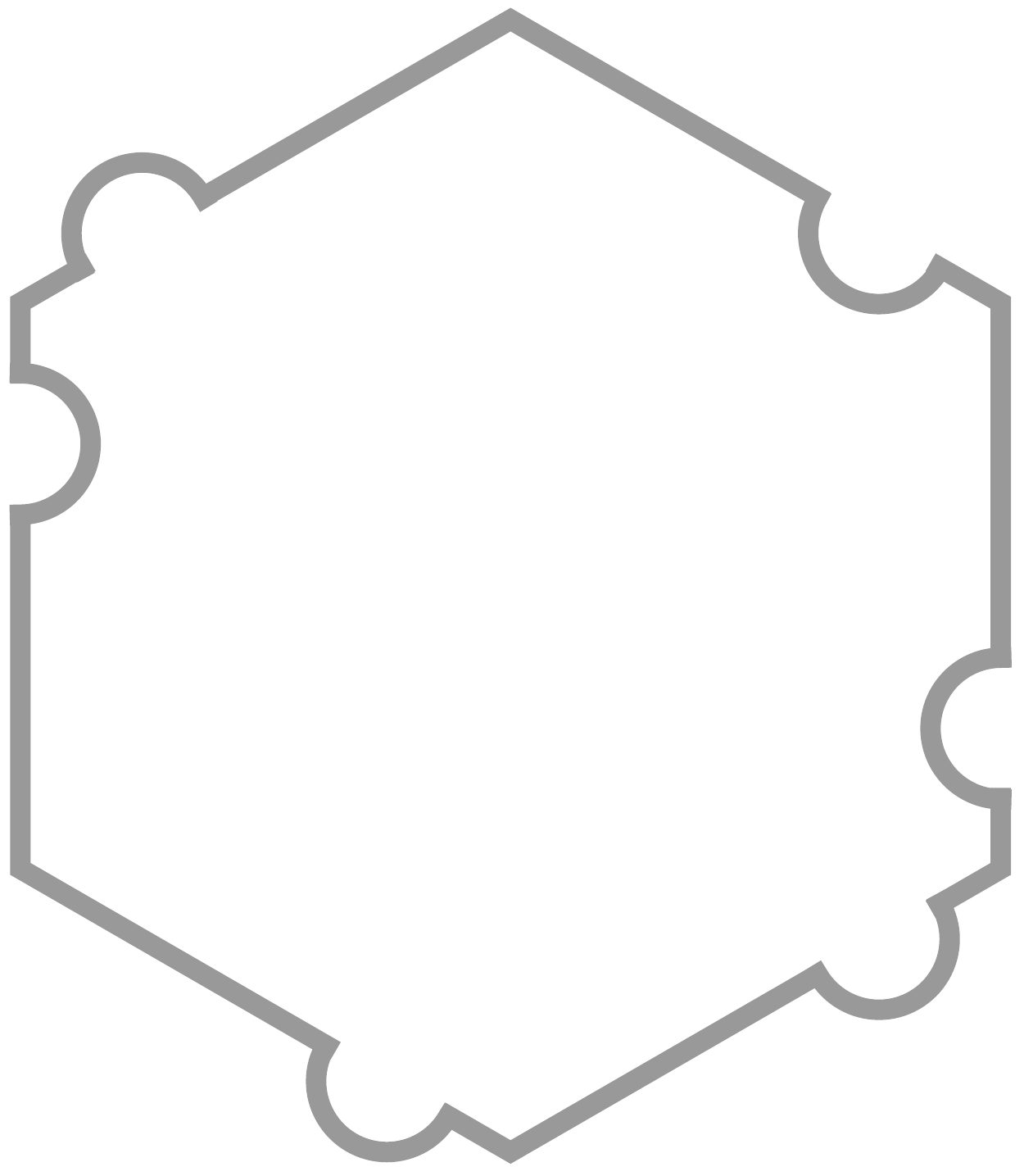}};
\node at (10.55,0.55) {\Tiny $-$};
\node at (10.7,-0.28) {\Tiny $-$};
\node at (9.27,0.34) {\Tiny $-$};
\node at (9.8,-0.9) {\Tiny $+$};
\end{tikzpicture}
\end{center}
\caption{Representations of our monotile: the R1-curves appear as black curves or edge decorations,  and the R2-tree as red trees or magnetic dipoles.}
\label{the tile}
\end{figure}

In this paper, we put forward a new type of aperiodic monotile that does not require a reflection, and has rules that only apply to adjacent tiles. We start with a hexagonal tile satisfying Socolar and Taylor's first local rule, and add a rule that only allows finite patches of tiles to connect along a dendrite. Our second rule is motivated by the proposed growth of certain quasicrystals \cite{IKG,ZULPDH}. Interestingly, a consequence of not requiring a reflected copy of our monotile is that the first rule can be enforced by shape alone, while this does not hold for the Socolar--Taylor monotile \cite[p.22]{ST2}. Although we are comparing our monotile with Socolar and Taylor's construction, these two monotiles are different in character. The Socolar--Taylor monotile is defined using local rules, whilst our monotile is defined by pairing a local rule and a dendritic rule. Indeed, our dendrite rule is local in the sense of building tilings, but is not local as a rule on tilings. So the monotile we present here is not an einstein in the technical sense, but rather a variant that requires tilings to be constructed starting from a seed tile. Three representations of our monotile appear in Figure \ref{the tile}, and each of these representations must satisfy the local rule \textbf{R1} and the dendrite rule \textbf{R2} outlined below.

\smallskip

Before introducing our monotile, we briefly define the terminology used in the paper. A {\em tiling} is a covering of the plane by closed topological discs, called tiles, that only intersect on their boundaries. A \emph{patch} is a finite connected collection of tiles that only intersect on their boundary. The building blocks of a tiling are the \emph{prototiles}: a finite set of tiles with the property that every tile is a direct isometry (an orientation preserving isometry) of a prototile. If the prototile set consists of a single tile, or a single tile and its reflection, we call it a \emph{monotile}. A tiling is said to be {\em nonperiodic} if it lacks any translational periodicity, and a set of prototiles is called \emph{aperiodic} if it can only form nonperiodic tilings.

Our monotile has two distinct features: a disconnected set of three curves that meet tile edges off-centre, and a connected tree that meets itself whenever two edges with a tree intersect. Once a single tile has been placed, a direct isometry of our monotile can be added to the plane provided the resulting collection of tiles is a patch, and
\begin{itemize}
\item[\textbf{R1}:] the black off-centre lines and curves must be continuous across tiles (c.f. \cite[\textbf{R1}]{ST}) and
\item[\textbf{R2}:] the new tile's red tree continuously connects with at least one tree of an adjacent tile.
\end{itemize}
We note that \textbf{R1} can be realised by shape alone, represented as puzzle like edge contours, while \textbf{R2} can be represented by magnetic dipole-dipole coupling. These representations appear in Figure \ref{the tile}.

In what follows, we will refer to the off-centre decorations that determine \textbf{R1} as R1-curves. These combine to form R1-triangles. Similarly, we will refer to the decorations that determine \textbf{R2} as R2-trees. We say that a tiling $T$ satisfies rule $R$ if every patch in $T$ is contained in a patch that can be constructed following rule $R$. Therefore, a tiling $T$ satisfies \textbf{R2} if and only if the union of R2-trees in $T$ is connected. Two legal patches satisfying \textbf{R1} and \textbf{R2} appear in Figure \ref{legal patch}. From this point forward we will use the representation from the left hand side of Figure \ref{the tile}, since we use R1-triangles heavily in the arguments that follow.

\begin{figure}[ht]
\[
\includegraphics[width=0.4\textwidth]{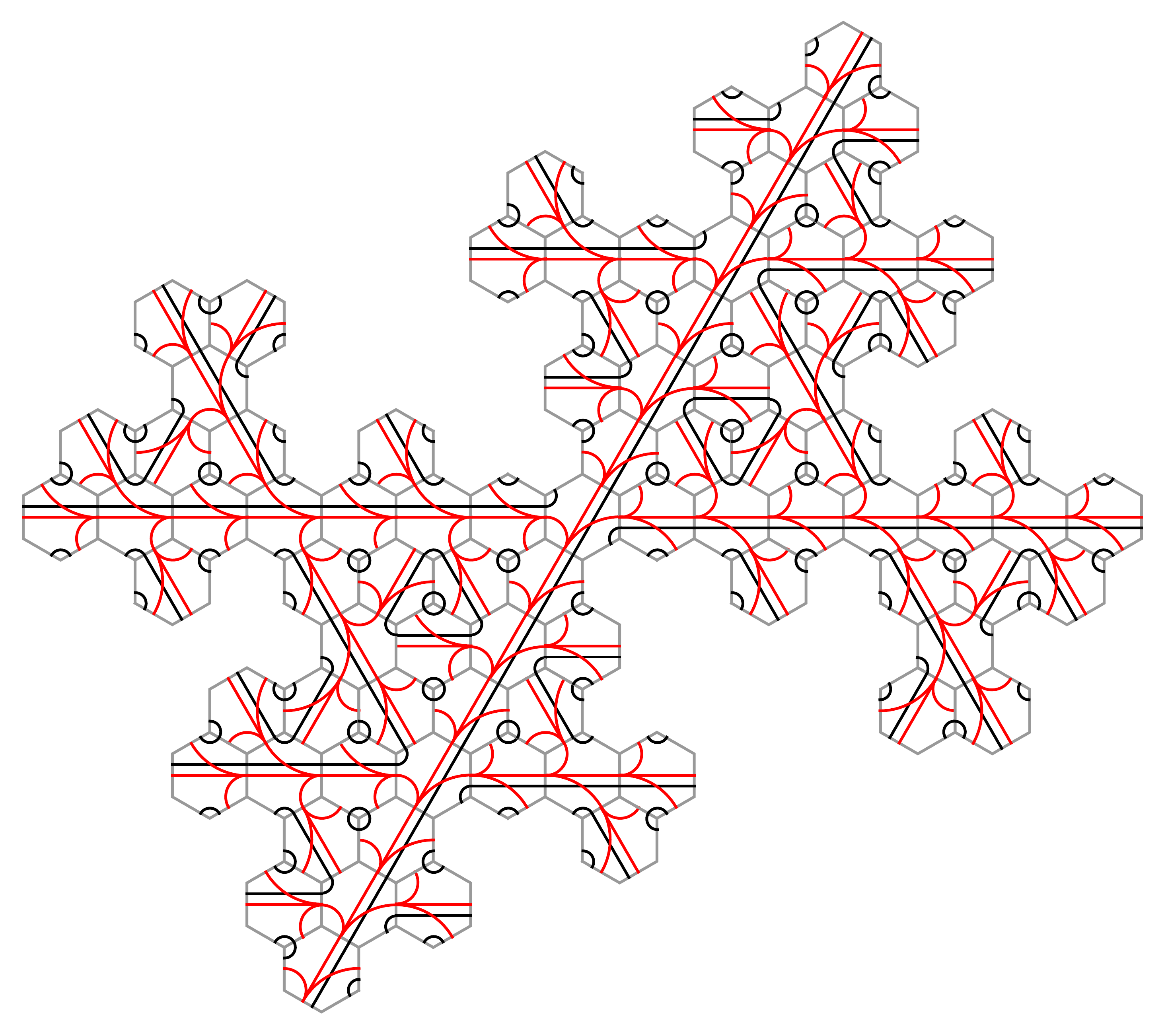} \qquad 
\includegraphics[width=0.4\textwidth]{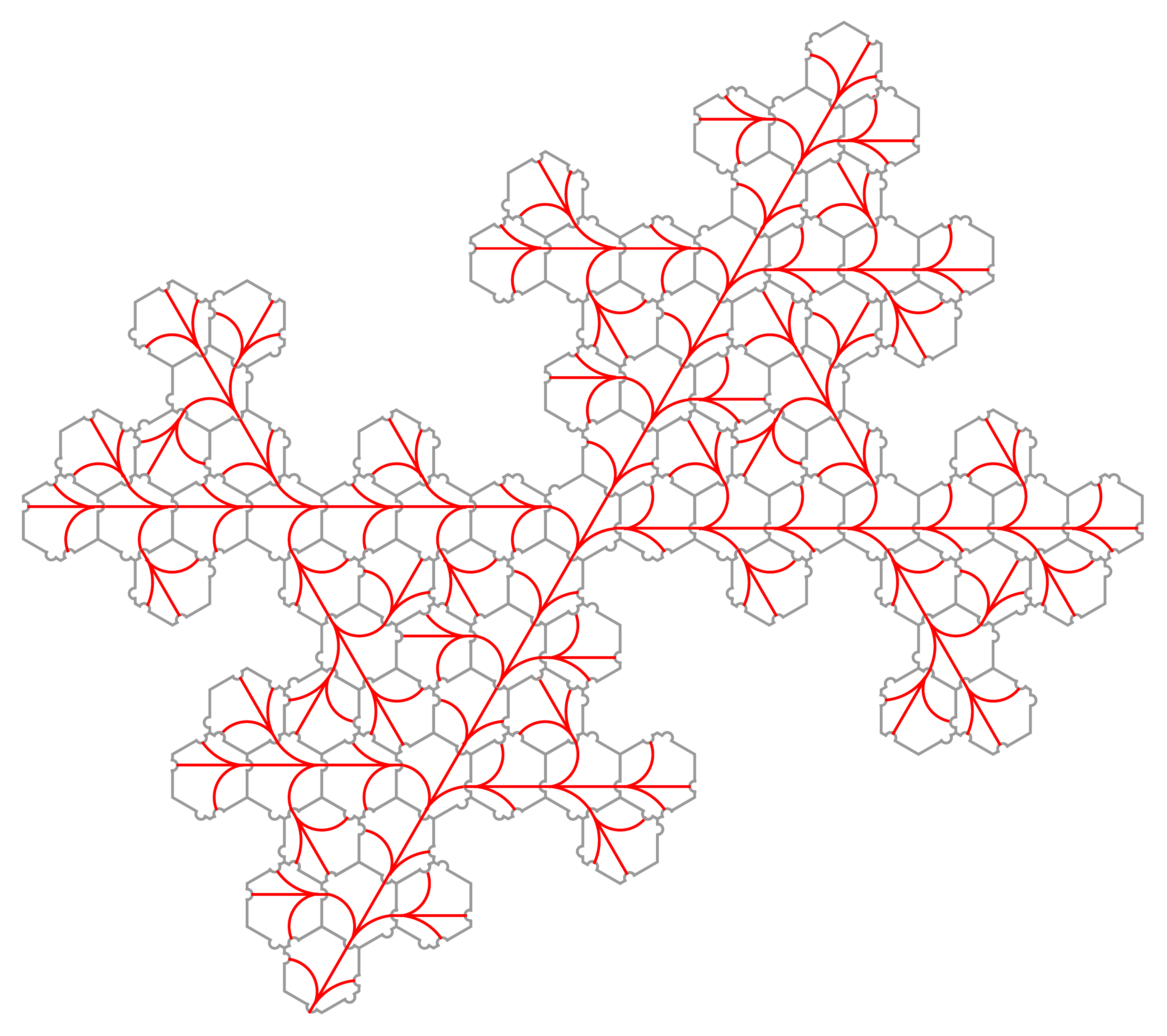}
\]
\[
\includegraphics[width=0.4\textwidth]{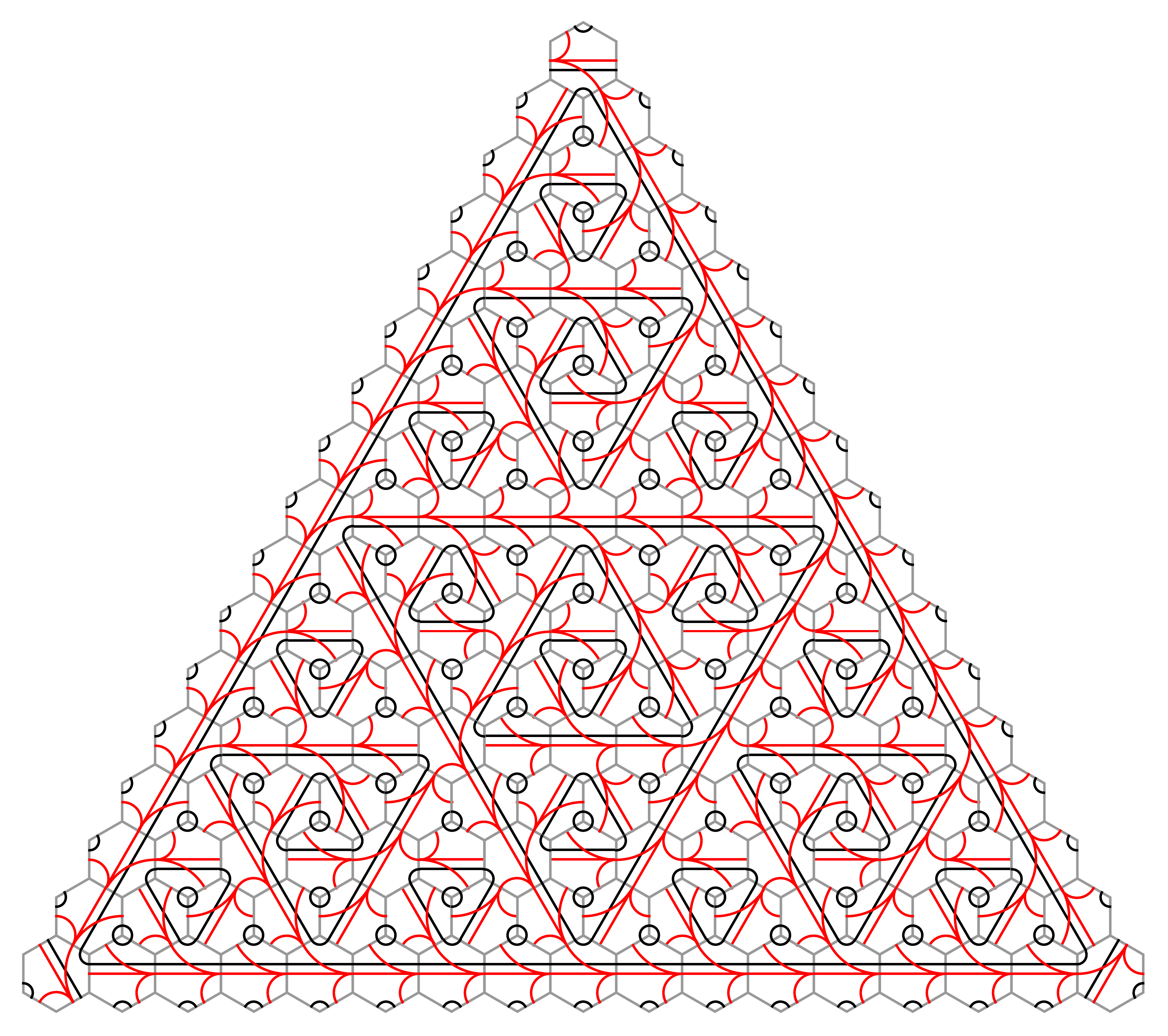} \qquad 
\includegraphics[width=0.4\textwidth]{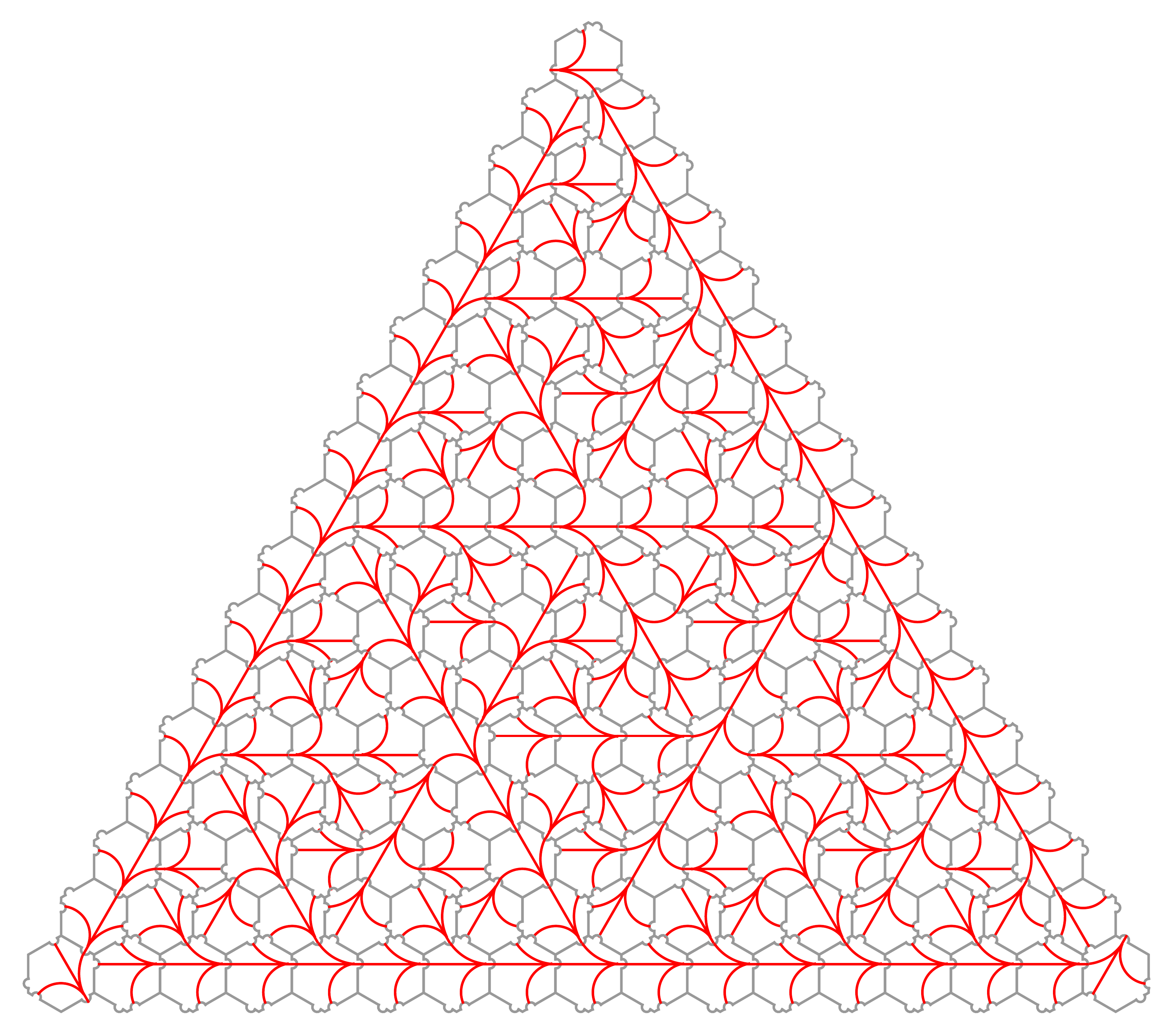}
\]
\caption{Two sets of equivalent legal patches. The patches on the top can be constructed directly using \textbf{R1} and \textbf{R2}, while the patches on the bottom cannot, but are subsets of legal patches}
\label{legal patch}
\end{figure}

The goal of the paper is to prove the following theorem.

\begin{Thm}\label{main result monotile}
The monotile in Figure \ref{the tile} is aperiodic. That is, there is a tiling of the plane using only direct isometries of the monotile that satisfies \textbf{R1} and \textbf{R2}, and any such tiling $T$ is nonperiodic.
\end{Thm}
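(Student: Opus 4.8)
The plan is to prove the two halves of aperiodicity separately: first that a full-plane tiling satisfying \textbf{R1} and \textbf{R2} exists, and second that every such tiling is nonperiodic. For existence I would build the tiling by an explicit growth/substitution process rather than trying to fill the plane greedily, since the dendrite rule \textbf{R2} makes greedy completion delicate: one must keep the union of R2-trees connected at every stage while leaving no gaps. Concretely, I would exhibit a substitution (inflation) on patches that respects both rules, showing that a fixed finite legal patch ``inflates'' to a larger legal patch whose R1-curves still close up into R1-triangles and whose R2-trees remain a single connected dendrite. Iterating from a seed tile yields an increasing chain of legal patches $P_0 \subset P_1 \subset \cdots$ exhausting the plane, and a standard compactness argument in the local tiling topology produces a plane tiling $T$. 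Both \textbf{R1} and \textbf{R2} pass to the limit because each is a condition on finite patches.

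For nonperiodicity, the strategy is to show that \textbf{R1} and \textbf{R2} together force a rigid hierarchical structure and then to run a de-substitution argument. The first rule \textbf{R1} forces the R1-curves to combine into R1-triangles exactly as in the Socolar--Taylor analysis, but on its own this still permits periodic arrangements. The role of \textbf{R2} is to force these triangles to \emph{nest}: because the union of R2-trees must be a single connected, acyclic set, I would argue that the trees inside a block of R1-triangles can only link up consistently when those triangles are organized into a unique family of nested supertriangles of side lengths $\lambda^n$ with $\lambda > 1$. In this way \textbf{R2} supplies, through local adjacency plus global connectivity of the dendrite, the hierarchy that Socolar and Taylor obtained from their nonlocal second rule.

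Granting this hierarchy, nonperiodicity follows by a recognizability (de-substitution) argument. The nested family of R1-triangles decomposes $T$ into level-$n$ supertiles, and this decomposition is canonical in that it is determined by $T$ alone; applying the inverse substitution and rescaling therefore yields, for each $n$, a tiling $T_n$ by the same monotile that still satisfies \textbf{R1} and \textbf{R2}. Now suppose $T = T + v$ for some vector $v$. Canonicity makes the decomposition invariant under $v$, so $v/\lambda$ is a period of $T_1$, and inductively $v/\lambda^n$ is a period of $T_n$. Since each $T_n$ is tiled by a monotile of one fixed size, no nonzero vector shorter than its inradius can be a period; as $|v|/\lambda^n \to 0$ this is impossible unless $v = 0$. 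Hence every tiling satisfying \textbf{R1} and \textbf{R2} is nonperiodic, and combined with existence this proves the monotile is aperiodic.

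I expect the main obstacle to be the middle step: proving that the dendrite rule \textbf{R2} genuinely forces the nesting of R1-triangles, that is, establishing recognizability of the hierarchy. The difficulty is that \textbf{R2} is a \emph{global} connectivity constraint on the union of R2-trees, whereas \textbf{R1} is local, so one must show that the only way a connected acyclic dendrite can thread through an \textbf{R1}-legal tiling is the nested one. This is precisely where the novel dendritic mechanism replaces the nonlocal Socolar--Taylor rule, and carrying it out will require a careful combinatorial analysis of how R2-trees at each scale attach across triangle boundaries.
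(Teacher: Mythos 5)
Your plan contains a genuine gap, and it sits exactly where you predicted the main obstacle would be: the claim that \textbf{R2} forces the R1-triangles to organise into a \emph{unique} nested hierarchy is false. The dendrite rule does not reproduce the full Socolar--Taylor hierarchy. What it actually rules out are two specific local configurations of three R1-triangles meeting corner-to-side (the paper's R2-cycles and R2-anticycles, Lemmas \ref{cyclic connected tree} and \ref{cyclic disconnected tree}), together with infinite R1-triangles (Lemma \ref{no infinite triangles}). The consequence is a dichotomy, not a single hierarchy: every legal tiling either has all R1-triangles meeting at a common tile of equal length (class $\CC_0$, the nested Socolar--Taylor-like case), or it contains a bi-infinite R1-line --- a fault line --- across which the triangle sizes on the two sides are completely independent (class $\CC_1$). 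The tilings in $\CC_1 \setminus \CC_0$ satisfy both rules (Proposition \ref{fault line} constructs them explicitly, with the dendrite connected through the fault line), yet they admit no canonical global supertile decomposition of the kind your recognizability argument requires: de-substitution is not well defined across the fault line, where the two half-plane hierarchies disagree. So the key lemma you propose to prove ("the only way a connected acyclic dendrite can thread through an \textbf{R1}-legal tiling is the nested one") is false as stated, and your nonperiodicity argument would stall precisely on these tilings. The paper instead proves nonperiodicity for each class separately: for $\CC_0$ by the absence of a largest translational period in the interlaced honeycombs, and for $\CC_1$ by the arbitrarily large R1-triangles forced to meet the fault line in interlaced periodic patterns; the cycle/anticycle lemmas are then used only to show $\CC = \CC_0 \cup \CC_1$.

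A secondary error: in your existence step you assert that \textbf{R2} "passes to the limit because it is a condition on finite patches." It is not --- \textbf{R2} is a global connectivity condition on the union of R2-trees, and it does \emph{not} survive limits in the local tiling topology (indeed, the continuous hull of the legal tilings contains tilings whose R2-trees split into up to three connected components). A compactness argument is therefore both unnecessary and unsound here. What does work, and is what the paper does, is an increasing exhaustion $P_0 \subset P_1 \subset \cdots$ of the plane by patches each of which has a \emph{connected} dendrite; the paper's recursive spiral construction (gluing four isometric copies of $P_{n-1}$ around a central connecting tile) supplies exactly this, and the union then satisfies \textbf{R2} because every finite patch lies in some $P_n$. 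Your inflation idea can likely be repaired along these lines, but the connectivity of the dendrite at every finite stage is the entire content of the construction, not a routine verification.
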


The proof of Theorem \ref{main result monotile} is essentially the contents of the rest of this paper. Since the proof is quite involved, we now provide a brief sketch. In Section \ref{tile properties}, we identify two classes, $\CC_0$ and $\CC_1$, of tilings satisfying \textbf{R1} and \textbf{R2}. We prove that both classes are non-empty and only contain nonperiodic tilings. To prove $\CC_0$ contains only nonperiodic tilings, we use a clever construction of Socolar and Taylor to build tilings satisfying \textbf{R1}, but not necessarily \textbf{R2}. We then build a tiling in $\CC_0$ by recursively constructing a spiral fixed point of R2-trees about the origin. We prove that $\CC_1$ is non-empty and that every tiling is nonperiodic by building all possible tilings in the class. The key to this construction is Lemma \ref{no infinite triangles}, which shows that no legal tiling can contain an infinite R1-triangle (two R1-rays connected by an R1-corner). In Section \ref{tile uniqueness}, we show that \textbf{R2} rules out two patterns of R2-trees, which we call R2-cycles and R2-anticycles. These patterns are exactly those that are formed between R1-triangles when they are arranged into a periodic lattice, as illustrated in Figure \ref{periodic pattern}. A nice consequence of ruling these patterns out is that the union of R2-trees in any tiling is always a connected \emph{tree}.  We are left to show that every tiling satisfying \textbf{R1} and \textbf{R2} must be in $ \CC_0 $ or $ \CC_1 $. This is achieved by proving that the absence of R2-cycles and R2-anticycles implies that every tiling fits into one of these two classes.

After proving Theorem \ref{main result monotile} we discuss the continuous hull of tilings arising from our monotile. We show that all tilings in the continuous hull satisfy \textbf{R1} and a weak version of \textbf{R2}.

Our use of dendrites in constructing the rules for our monotile was motivated by growth in quasicrystals. Several papers hypothesise that dendritic structures in molecules, particularly in soft-matter quasicrystals, are the mechanism that force nonperiodicity, see \cite{IKG,ZULPDH}. In \cite{ZTX}, magnetic dipole-dipole coupling of quasicrystals is mentioned, which can also be modelled by our monotile as indicated in Figure \ref{the tile}. According to the recent survey paper of Steurer \cite{Ste}, one of the most pressing questions in quasicrystal theory is understanding how they form, and when they grow periodically and quasiperiodically. The dendrite rules in this paper show that dendritic growth can lead to aperiodic tile sets.

\begin{Acknowledgements}
We are grateful to Michael Baake, Franz G\"{a}hler, Chaim Goodman-Strauss, Jamie Walton, and Stuart White for their helpful comments and mathematical insights.
\end{Acknowledgements}

\section{Classes of tilings arising from the monotile}\label{tile properties}

In this section, we consider two classes of tilings satisfing \textbf{R1} and \textbf{R2}. We show that each class is non-empty and only contains nonperiodic tilings. In the following section, we show that these classes exhaust the possible tilings that can be constructed from our monotile.

We begin with a closer look at the R1-triangles. Notice that the small R1-curves only occur at angle $\pi/3$, so any R1-triangle must be equilateral. We also observe that the R1-curves can only give rise to nested R1-triangles, an infinite R1-triangle (two infinite R1-rays connected by an R1-curve) or a bi-infinite R1-line. Let us denote the length of a finite R1-triangle by the number of tiles comprising the straight R1-line segment of any given side, as in Figure \ref{R1-triangles}.
\begin{figure}[ht]
\begin{center}
\begin{tikzpicture}
\node (tilea) at (-1,0) [label=below:{Length $0$}] {
\includegraphics[width=0.08\textwidth]{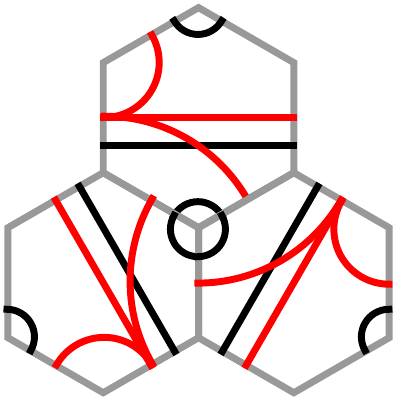}
};

\node (tileb) at (1.5,0) [label=below:{Length $1$}] {
\includegraphics[width=0.12\textwidth]{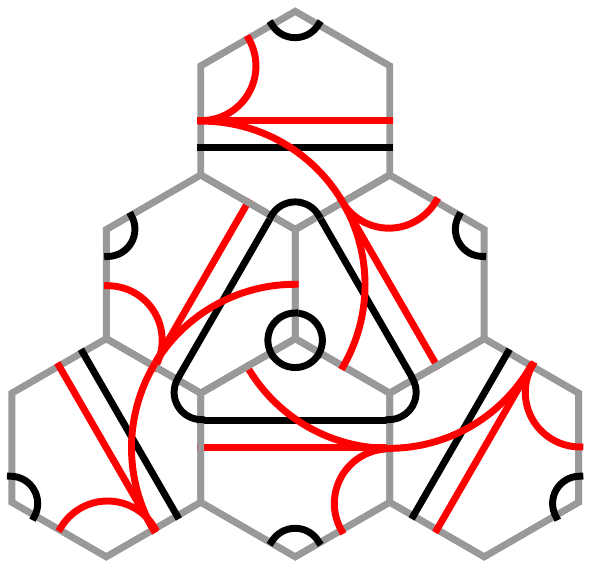}
};

\node (tilec) at (5,0) [label=below:{Length $3$}] {
\includegraphics[width=0.2\textwidth]{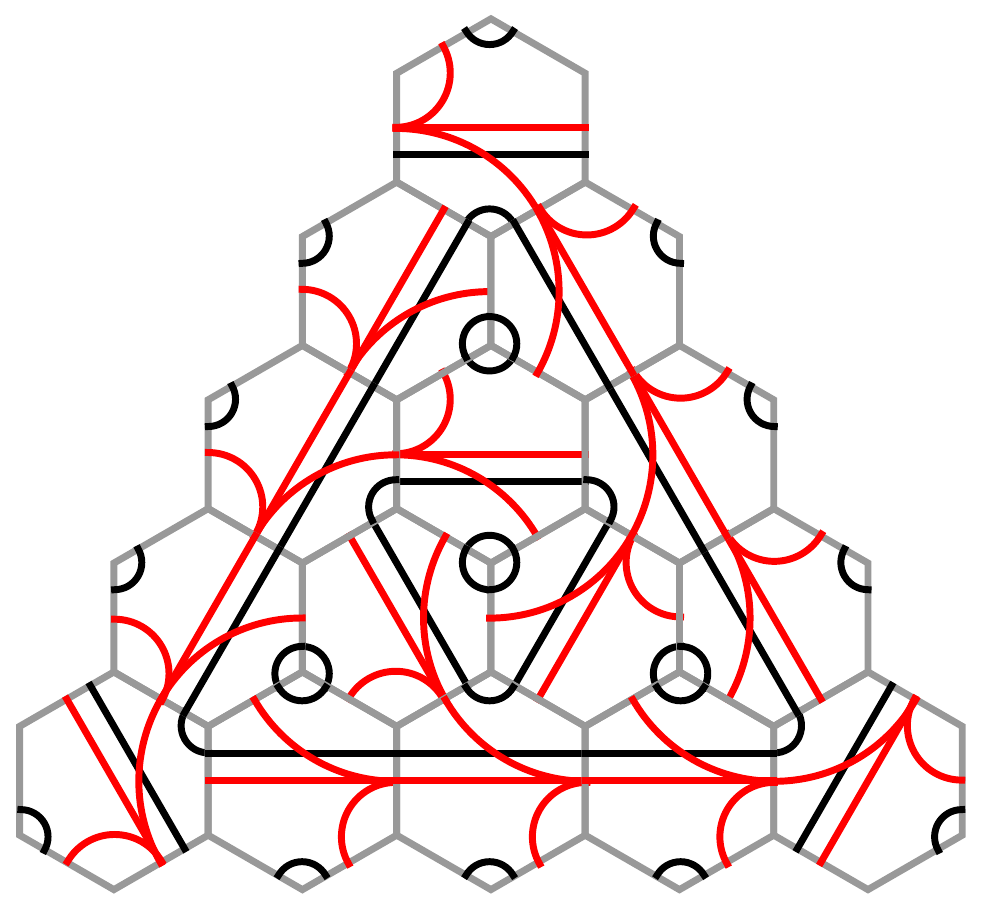}
};

\node (tiled) at (10,0) [label=below:{Length $7$}] {
\includegraphics[width=0.36\textwidth]{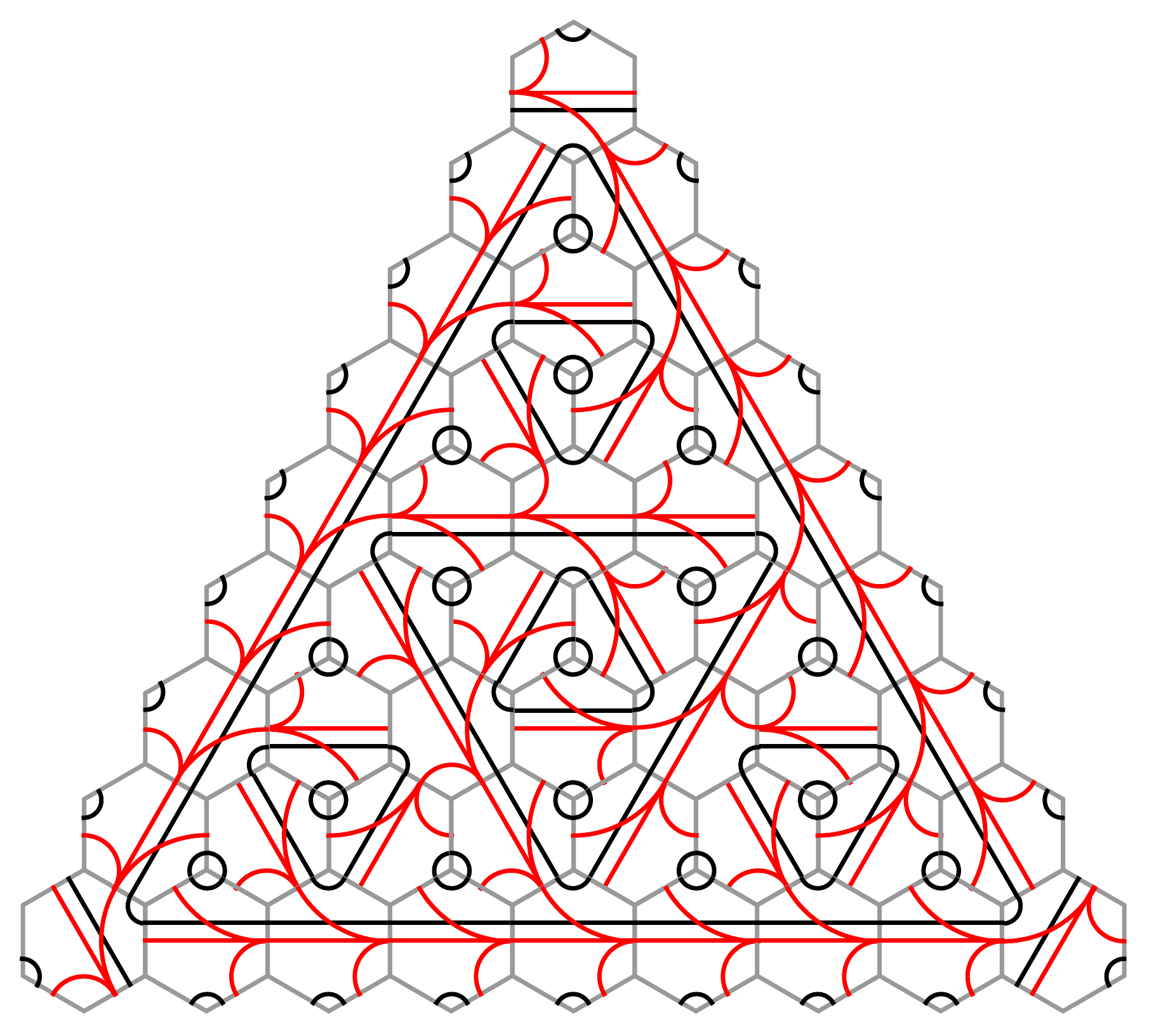}
};
\end{tikzpicture}
\end{center}
\caption{R1-triangles have length $2^n-1$ for $n=0,1,2,\ldots$.}
\label{R1-triangles}
\end{figure}

The following lemma is easily deduced from \textbf{R1} and the geometry of the R1-curves.

\begin{lemma}\label{triangle sizes}
Any nested R1-triangle has length $2^n -1$ for some $n=0,1,2,\ldots$.
\end{lemma}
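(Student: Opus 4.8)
The plan is to prove the equivalent statement that every nested R1-triangle has \emph{edge length} a power of two, where by edge length $E$ I mean the number of lattice steps between two consecutive corners along a side. Since a side with $\ell$ tiles in its straight R1-segment has its two corners $\ell+1$ steps apart, we have $E=\ell+1$, and the desired $\ell = 2^n-1$ is equivalent to $E=2^n$. Throughout I use that the R1-curves turn only through $\pi/3$ (so every R1-triangle is equilateral, as already noted) and that a corner of an R1-triangle is a local configuration seated at a vertex of the underlying hexagonal lattice, so the corners along one side are separated by an integer number $E$ of lattice steps.

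I would set up an induction by infinite descent on $E$. For the base case I would examine the R1-curves in the three tiles meeting at a single corner: because the curves meet the tile edges off-centre and bend only through $\pi/3$, there is a unique minimal way to close them into a triangle, namely the length-$0$ triangle of Figure \ref{R1-triangles}, which has no straight segment and hence edge length $E=1=2^0$. For the inductive step the essential geometric input is the following claim: in any nested R1-triangle with $E>1$, the R1-curves force the midpoint of each side to carry the corner of an oppositely oriented R1-triangle lying in the interior, exactly the central inverted triangle of a subdivision into four.

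Granting the claim, the lemma follows quickly. The midpoint of a side sits $E/2$ lattice steps from each of the two corners bounding that side; since it must be a genuine R1-corner, and R1-corners occur only at lattice vertices, $E/2$ must be an integer, so $E$ is even. Moreover, each half of a side, running from an outer corner to this midpoint corner, is now a full side of one of the three outer sub-triangles, each of which is again a nested R1-triangle of edge length $E/2$. By the inductive hypothesis $E/2 = 2^{n-1}$, whence $E=2^n$, completing the descent.

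The main obstacle is the midpoint-forcing claim, which is the only place where the precise geometry of the off-centre R1-curves enters, rather than just the $\pi/3$ angle condition. I expect to prove it by tracking, tile by tile along one side of the triangle, which side of centre each R1-curve meets the shared edges: the off-centre offset gives each straight R1-segment an orientation, and continuity across edges (\textbf{R1}) forces this orientation to reverse at a unique interior tile, where a transverse R1-curve must emerge and begin an inward-pointing corner. Verifying that this reversal occurs at the exact midpoint, that the emerging corner is oriented oppositely, and that the three resulting outer regions are themselves legal R1-triangles, is the routine but attention-demanding case analysis on which the argument rests; the remaining bookkeeping (the base case and solving $E\mapsto E/2$ to get $E=2^n$) is then immediate.
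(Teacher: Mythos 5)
The paper itself offers no proof of Lemma \ref{triangle sizes} (it is simply asserted to be ``easily deduced'' from \textbf{R1} and the geometry of the R1-curves), so your attempt to supply an actual argument is welcome, and your descent skeleton --- midpoint corner forces $E$ even, then recurse at half scale --- is the right shape. But your inductive step, as stated, rests on a false claim: the three ``outer sub-triangles'' of edge length $E/2$ are not R1-triangles. Each tile's R1-decoration is a \emph{disjoint} union of one straight chord and two small corner curves, and a side of an R1-triangle is a single unbroken chord running between two corner curves. Hence the big triangle's side passes straight through its midpoint tile without turning; that midpoint tile's inward corner curve is a separate connected component of the decoration, attached only to the central inverted triangle, not to the side chord. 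The region spanned by one corner of the big triangle and the two adjacent side-midpoints is therefore bounded by pieces of two \emph{different} R1-curves (halves of two sides of the big triangle plus one side of the inverted triangle), so it is not an R1-triangle and the inductive hypothesis cannot be applied to it. (In the true nested structure, the R1-triangles sitting in those corner regions have edge length $E/4$, e.g.\ the length-$1$ triangles inside a length-$7$ triangle, and their corners are not at the corners of the big triangle.) This defect is repairable: granted your midpoint claim, the central inverted triangle \emph{is} an honest R1-triangle of edge length $E/2$, and the descent can be run on it alone.

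The more serious gap is the midpoint-forcing claim itself, which carries the whole content of the lemma and whose sketched mechanism contradicts the geometry. Along a straight side, \textbf{R1} forces the chords of consecutive tiles to be collinear with the same off-centre displacement; since only direct isometries are permitted, all side tiles are translates of one another, and \emph{every} one of them (not a unique interior one) carries an inward-pointing corner curve --- nothing ``reverses'' as you walk along the side, so no tile-by-tile bookkeeping of edge offsets along that side can single out the midpoint. What distinguishes the midpoint is global: each of the $\ell$ inward curves is the apex of an inverted R1-triangle inside the big triangle; these are pairwise non-crossing (each tile has only one chord); their sizes are forced into a ruler pattern propagating inward from each corner (the apex adjacent to a corner must close into a length-$0$ triangle, which forces the chord direction of the diagonal interior neighbour, which forces the next apex to have length exactly $1$, and so on), and it is the consistency of the two ruler patterns emanating from the two ends of the side that forces $\ell+1$ to be a power of $2$. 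Making that forcing rigorous is an induction over the entire interior structure --- essentially the lemma itself --- so as written your proposal defers all of the real difficulty into a claim whose proposed justification would fail, while the part you do carry out in detail is only the easy bookkeeping.
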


\begin{definition}\label{classes}
Let $\CC$ denote the collection of tilings whose prototile set is the monotile from Figure \ref{the tile} satisfying \textbf{R1} and \textbf{R2}. Consider the subcollections of $\CC$ defined by the properties:
\begin{description}
\item[$\CC_0$] if the corners of a pair of R1-triangles meet at a common tile in $T$, then these R1-triangles have the same length;
\item[$\CC_1$] $T$ contains a bi-infinite R1-line.
\end{description}
\end{definition}

We first consider the collection $\CC_0$. Let us introduce a convention that will be used in the proof of the following proposition. Define $R_\theta$ to be the rotation operator that rotates a patch counterclockwise around the origin by $\theta$.

\begin{prop}\label{existence}
The collection $\CC_0$ is non-empty and only contains nonperiodic tilings.
\end{prop}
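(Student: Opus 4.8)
The plan is to establish the two assertions of Proposition \ref{existence} separately: first nonemptiness by an explicit construction, then nonperiodicity as a structural consequence of the defining property of $\CC_0$. For nonemptiness, I would build a tiling by a recursive, rotationally symmetric procedure anchored at the origin. The idea is to place a seed tile, then grow outward in stages, at each stage surrounding the current patch with R1-triangles whose corners meet only triangles of equal length (so as to remain inside $\CC_0$), while simultaneously threading the R2-trees so that the dendrite rule \textbf{R2} is satisfied at every adjacency. By Lemma \ref{triangle sizes} the admissible triangle lengths are exactly $2^n-1$, so the natural construction nests triangles of lengths $0,1,3,7,\dots$ in successive shells, doubling the scale at each step. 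The operator $R_\theta$ introduced just before the statement signals that the intended tiling should be a \emph{spiral fixed point}: one arranges the R2-trees to spiral around the origin so that the whole configuration is invariant (or covariant up to the recursive inflation) under a rotation by some angle $\theta$, which both pins down a canonical tiling and makes the verification of \textbf{R2}-connectedness systematic. The main content here is checking that the construction never stalls, i.e.\ that at each stage the partial patch can legally be extended to cover an arbitrarily large disc while keeping \textbf{R1}, \textbf{R2}, and the $\CC_0$ condition intact; this is the step I expect to require the most care, and it is where the explicit geometry of the R1-curves and the R2-tree on the monotile must be used.

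For nonperiodicity, I would argue by contradiction using the $\CC_0$ condition as the key rigidity. Suppose $T\in\CC_0$ admits a nonzero translational period $v$, so $T+v=T$. The strategy is to show that a translation symmetry is incompatible with the requirement that R1-triangles sharing a corner have equal length. Concretely, in any tiling satisfying \textbf{R1} the tiles organise into nested R1-triangles (by the observations preceding Lemma \ref{triangle sizes}, the only alternatives being infinite triangles or bi-infinite lines, which I would rule out or handle as degenerate cases for tilings in $\CC_0$). A translation $v$ must carry R1-triangles to R1-triangles of the same length, and it must preserve the corner-adjacency relation. But in a configuration where every corner-meeting pair has equal length, the lengths are forced to increase without bound as one moves outward through successive nested shells: a shell of triangles of length $2^n-1$ can only be surrounded, consistently with the $\CC_0$ rule, by triangles of strictly larger length $2^{n+1}-1$. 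A nonzero translation, being an isometry, cannot map a triangle of one length to a triangle of a strictly larger length, yet any large enough translate of a bounded patch lands in a region dominated by larger triangles. This unboundedness of triangle sizes is the obstruction to periodicity.

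The crux is therefore to prove the \emph{unboundedness / hierarchical} statement: in any tiling in $\CC_0$ the R1-triangle lengths are unbounded and, more strongly, the tiling carries a genuine hierarchical (inflation) structure in which the length scale strictly increases as one passes to larger triangles. I would extract this from the combinatorics of how R1-triangles can fit together: given the corner-matching constraint of $\CC_0$ and the allowed lengths $2^n-1$ from Lemma \ref{triangle sizes}, a finite patch of triangles of a fixed length cannot tile arbitrarily large regions on its own, so larger triangles must appear, and the only way corners match is for each triangle to sit inside a next-larger triangle. I expect this hierarchical step to be the true heart of the proposition, since once the triangle sizes are known to grow without bound the periodicity contradiction is immediate: a period vector $v$ would have to fix the length of every triangle it translates, contradicting the appearance of arbitrarily large triangles in every direction together with the fact that translation preserves length. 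I would close by noting that the construction in the nonemptiness part already exhibits this growing hierarchy explicitly, so the same analysis simultaneously confirms $T\in\CC_0$ for the constructed tiling and yields its nonperiodicity.
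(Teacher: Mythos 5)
Your overall strategy mirrors the paper's: a recursive spiral construction anchored at the origin for non-emptiness, and unbounded R1-triangle sizes as the obstruction to periodicity. But the nonperiodicity half has a genuine gap, in two places. First, the step you yourself identify as the heart --- that in every tiling of $\CC_0$ the R1-triangle lengths are unbounded and organised hierarchically --- is only asserted (``a finite patch of triangles of a fixed length cannot tile arbitrarily large regions on its own''). This claim is essentially Theorem 1 of Socolar--Taylor, and it is precisely what the paper avoids re-proving: the defining condition of $\CC_0$ (corner-meeting triangles have equal length) is exactly the condition Socolar and Taylor deduce from their second local rule, so the paper simply cites their classification, which yields interlaced honeycomb lattices of R1-triangles of length $2^n-1$ for every $n$. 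Without either citing this result or supplying the induction across scales, your unboundedness claim is unproved, and the whole contradiction rests on it.

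Second, even granting unboundedness, your concluding inference is incorrect as stated. You argue that ``any large enough translate of a bounded patch lands in a region dominated by larger triangles,'' i.e.\ that triangle sizes grow as one moves outward. That picture is false for tilings in $\CC_0$: triangles of every size $2^n-1$ occur in a honeycomb lattice spread uniformly over the whole plane, so small triangles appear everywhere, and a translation carrying every triangle to a same-size triangle is perfectly consistent with arbitrarily large triangles existing (each one simply maps to another of its own size elsewhere). The spiral in the paper's construction is a feature of the R2-trees, not of radially increasing triangle sizes. The correct deduction from unboundedness is different: a period $v$ must preserve the lattice of size-$(2^n-1)$ triangles for every $n$, and the minimal translation of that lattice grows like $2^n$, so no nonzero $v$ survives; equivalently, since R1-curves cannot cross, two distinct equal-size R1-triangles cannot have overlapping interiors, so any period must exceed the inradius of every triangle, impossible once sizes are unbounded. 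Your non-emptiness half is the same spiral idea as the paper's but remains a skeleton: the paper pins it down with an explicit recursion gluing four copies of $P_{n-1}$ through a single connecting tile, which is exactly what makes your ``never stalls'' verification, and the check that the limit tiling lies in $\CC_0$, go through. (A further small slip: you propose to rule out bi-infinite R1-lines for tilings in $\CC_0$, but they cannot be ruled out --- the paper notes that $\CC_0 \cap \CC_1 \neq \emptyset$.)
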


\begin{figure}[ht]
\[
\begin{tikzpicture}
\node (grid_0) at (0,0) [label=below:{zeroth R1-grid}] {\includegraphics[width=4cm,angle=0]{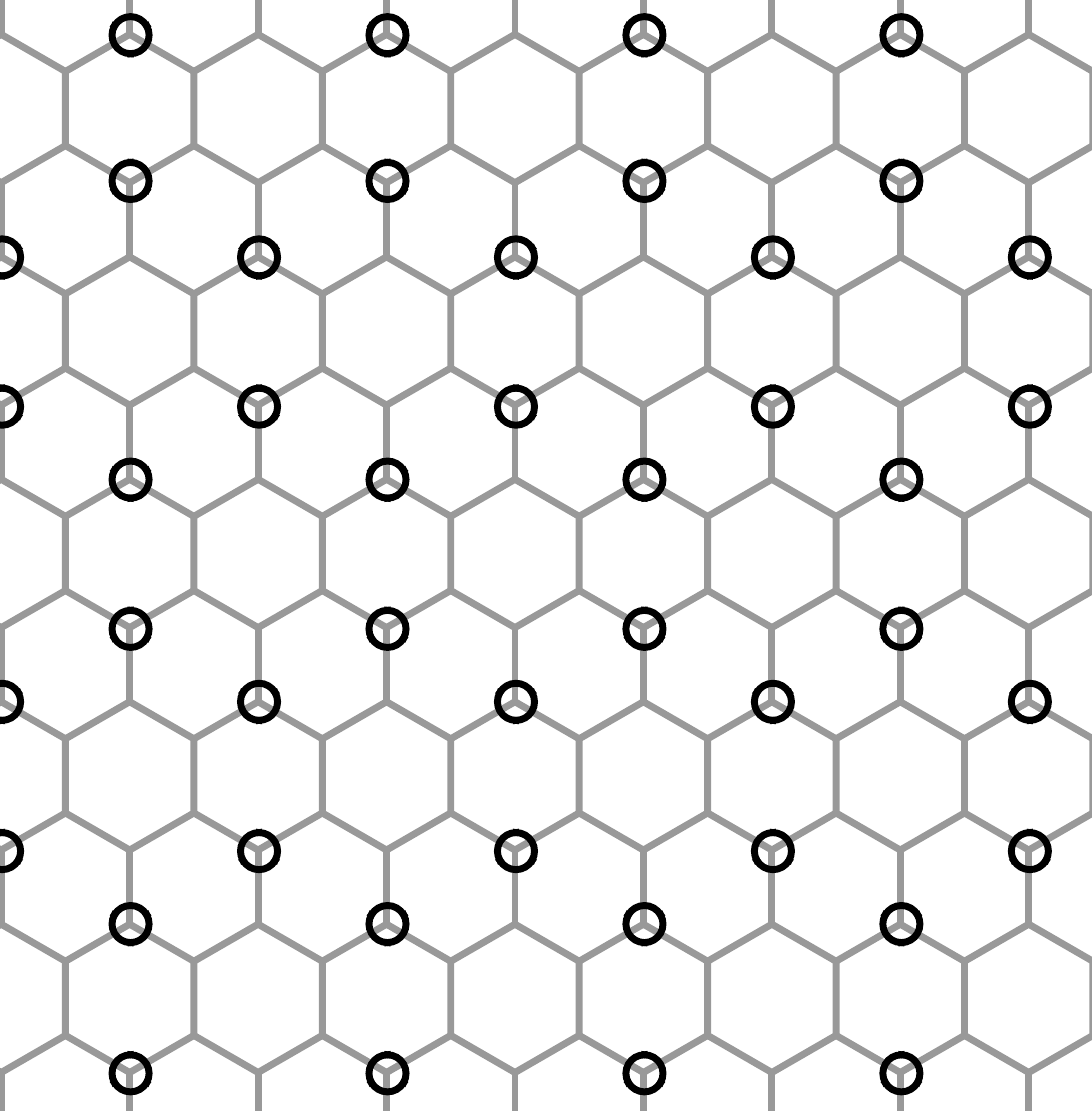}};
\node (grid_1) at (5.5,0) [label=below:{first R1-grid}] {\includegraphics[width=4cm,angle=0]{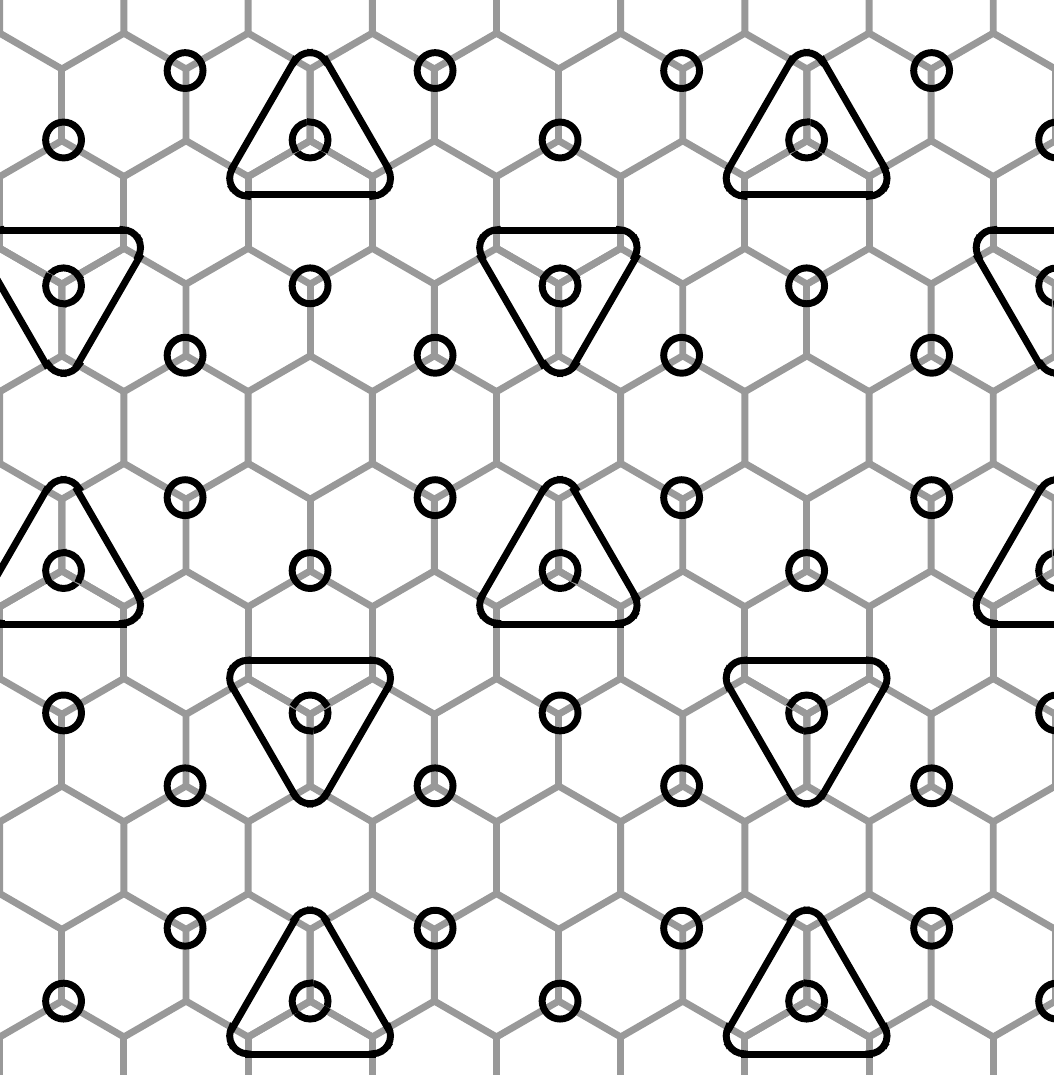}}
	edge[<-] (grid_0);	
\node (grid_n) at (11,0) [label=below:{$n^{th}$ R1-grid}] {\includegraphics[width=4cm,angle=0]{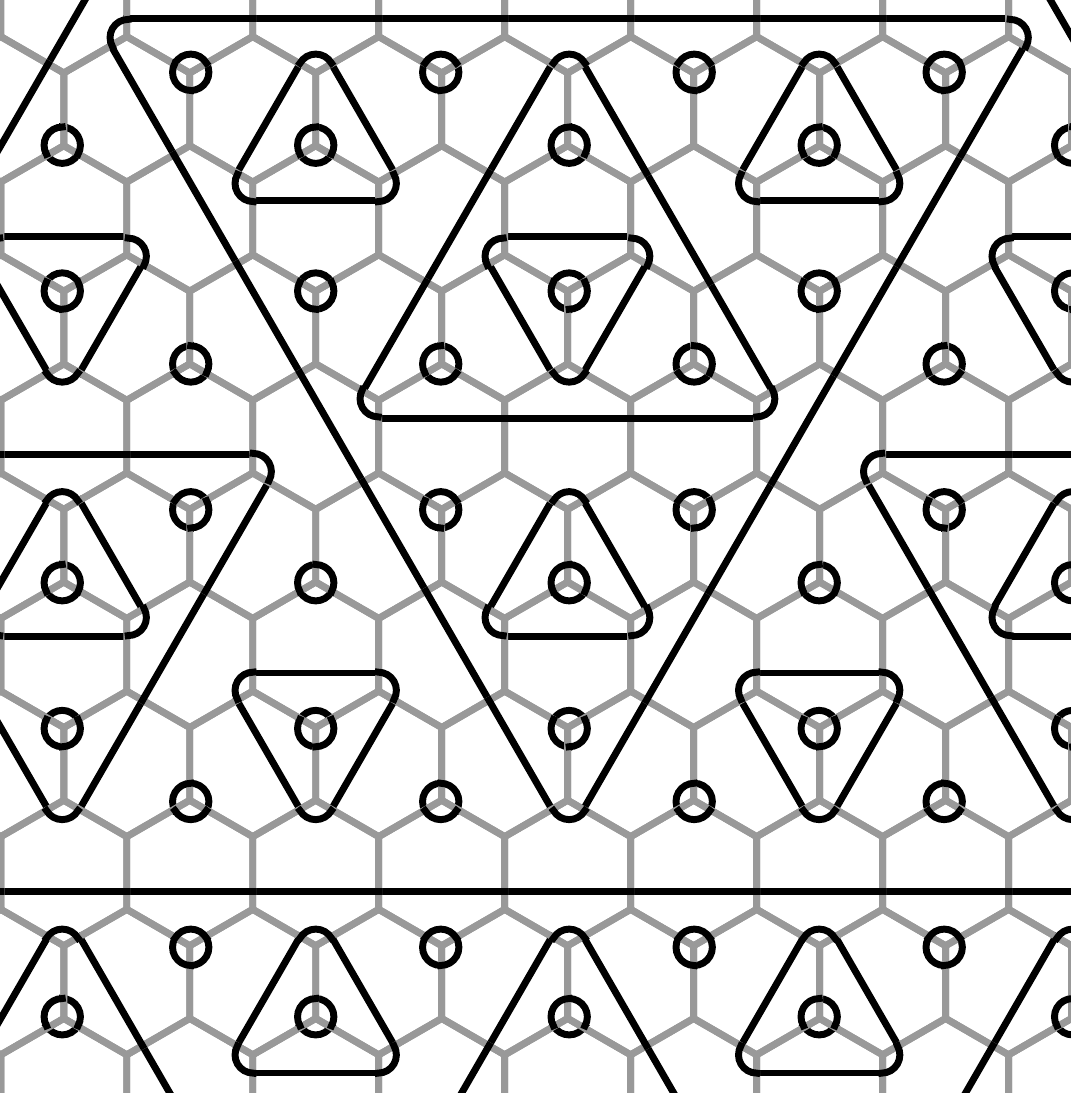}}
	edge[<-,dashed] (grid_1);
\end{tikzpicture}
\]
\caption{The interlaced honeycombs of R1-triangles in the Socolar--Taylor construction}
\label{grids}
\end{figure}

\begin{proof}
To see that $ \CC_0 $ contains only nonperiodic tilings, we appeal to a construction of Socolar and Taylor \cite[Theorem 1]{ST}, which we now summarise. Tilings satisfying \textbf{R1} are produced by adding markings to tiles that form successively larger hexagonal grids of interlaced R1-triangles, see Figure \ref{grids} for a pictorial representation of their construction. To force these honeycomb lattices of length $2^n-1$ R1-triangles, Socolar and Taylor use their second local rule to deduce the condition that all R1-triangles whose corners meet at a common tile have the same size. Since we have restricted ourselves to tilings in $\CC_0$, this condition is one of our hypotheses. The honeycomb lattices of R1-triangles have no largest translational periodicity constant, so that all of the infinite tilings produced must be nonperiodic.

\begin{figure}[ht]
\begin{center}
\scalebox{0.98}{
\begin{tikzpicture}
\node (tilea) at (-1,0) {
\includegraphics[scale=0.33]{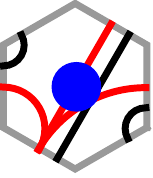}
};
\node at (-1,-0.7) {$P_0$};

\node (tileb) at (0.5,0) {
\includegraphics[scale=0.33]{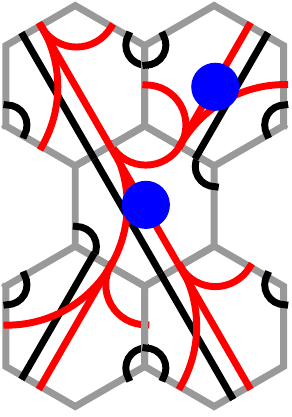}
};
\node at (0.5,-1.0) {$P_1$};

\node (tilec) at (3,0) {
\includegraphics[scale=0.33]{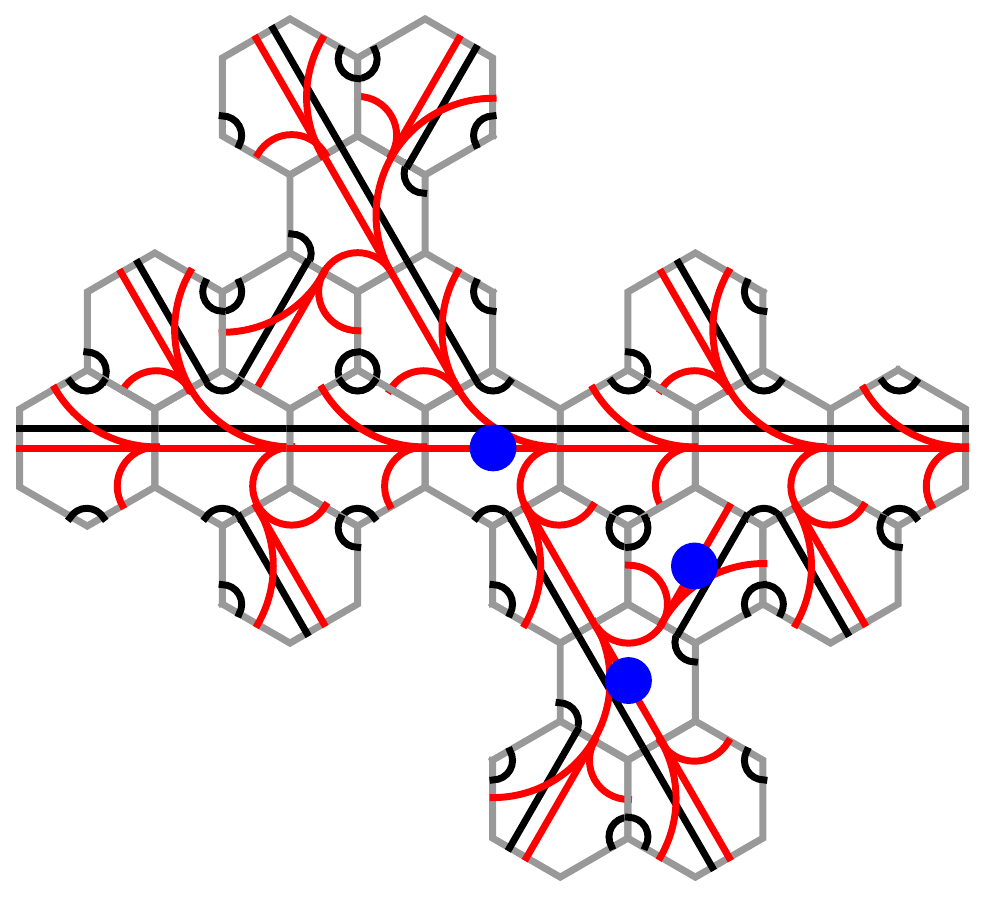}
};
\node at (2.5,-1.0) {$P_2$};

\node (tiled) at (1.5,-5.25) {
\includegraphics[scale=0.33]{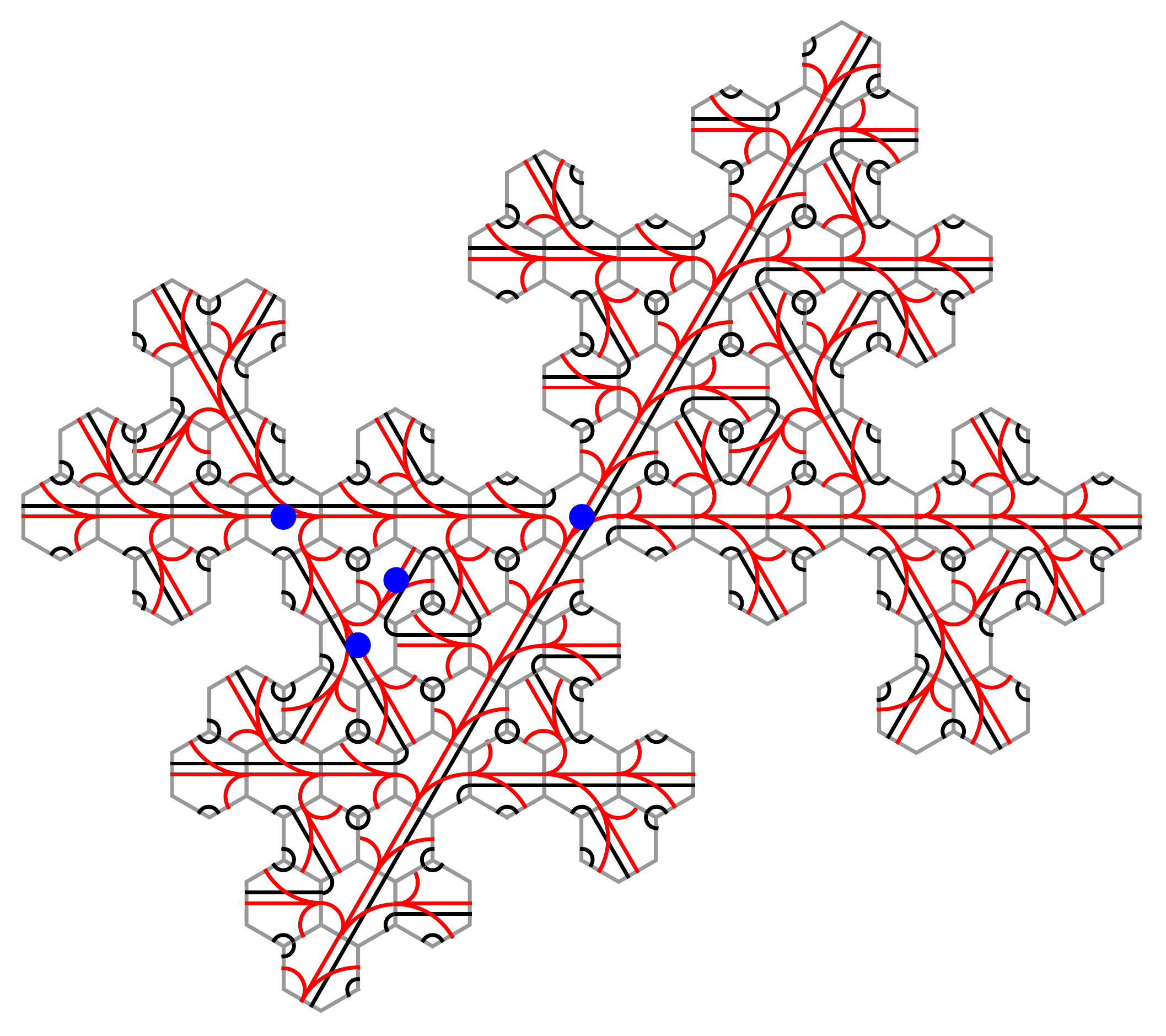}
};
\node at (2.5,-6.4) {$P_3$};

\node (tiled) at (9,-5) {
\includegraphics[scale=0.33]{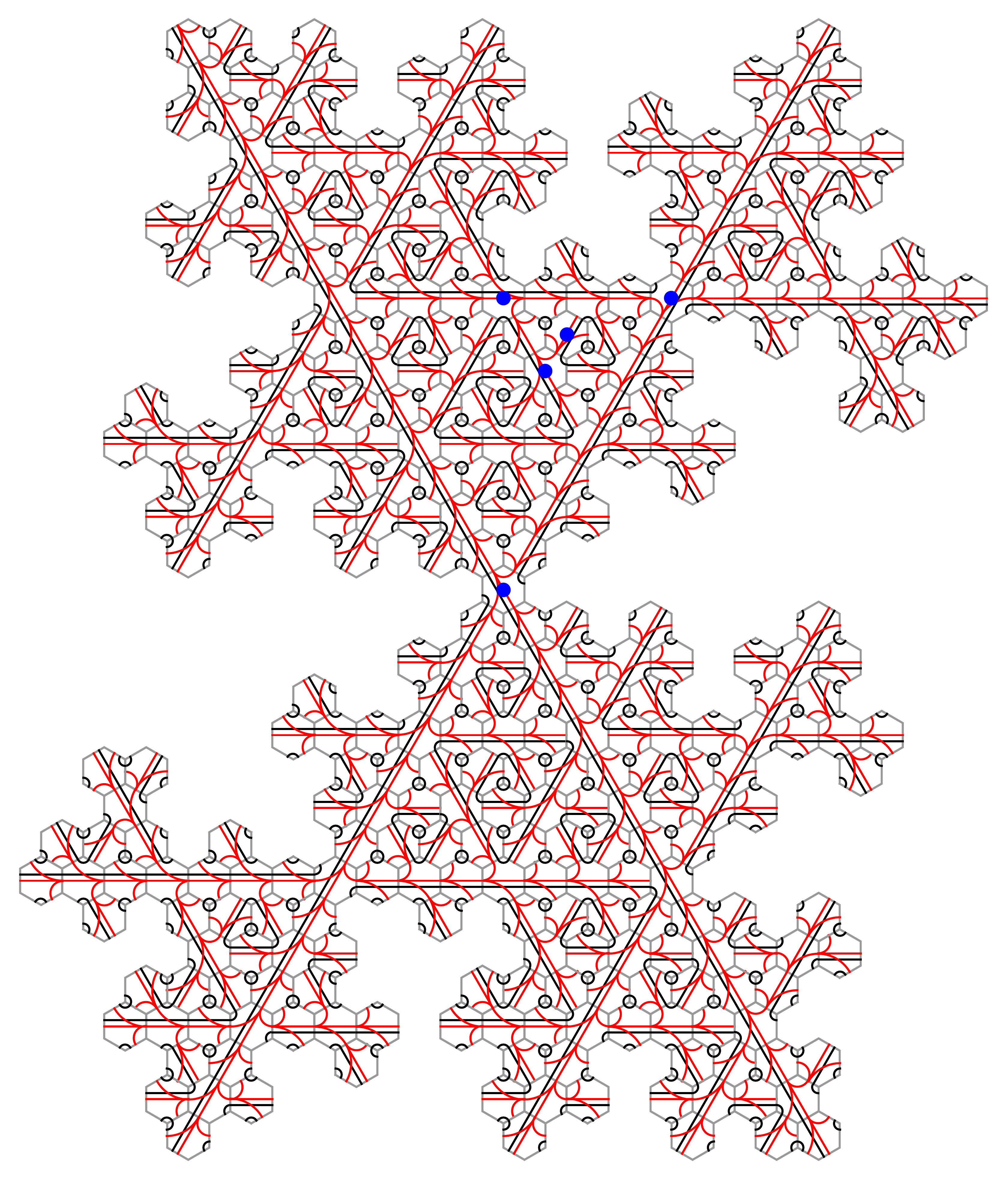}
};
\node at (8,-11.0) {$P_4$};
\end{tikzpicture}}
\end{center}
\caption{Constructing an infinite tiling by piecing together R2-trees}
\label{constructing tiling}
\end{figure}

\begin{figure}
\[
\includegraphics[width=0.9\textwidth]{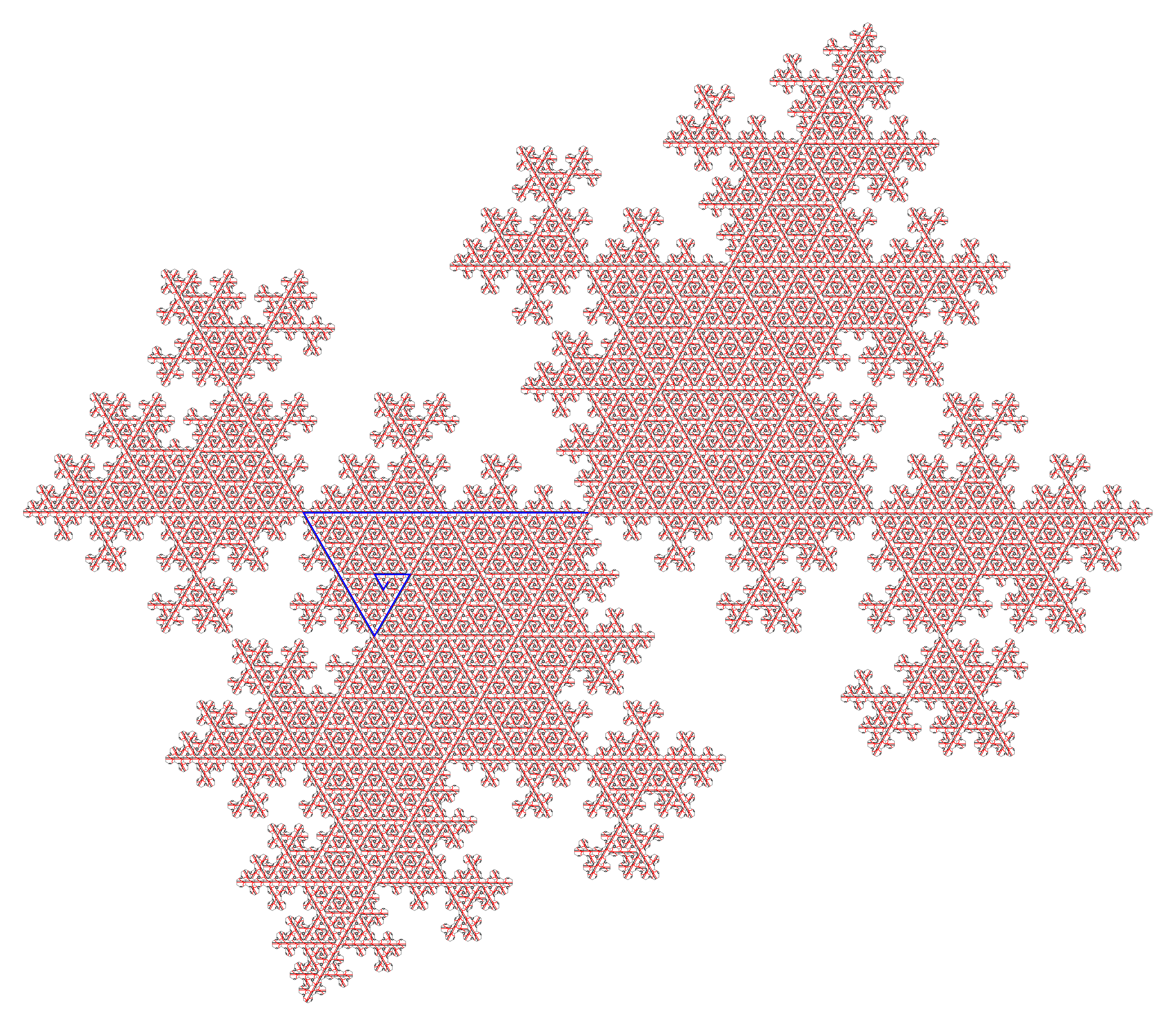}
\]
\caption{Part of the patch $P_6$ with the spiral branch of the R2-tree highlighted}
\label{spiral}
\end{figure}

We are left to show that $\CC_0$ is non-empty. To construct a tiling $T_0 \in \CC_0$, we recursively define patches $P_n$ with three key properties:
\begin{enumerate}
	\item $P_n$ satisfies \textbf{R1} and \textbf{R2},
	\item the patch $P_n$ is a strict subset of $P_{n+1}$, and
	\item as $n$ increases, the patch around the origin in $P_n$ increases exponentially.
\end{enumerate}
The union of patches $P_n$ is a tiling of the plane, see \eqref{constructing T0 formula} below.

To define the recursive algorithm, we carefully look at how patches grow with respect to \textbf{R2}. The first few steps of our algorithm appear in Figure \ref{constructing tiling}, which should help decipher the recursive definition below. Essentially, the patch $P_n$ is constructed from $P_{n-1}$ by gluing $P_{n-1}$ and three direct isometries of $P_{n-1}$ together by a single connecting tile in the centre of the new patch $P_n$. These central tiles have centre at a point $x_n$ (explicitly described below), and each such $x_n$ is marked by a dot in Figure \ref{constructing tiling}, which helps to see $P_n$ in $P_{n+1}$.

Let $P_0$ be our monotile in exactly the orientation appearing in Figure \ref{the tile}, placed with its centre on the origin. Using polar coordinates $(r,\theta)$, the recursive formula for patch $P_n$ is defined by points
\[
x_0:= (0,0) \quad \text{ and } \quad x_n := \sum_{i=1}^n \left(2^{i-1},4 i \pi/3 \right),
\]
along with patches
\begin{align}
\label{recursive patch}
P_n:= R_{\frac{4n\pi}{3}}\left(P_{0} + x_n\right) &\bigsqcup P_{n-1} \bigsqcup R_{\frac{4\pi}{3}}\left(P_{n-1} - x_{n-1}\right)+x_n+\left(2^{n-1},4n\pi/3-2\pi/3\right)\\
\nonumber
&\bigsqcup R_{\pi}\left(P_{n-1} - x_{n-1}\right)+x_n+\left(2^{n-1},4n\pi/3\right)\\
\nonumber
&\bigsqcup R_{\frac{4\pi}{3}}\left(P_{n-1} - x_{n-1}\right)+x_n+\left(2^{n-1},4n\pi/3+\pi/3\right).
\end{align}
Note that the points $x_n$ appear at each corner of the superimposed spiral in Figure \ref{spiral}.

The method we have used to build $ P_n $ ensures that both \textbf{R1} and \textbf{R2} are satisfied. Moreover, the patches $P_n$ overlap where they intersect, and are space filling in a spiral pattern that successively connects the points $x_n$ around the origin, see the Figure \ref{spiral}. Thus, the union 
\begin{equation}\label{constructing T0 formula}
T_0 := \bigcup_{n=0} P_n
\end{equation}
is a tiling of the plane satisfying both \textbf{R1} and \textbf{R2}. 

To finish the proof, we show that $T_0$ satisfies the defining condition of $\CC_0$. Observe that each patch $P_n$ has two R2-tree straight segments of length $2^n-1$ extending from the tile containing the point $x_n$ at angles $\frac{n\pi}{3}$ and $\frac{(n+3)\pi}{3}$. The recursive definition extending $P_n$ into $P_{n+1}$ ensures that the straight segments of the R2-tree arms terminate at length $2^n-1$, which implies that the lengths of the R1-triangles (realised in $P_{n+3}$) along those arms also have length $2^n-1$, and occur on opposite sides of an R1-line segment. These R1-triangles force all R1-triangles of smaller length to have the same length if their corners meet in a common tile. Thus, the tiling $T_0$ is in $\CC_0$.
\end{proof}

\begin{figure}[ht]
\begin{center}
\begin{tikzpicture}
\node at (0,0) {\includegraphics[width=0.9\textwidth]{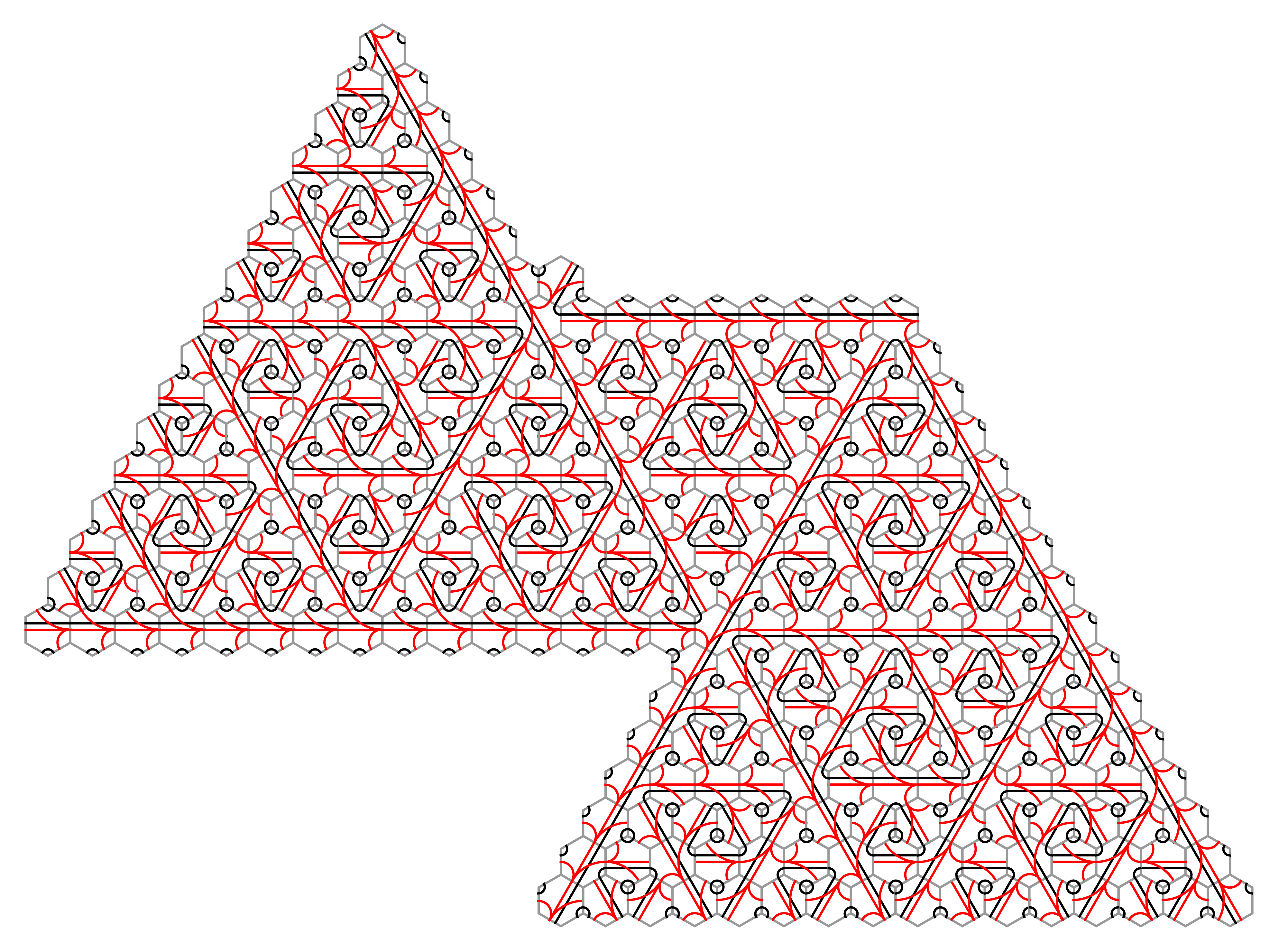}};
\node at (-1.2,-5.4) {$ A $};
\node at (7.1,-5.4) {$ B $};
\end{tikzpicture}
\end{center}
\caption{The pictorial argument for Lemma \ref{no infinite triangles}}
\label{infinite triangle fig}
\end{figure}

We now consider the collection $\CC_1$, but first a lemma.

\begin{lemma}\label{no infinite triangles}
Any tiling satisfying \textbf{R1} and containing an infinite R1-triangle (two R1-rays connected by an R1-corner) does not satisfy \textbf{R2}.
\end{lemma}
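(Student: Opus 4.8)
The plan is to argue by contradiction: suppose a tiling $T$ satisfies \textbf{R1}, contains an infinite R1-triangle $\Delta$, and \emph{also} satisfies \textbf{R2}, so that the union of all R2-trees in $T$ is connected. I would describe $\Delta$ as a wedge bounded by two infinite R1-rays $\rho_A,\rho_B$ meeting at a single R1-corner (the apex) $c$. Since every R1-triangle is equilateral, the two rays subtend an angle of $\pi/3$, and the wedge interior $W$ is an unbounded region containing infinitely many tiles, each carrying a nonempty R2-tree.

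The first substantive step is to isolate the purely local interaction between R1-lines and R2-trees. Examining the decorations on the monotile in Figure \ref{the tile}, I would record, edge by edge and for each relative orientation of two adjacent tiles compatible with \textbf{R1}, whether an R2-tree branch can continue continuously across the shared edge. The goal is the key local claim: an R2-tree can pass from one side of an R1-line to the other only by traversing an R1-corner, so that a \emph{straight} R1-segment is a barrier to R2-continuity. This is exactly where the specific placement of the R2-tree relative to the off-centre R1-curves is used, and I expect it to be the main obstacle: it amounts to a finite but careful case check verifying that, whenever \textbf{R1}-continuity is forced across an edge lying on a straight R1-line, the two adjacent R2-tree endpoints are misaligned and fail to meet.

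Granting the local claim, I would finish with the separation argument depicted in Figure \ref{infinite triangle fig}. Because $\rho_A$ and $\rho_B$ are straight and infinite, they carry no R1-corners apart from the apex $c$; hence the only point of the boundary $\rho_A\cup\{c\}\cup\rho_B$ at which an R2-tree could cross from $W$ to its exterior is $c$ itself. I would then show that a single corner cannot connect the R2-trees inside $W$ to those outside: tracking the R2-structure forced along the two rays produces two R2-branches, the ones labelled $A$ and $B$ in Figure \ref{infinite triangle fig}, and I would verify that these cannot both be routed out through the single corner $c$ while staying on R2-tree edges and without a second crossing of a ray. Consequently the R2-trees of the infinitely many tiles strictly inside $W$ form a union disconnected from the R2-trees outside $W$, so the total union of R2-trees in $T$ is disconnected, contradicting \textbf{R2}. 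Therefore no tiling satisfying \textbf{R1} that contains an infinite R1-triangle can satisfy \textbf{R2}.
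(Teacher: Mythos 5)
Your proposal hinges on the ``key local claim'' that a straight R1-segment is a barrier to R2-continuity, so that an R2-tree can pass from one side of an R1-line to the other only at an R1-corner. This claim is false, and the finite case check you defer would refute it rather than confirm it. The decisive counterexample is supplied by the paper itself: tilings in the class $\CC_1$ (Proposition \ref{fault line}) contain a bi-infinite \emph{straight} R1-line, which has no R1-corners at all, and yet satisfy \textbf{R2} --- the R2-trees of the upper half-plane, of the string of tiles carrying the line, and of the lower half-plane are all connected into a single dendrite (see Figures \ref{fault line fig} and \ref{fault line fig 5}). If your barrier claim were true, the R2-trees on the two sides of that line could never be connected, so $\CC_1$ would be empty, contradicting Proposition \ref{fault line}. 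Geometrically, the R2-tree of a tile on an R1-line reaches edges on both sides of that line, so the red network legitimately crosses straight black segments; this crossing is exactly the mechanism by which $\CC_1$ tilings satisfy \textbf{R2}.

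Because of this, the separation you aim for is also wrong as stated: the R2-trees of tiles strictly inside the wedge $W$ are \emph{not} disconnected from those outside. In the paper's proof, the two relevant objects $A$ and $B$ are the R2-components attached to the two rays, and each of them has branches penetrating the interior of $W$ as well as branches growing outside $W$; each component individually straddles your purported barrier. What the paper actually proves is the finer statement that $A$ and $B$ never meet \emph{each other}: branches into the interior of an R1-triangle never reach the opposite side and are pairwise disjoint; at the apex only one of the two trees extends into the corner tile; and any branch of the non-extending tree growing outside $W$ runs along the side of some other R1-triangle, where the same never-reach-the-opposite-side fact applies recursively. So the Jordan-curve framing (inside versus outside, with the apex as the unique possible crossing) cannot be repaired by tightening your corner analysis; the statement that needs proving is the disjointness of two specific components, both of which cross the wedge boundary, and that requires the structural facts about how R2-branches sit inside R1-triangles rather than any impossibility of crossing.
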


\begin{proof}
Suppose $T$ is a tiling satisfying \textbf{R1} and containing an infinite R1-triangle. Let $A$ and $B$ be the respective connected components of the union of R2-trees associated with each side of the infinite R1-triangle, see Figure \ref{infinite triangle fig}. We will argue that $A$ and $B$ cannot be connected.  Notice that the branches of $A$ and $B$ into the interior of the infinite R1-triangle never reach the opposite side of the R1-triangle, and are disjoint. At the infinite R1-triangle corner, one of the trees extends into the corner tile, while the other does not. Let us assume $A$ does not extend. The only remaining connection possible between $A$ and $B$ is along R1-branches of $A$ growing outside the infinite R1-triangle. Every such R2-branch extends along the side of an R1-triangle, and terminates at the corner of the R1-triangle, if the R1-triangle is finite, or extends infinitely if the R1-triangle is infinite. However, as noted above, R2-branches into the interior of an R1-triangle never reach the opposite side of the R1-triangle and are disjoint from any R2-branches extending from the opposite side. It follows that $A$ never meets $B$, and so $T$ cannot satisfy \textbf{R2}.
\end{proof}

\begin{prop}\label{fault line}
The collection $\CC_1$ is non-empty and only contains nonperiodic tilings.
\end{prop}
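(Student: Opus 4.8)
The plan is to establish both halves of the statement by exhibiting a concrete construction for nonemptiness and then arguing nonperiodicity from structural rigidity forced by the bi-infinite R1-line together with Lemma \ref{no infinite triangles}. For nonemptiness, I would start from a bi-infinite R1-line, which cleanly separates the plane into two half-planes, and populate each half-plane with a configuration of nested R1-triangles that abuts the line. The key constraint to respect is \textbf{R1} along the line itself and the dendrite rule \textbf{R2}; I would arrange the R2-trees on either side so that they thread along the R1-line and remain a single connected tree. Concretely, I expect one can build this tiling as a limit of finite patches in the same spirit as the construction of $T_0$ in Proposition \ref{existence}, growing R1-triangles of increasing length $2^n-1$ on both sides while keeping their R2-trees attached to a common backbone running along the bi-infinite R1-line. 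One must verify at each stage that adding a tile keeps the collection a patch and that the new tile's red tree connects continuously to an adjacent tree, so that the union of R2-trees stays connected in the limit.

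For nonperiodicity, suppose toward a contradiction that some $T\in\CC_1$ has a translational period, i.e.\ there is a nonzero vector $v$ with $T+v=T$. The crucial object is the bi-infinite R1-line $\ell$ guaranteed by membership in $\CC_1$. I would first pin down how $v$ can act on $\ell$: since $T$ satisfies \textbf{R1}, the image $\ell+v$ is again a bi-infinite R1-line in $T$, and I would argue that the discrete geometry of the R1-curves (they meet edges only at angle $\pi/3$, forcing equilateral structure, as already noted before Lemma \ref{triangle sizes}) severely restricts the possible bi-infinite R1-lines and hence the possible $v$. Either $v$ is parallel to $\ell$, or it is not.

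If $v$ is not parallel to $\ell$, then the period translates $\ell$ off itself, producing infinitely many parallel bi-infinite R1-lines; I would then derive a contradiction by showing that the R1-triangles trapped between two consecutive parallel lines must have bounded length, whereas periodicity along $\ell$ simultaneously forces arbitrarily large nested R1-triangles to recur, which is incompatible with the finite strip width. The cleaner route to a contradiction, and the one I expect to be decisive, uses Lemma \ref{no infinite triangles}: if $v$ is parallel to $\ell$, then translating the nested-triangle structure on one side of $\ell$ by $v$ must carry each finite R1-triangle to another R1-triangle of the same length sitting further along $\ell$. Tracking the corners where R1-triangles meet, a period parallel to $\ell$ forces an infinite chain of equal-length triangles whose R1-rays align into a bi-infinite configuration; combined with the growth of nested lengths $2^n-1$ from Lemma \ref{triangle sizes}, I would show periodicity eventually manufactures an infinite R1-triangle (two R1-rays meeting at an R1-corner), which by Lemma \ref{no infinite triangles} cannot coexist with \textbf{R2}. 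This contradicts $T\in\CC\subseteq$ the tilings satisfying \textbf{R2}.

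The main obstacle I anticipate is the nonemptiness construction rather than the nonperiodicity argument: keeping the union of R2-trees connected while filling both half-planes bounded by a bi-infinite R1-line is delicate, because unlike the spiral construction of $T_0$ there is no single central seed from which everything radiates, and one must ensure the two sides' dendrites genuinely fuse into one tree along the line without creating the forbidden closed configurations (the R2-cycles and R2-anticycles flagged in the introduction). For the nonperiodicity half, the subtle point is ruling out periods parallel to $\ell$ cleanly; the argument must convert a parallel period into the existence of an \emph{infinite} R1-triangle so that Lemma \ref{no infinite triangles} applies, and making that reduction rigorous—rather than merely asserting that equal-length triangles recur—is where I would spend the most care.
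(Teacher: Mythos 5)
Your overall plan follows the paper's strategy for non-emptiness (start from a bi-infinite R1-line and fill the two half-planes), but the proposal leaves unproved exactly the facts that the paper's proof turns on, and these are genuine gaps rather than routine details. The heart of the paper's argument is a \emph{forcing} analysis, not an arrangement problem: once a finite R1-triangle of length $2^n-1$ has its corner on the line, rule \textbf{R1} together with Lemma \ref{triangle sizes} and Lemma \ref{no infinite triangles} forces all intermediate-length triangles between it and the line, and forces the corner of a length $2^{n+1}-1$ triangle at distance $2^{n+1}$ tiles along the line, ad infinitum; consequently triangles of length $2^m-1$ occur with period exactly $2^{m+1}$ along the line for every $m$. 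This forcing does double duty: it shows the construction produces \emph{every} tiling in $\CC_1$ up to direct isometry (so conclusions drawn about the constructed tiling apply to all of $\CC_1$), and it dissolves the obstacle you flag as your main worry. The connectivity of the R2-trees is not something you must delicately arrange as you grow patches; it is automatic from the forced structure: the union of R2-trees in the triangular region between the right side of each triangle and the line is connected to the backbone along the line, and since no infinite triangles exist, every tile in the half-plane lies to the right of some triangle meeting the line. Without the forcing analysis, your non-emptiness construction is a hope rather than a proof, and you say as much.

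The nonperiodicity half has the same missing ingredient plus an unnecessary detour. Your case of a period $v$ parallel to $\ell$ aims to manufacture an infinite R1-triangle so Lemma \ref{no infinite triangles} applies, and you concede this reduction is not done; but it is also not needed. A period of length $d$ tiles parallel to $\ell$ bounds the lengths of R1-triangles whose corners meet the line (a window of $d$ tiles contains finitely many corners, each of a finite triangle, and periodicity propagates that bound), while the forcing gives triangles of length $2^m-1$ meeting the line for every $m$ --- an immediate contradiction, which is essentially the paper's one-line conclusion that interlaced periodic patterns of arbitrarily large triangles (period $2^{m+1}$ for each $m$) admit no largest period and hence no period. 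Your non-parallel case can likewise be collapsed: in the forced structure every R1-curve other than $\ell$ closes into a finite triangle, so $\ell$ is the unique bi-infinite R1-line and any period must be parallel to it; your strip argument is salvageable (R1-curves never cross, since each tile carries a single straight stripe and two disjoint arcs), but it again silently invokes the unproved claim that arbitrarily large triangles meet the line. In short: both halves of your proposal reduce to the structural forcing along the R1-line, and that is precisely the step you never carry out.
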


\begin{figure}[ht]
\[
\includegraphics[width=0.6\textwidth]{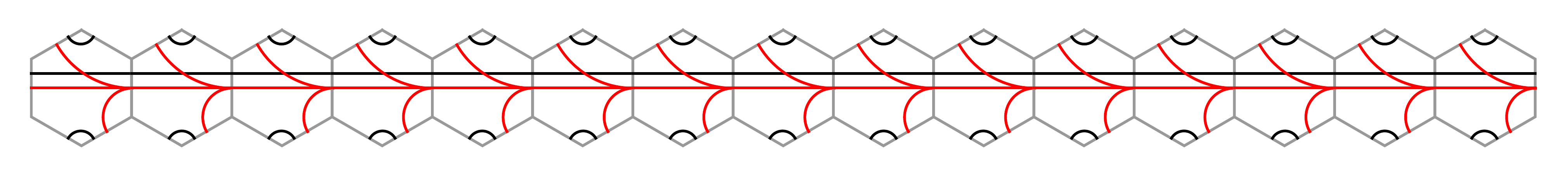}
\]
\caption{The bi-infinite string of tiles forming an R1-line in Proposition \ref{fault line}.}
\label{fault line fig}
\end{figure}

\begin{proof}
We will construct a tiling in $\CC_1$, starting with a bi-infinite string of tiles forming an R1-line. Note that the union of R2-trees along this string is connected. In order to simplify the argument, we fix the orientation of this string of tiles, as in Figure \ref{fault line fig}, and will refer to the top, bottom, left and right as per the orientation depicted. Our construction will produce every possible tiling in $\CC_1$ up to direct isometry.

We begin by adding tiles above the R1-line. Lemma \ref{no infinite triangles} implies that we can never add an infinite R1-triangle with a corner meeting the R1-line. So every R1-triangle meeting the R1-line must have length $2^n-1$ for some $n \in \N$ by Lemma \ref{triangle sizes}.

\begin{figure}[ht]
\[
\includegraphics[width=0.6\textwidth]{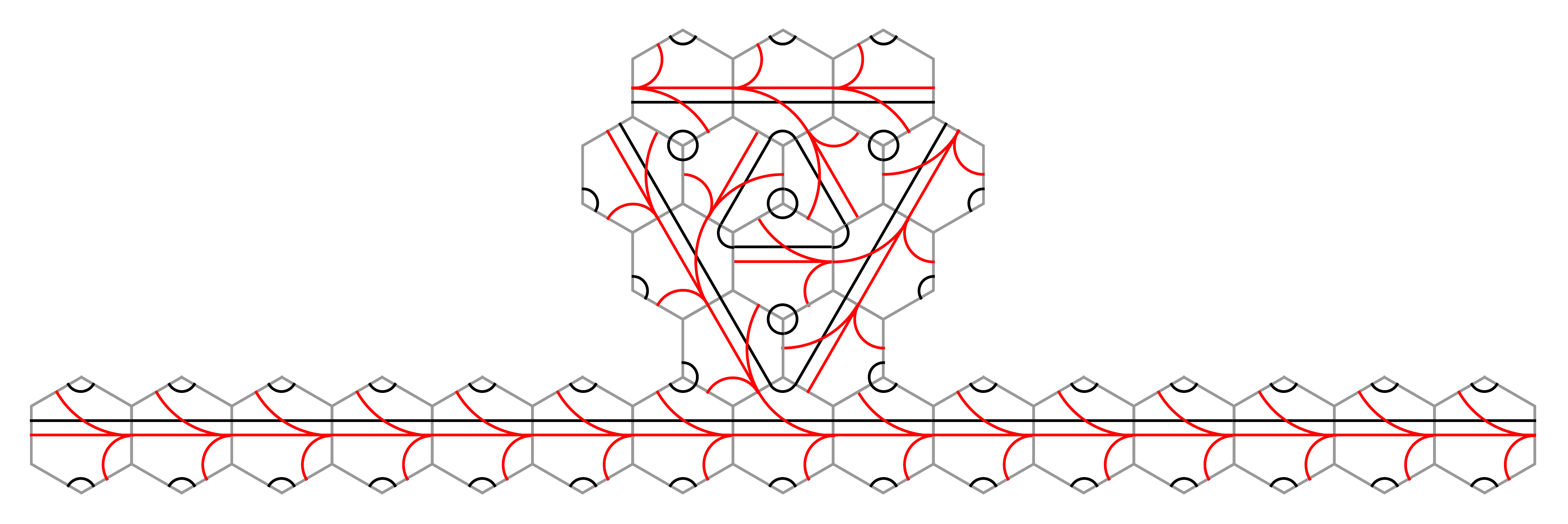}
\]
\caption{Adding tiles to attach a length $3$ R1-triangle to the R1-line}
\label{fault line fig 2}
\end{figure}

\begin{figure}[ht]
\[
\includegraphics[width=0.6\textwidth]{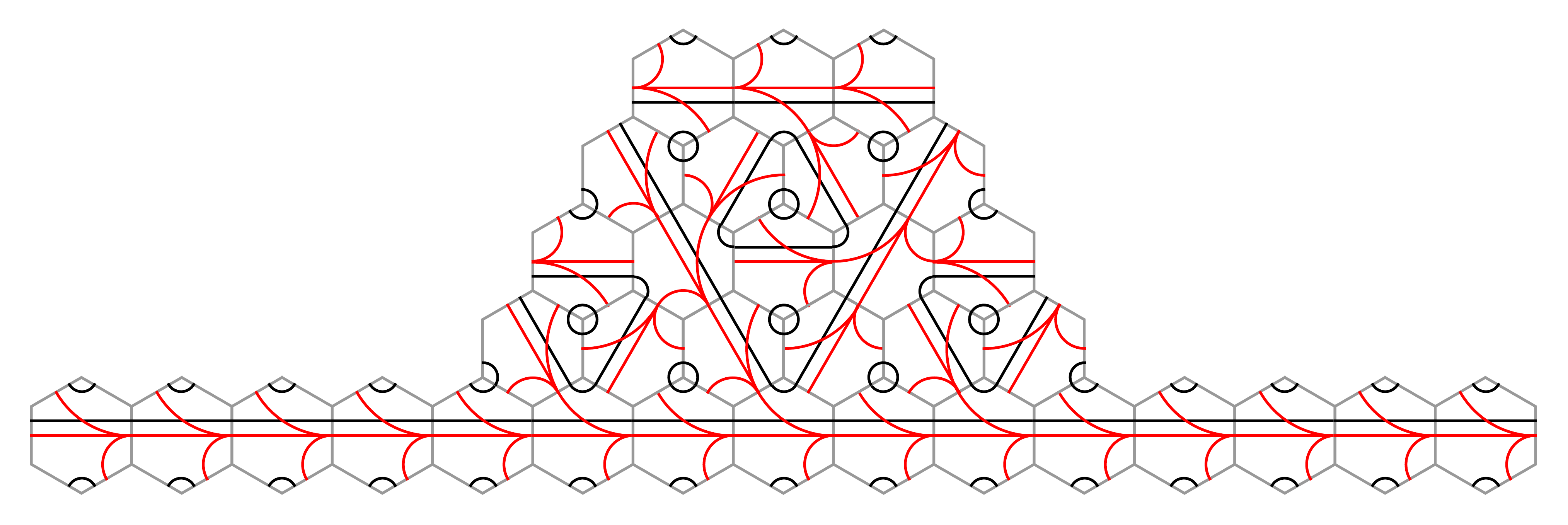}
\]
\caption{Adding the forced tiles between the R1-triangle and the R1-line}
\label{fault line fig 3}
\end{figure}

Suppose we add an R1-triangle of length $2^{n}-1$ whose bottom corner is the R1-corner of a tile in the R1-line, as in Figure \ref{fault line fig 2} with $n=2$. We note that the union of R2-trees is no longer connected, but this will be rectified shortly. Between the length $ 2^n - 1 $ R1-triangle and the R1-line, R1-triangles of all shorter legal lengths are forced, as depicted in Figure \ref{fault line fig 3}.

\begin{figure}[ht]
\[
\includegraphics[width=0.9\textwidth]{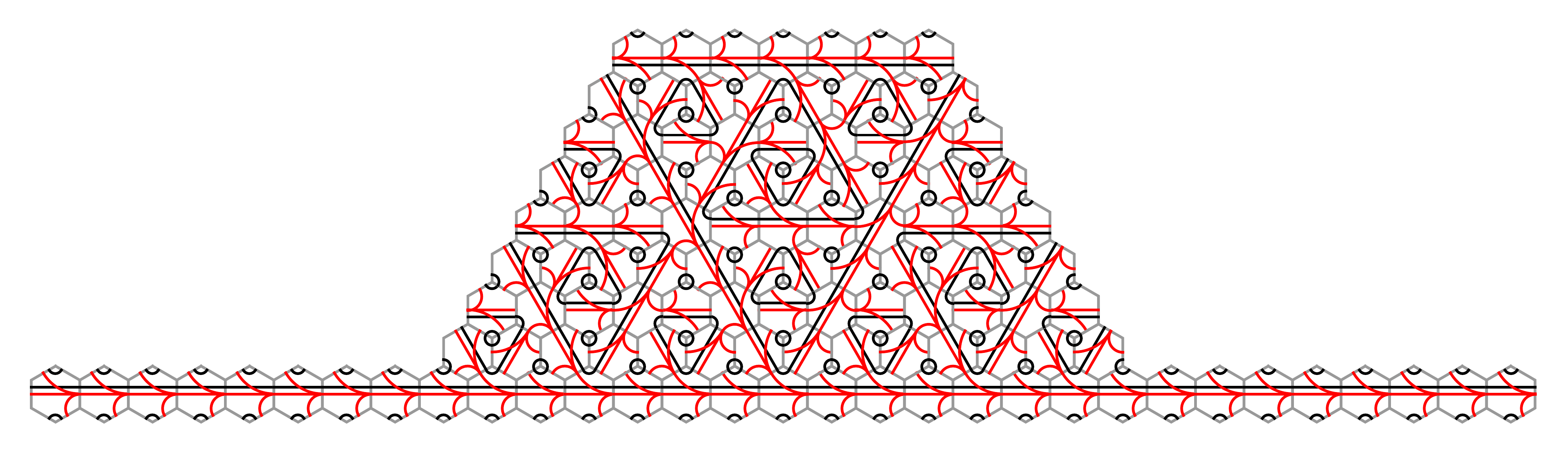}
\]
\caption{A length $7$ R1-triangle is forced to one side of a length $3$ R1-triangle}
\label{fault line fig 4}
\end{figure}

We now consider the possible tiles we may add above the tiles occurring $ 2^n $ tiles to the left or right of the bottom corner of the $ 2^n - 1 $ R1-triangle along the R1-line. The geometry of the situation forces one of these tiles to be the corner of a length $2^{n+1}-1$ R1-triangle. As above, this triangle forces the corner of a length $ 2^{n+2} - 1 $ triangle to the left or right of the bottom corner of the $ 2^{n+1} - 1 $ R1-triangle along the R1-line. This process of adding successively larger R1-triangles whose corners occur at distance $2^{n+j}$ tiles along the R1-line from its successor carries on ad infinitum. Lemma \ref{no infinite triangles} implies that tilings in $\CC$ cannot contain infinite triangles, so we must change the direction of our choice an infinite number of times so that every tile on the R1-line contains the corner of some finite length R1-triangle. Moreover, these triangles are forced to occur periodically. That is, placing a length $2^m-1$ R1-triangle of tiles at tile position $l$ on the R1-line yields a length $2^m-1$ R1-triangle of tiles at positions $l + k2^{m+1}$ for all $k \in \Z$, and at least one R1-triangle of length $2^m-1$ appears in any string of tiles along the R1-line of length $2^{m+1}$. This construction leads to a half-plane of tiles that satisfies \textbf{R1}. 

\begin{figure}[ht]
\[
\includegraphics[width=0.9\textwidth]{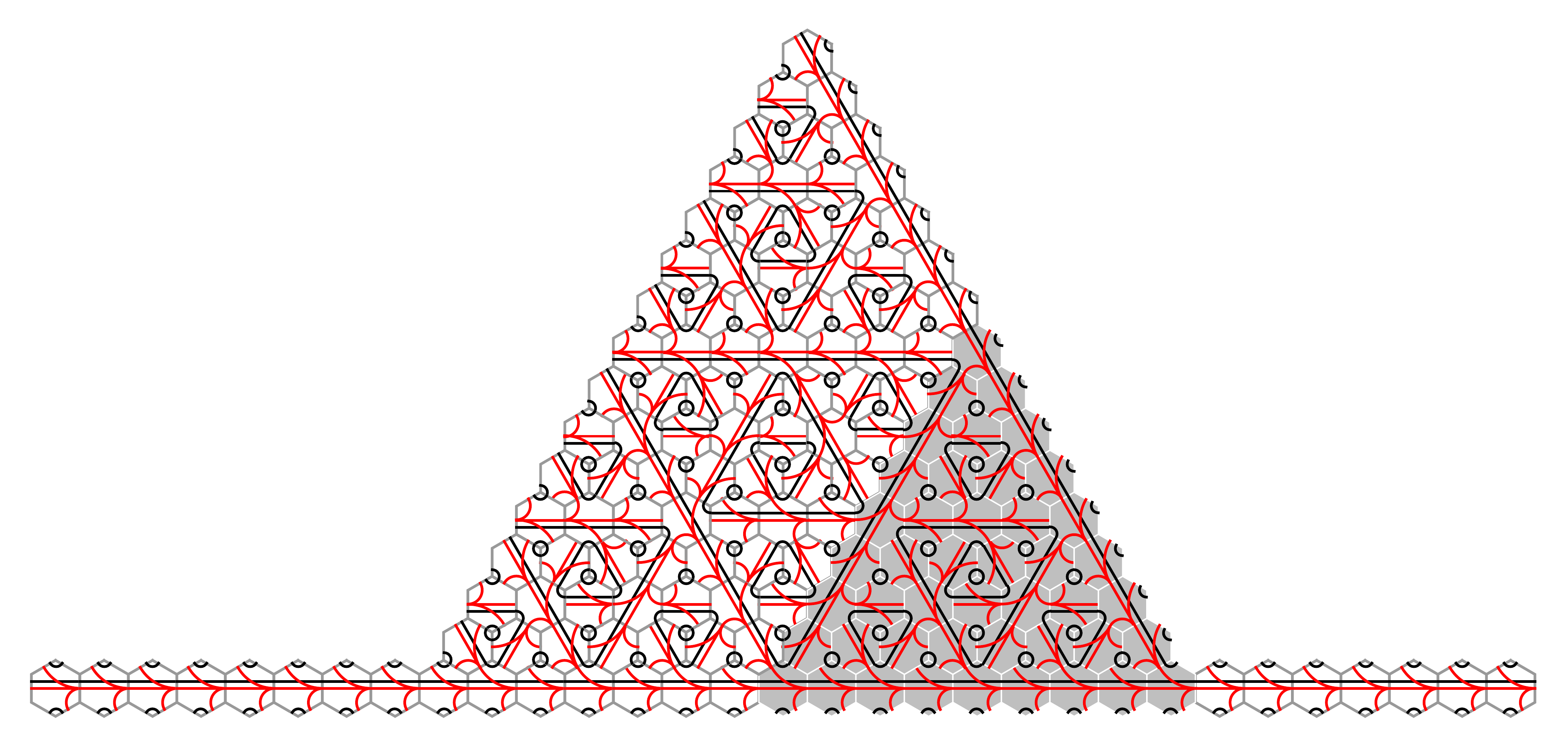}
\]
\caption{To the right of an R1-triangle, the R2-trees of the shaded tiles are connected to the R2-trees along the infinite string}
\label{fault line fig 5}
\end{figure}

We now argue that the half-plane of tiles constructed above also satisfies \textbf{R2}. Indeed, the union of R2-trees along the bi-infinite string of tiles is connected. Given a length $2^m-1$ R1-triangle whose corner meets the R1-line, the union of R2-trees in a triangular arrangement of tiles between its right side and the R1-line is connected to the union of R2-trees along the R1-line, the shaded region in Figure \ref{fault line fig 5} is an example of such a patch. Since there are no infinite R1-triangles, every tile in the upper half-plane is to the right of some R1-triangle whose corner meets the R1-line. Thus, the union of R2-trees in the upper half-plane is connected, so the upper half-plane is a patch satisfying \textbf{R1} and \textbf{R2}.

An analogous argument implies that all tiles in the lower half-plane satisfy \textbf{R1} and \textbf{R2}. Since the upper and lower half-planes intersect along the bi-infinite R1-line, the resulting tiling is in $\CC_1$. Since there are arbitrarily large R1-triangles arranged in interlaced periodic patterns whose corners meet the R1-line, the resulting tiling is nonperiodic, giving the desired result.
\end{proof}

We note that the classes $\CC_0$ and $\CC_1$ have non-trivial intersection. Indeed, if the R1-triangles on opposite sides of the bi-infinite R1-line of a tiling in $\CC_1$ have the same length, then it is also in $\CC_0$.

\section{Proof of Theorem \ref{main result monotile}}\label{tile uniqueness}

We have shown in Propositions \ref{existence} and \ref{fault line}, that the classes $\CC_0$ and $\CC_1$ from Definition \ref{classes} are non-empty and contain only nonperiodic tilings. In this section, we will prove that $\CC=\CC_0 \cup \CC_1$, which will prove Theorem \ref{main result monotile}. The key is to prove that any tiling in $\CC$ that does not belong to $\CC_0$ must have an infinite R1-line through it, so it belongs to $\CC_1$.

\begin{figure}[ht]
\begin{center}
\begin{tikzpicture}
\node (tilea) [label=below:{$n=2$}] at (-1,0) {\includegraphics[scale=0.33]{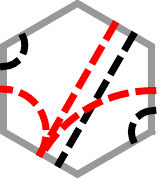}};
\node (tileaa) at (0.9,0) {\includegraphics[scale=0.33,angle=0]{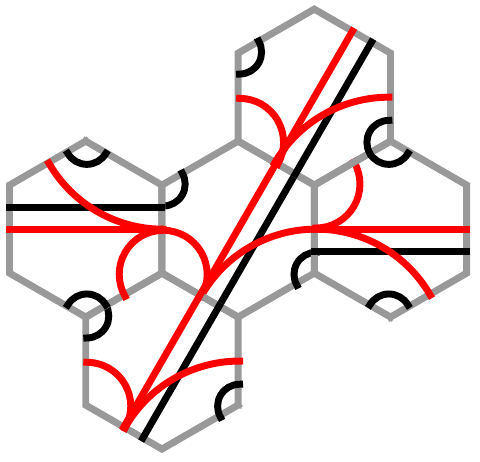}};
\draw[->] (-0.6,0) -- (-0.1,0);
\node (tileb) [label=below:{$n=3$}] at (2.5-3,-3) {\includegraphics[scale=0.33]{tree_no_dot_0}};
\node (tilebb) at (5.2-2.8,-3) {\includegraphics[scale=0.33,angle=0]{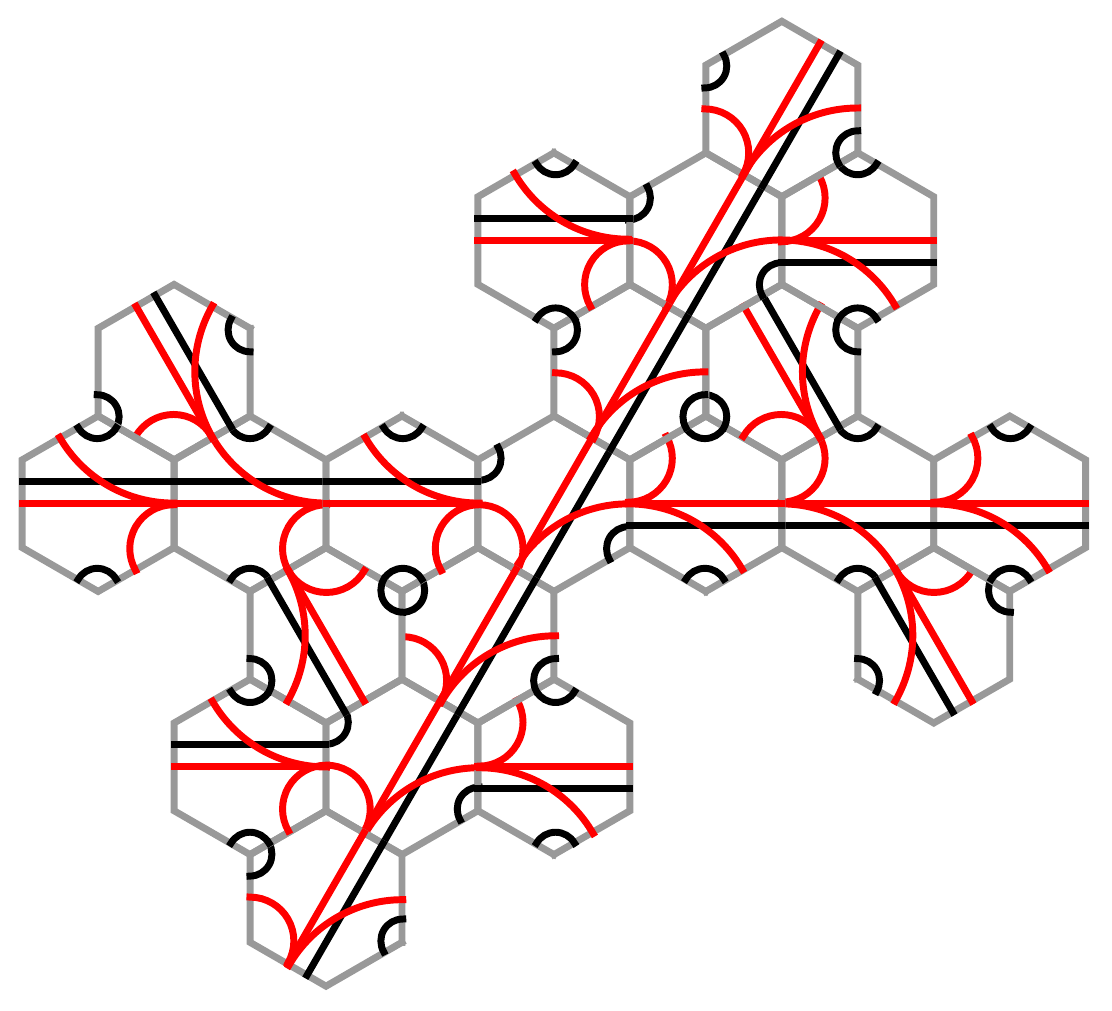}};
\draw[->] (2.9-3,-3) -- (3.4-3,-3);
\node (tilec) [label=below:{$n=4$}] at (5,-1) {\includegraphics[scale=0.33]{tree_no_dot_0}};
\node (tilecc) at (10,-1) {\includegraphics[scale=0.33]{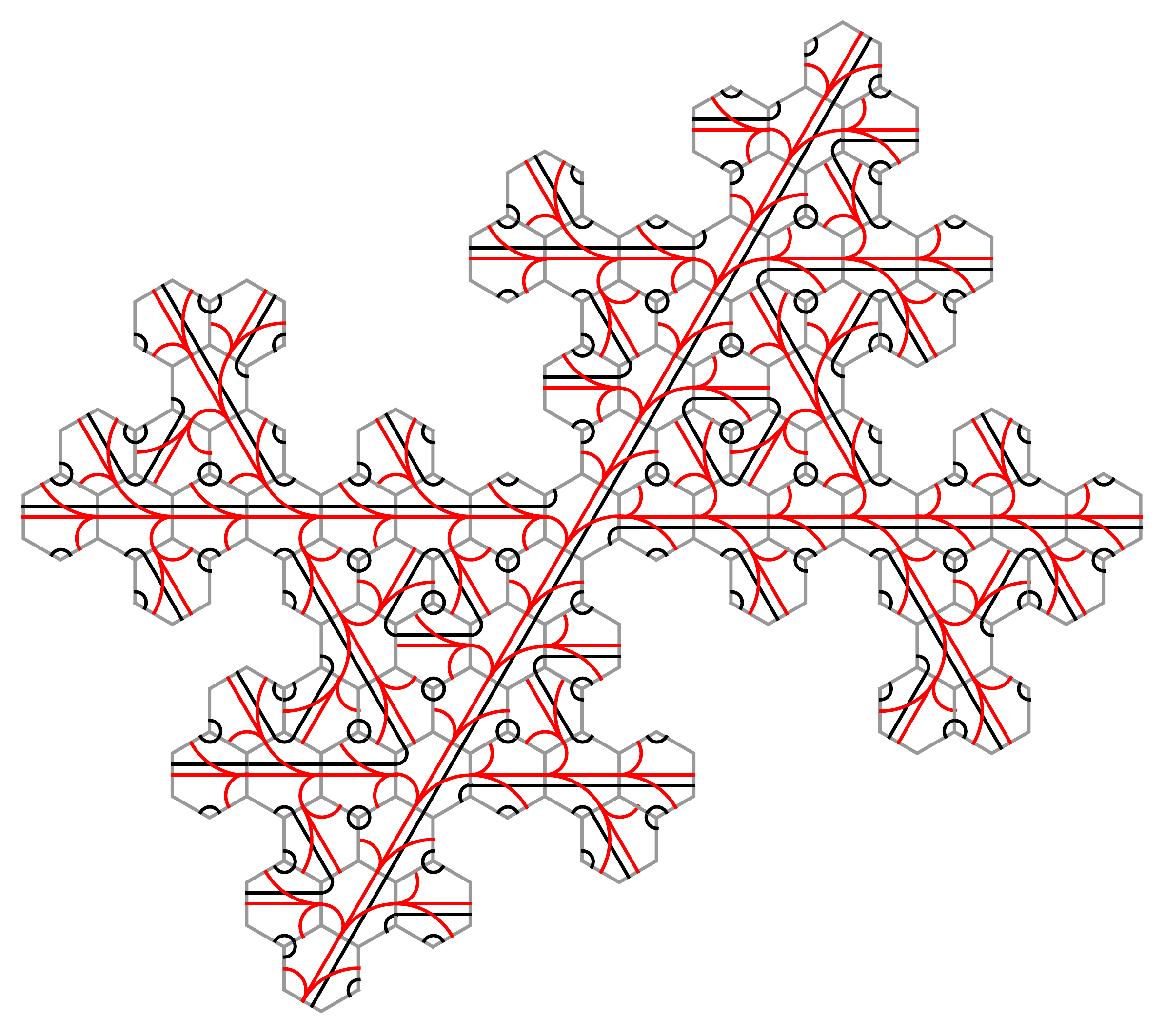}};
\draw[->] (5.4,-1) -- (5.9,-1);
\end{tikzpicture}
\end{center}
\caption{Monotiles with dashed lines represent legal patches of length $2^n-1$}
\label{dotted patch}
\end{figure}

\begin{figure}[ht]
\begin{center}
\begin{tikzpicture}
\node (tilea) {\includegraphics[scale=0.22,angle=0]{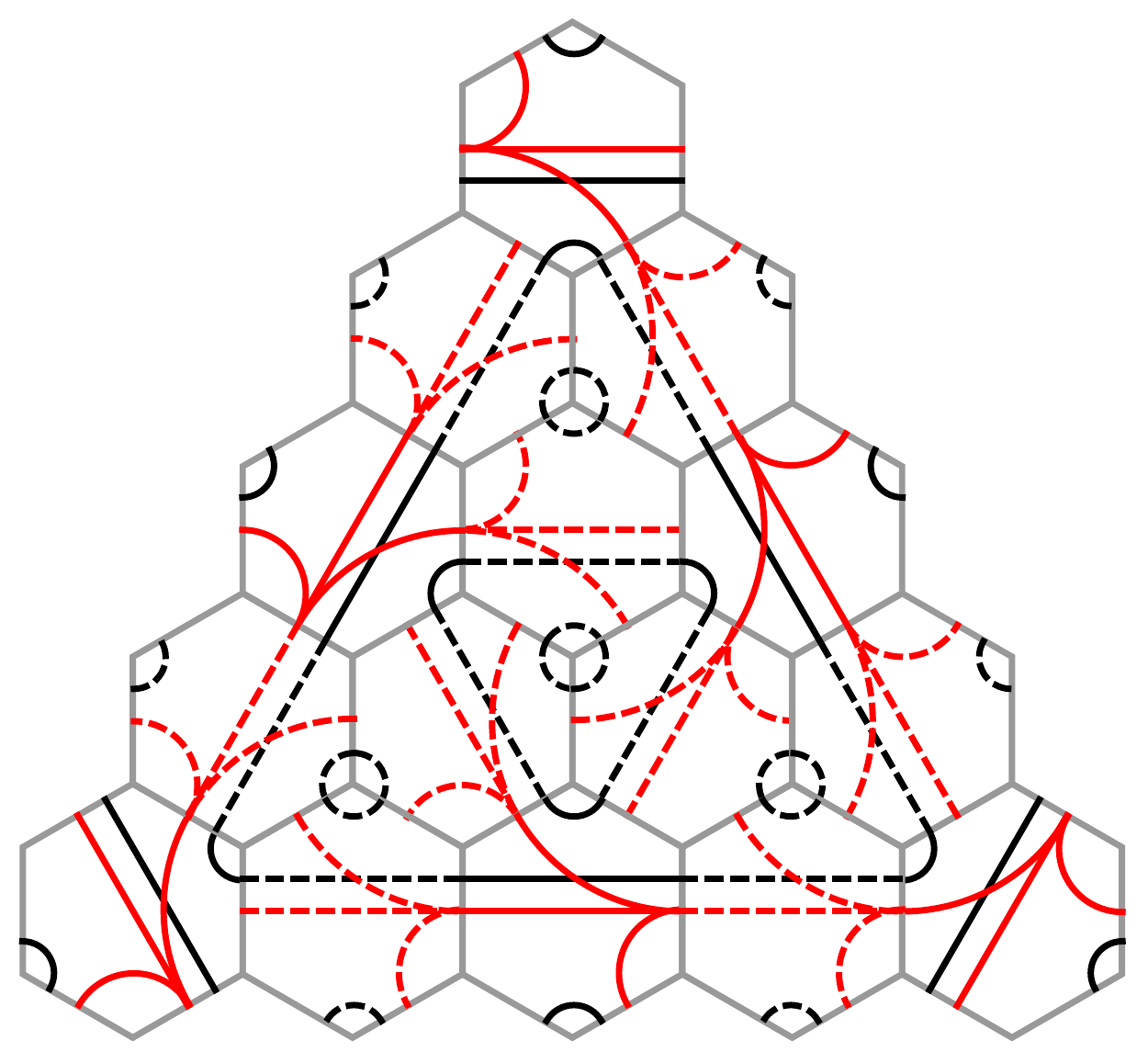}};
\node (tileaa) at (-4,0) {\includegraphics[scale=0.3,angle=0]{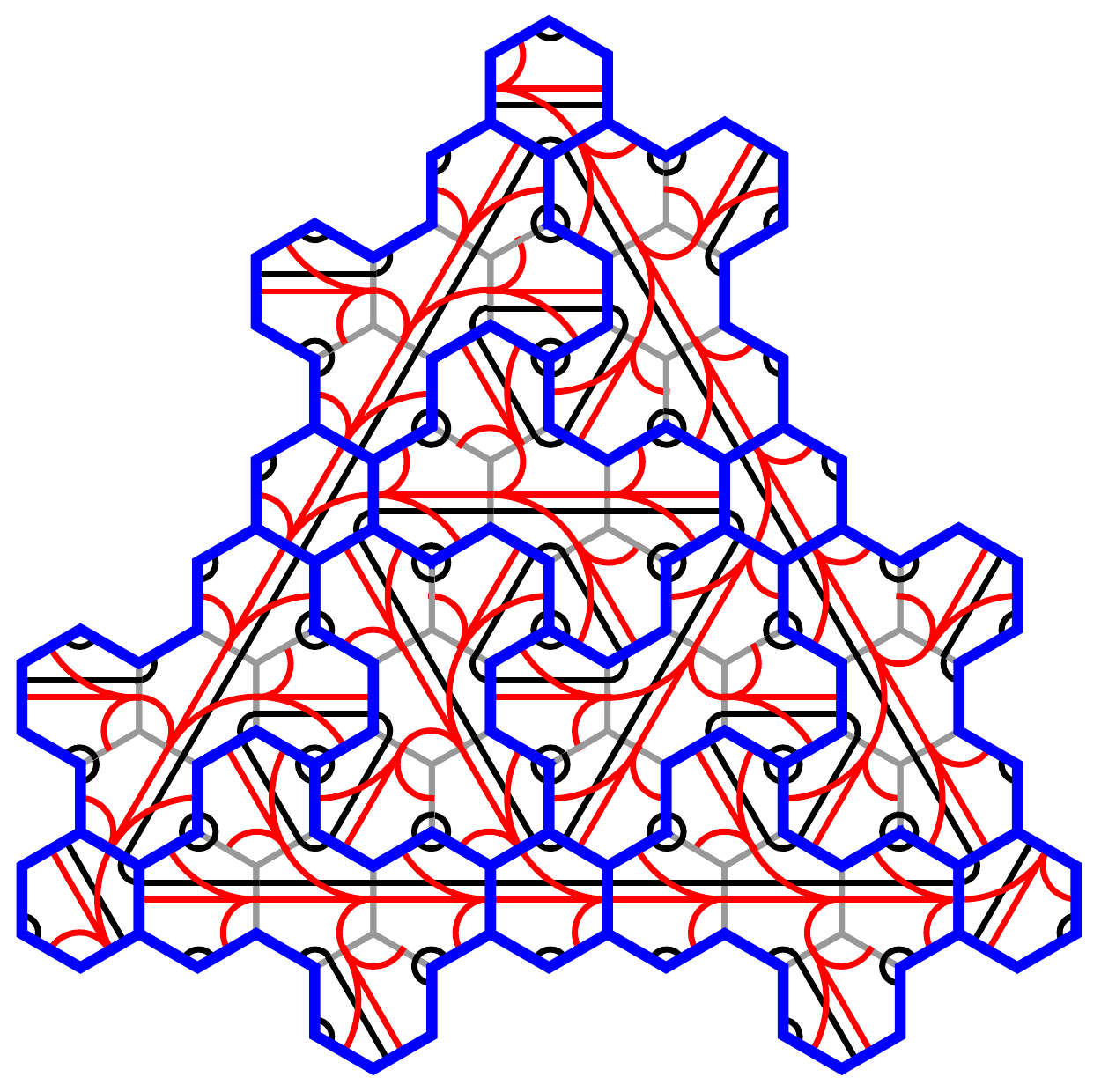}};
\draw[->] (1.1,0) -- node[above] {\scriptsize $n=3$} (2.1,0);
\node (tileaa) at (6,0) {\includegraphics[scale=0.3,angle=0]{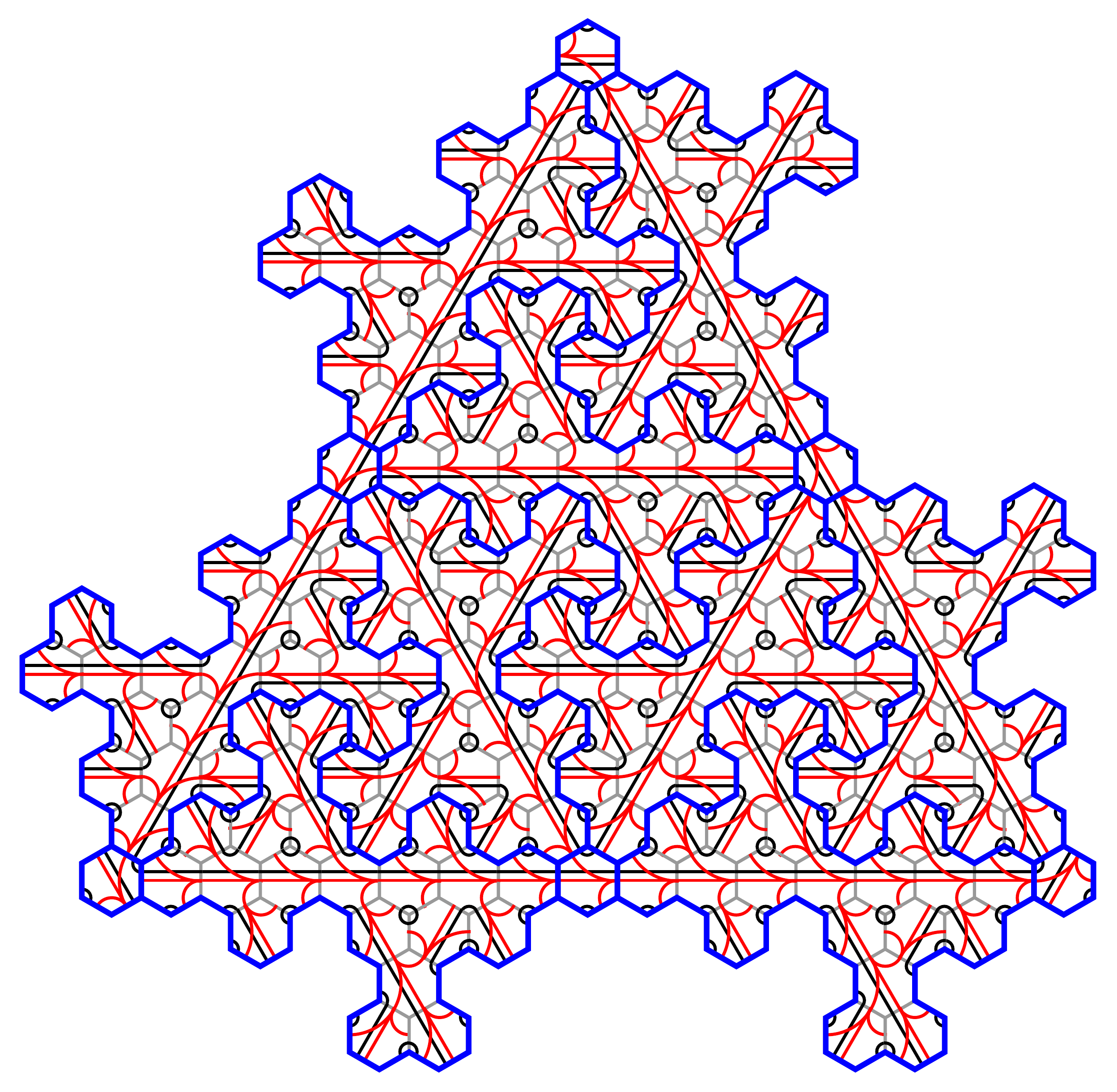}};
\draw[<-] (-2.1,0) -- node[above] {\scriptsize $n=2$} (-1.1,0);
\end{tikzpicture}
\end{center}
\caption{For $n=2$ and $n=3$, the patches in Figure \ref{dotted patch} fit together as above}
\label{dotted fit}
\end{figure}

In order to provide general arguments, we introduce pictorial notation. For $n \in \N$, a monotile with dashed lines represents a patch of tiles depicted in Figure \ref{dotted patch}, where the main diameter of R2-trees has length $2^n-1$. We note that these are the patches $P_n$ that appeared when we constructed a tiling in Proposition \ref{existence}. Moreover, notice that these patches fit together in the manner depicted in Figure \ref{dotted fit}.

\begin{figure}[ht]
\begin{tikzpicture}
\node (tile7_1) at (0,0) [label=below:{an R2-cycle of length $2^n-1$}] {
\includegraphics[width=5cm]{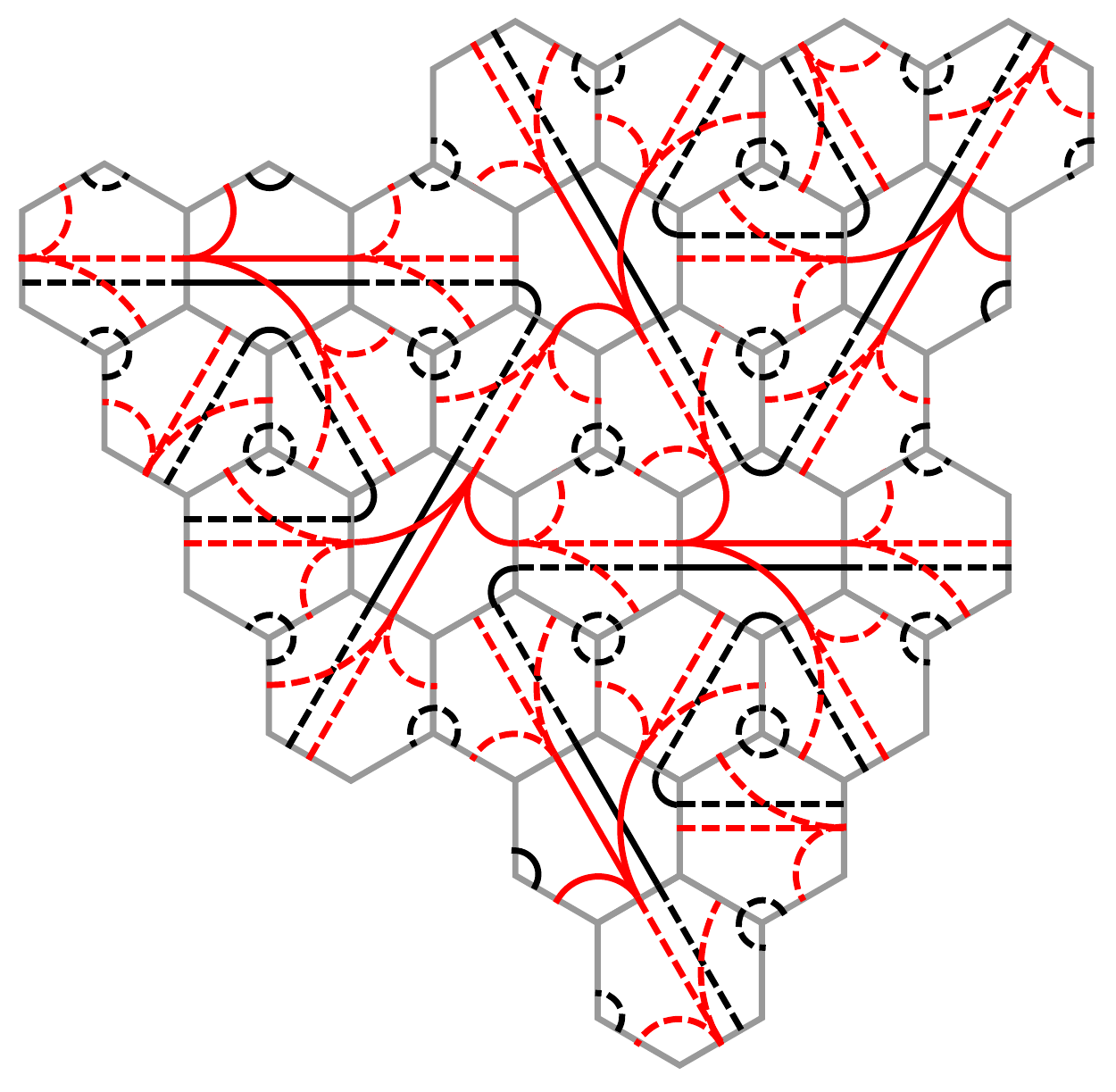}
};
\node (tile7_2) at (8,0) [label=below:{an R2-anticycle of length $2^n-1$}] {
	\includegraphics[width=5cm]{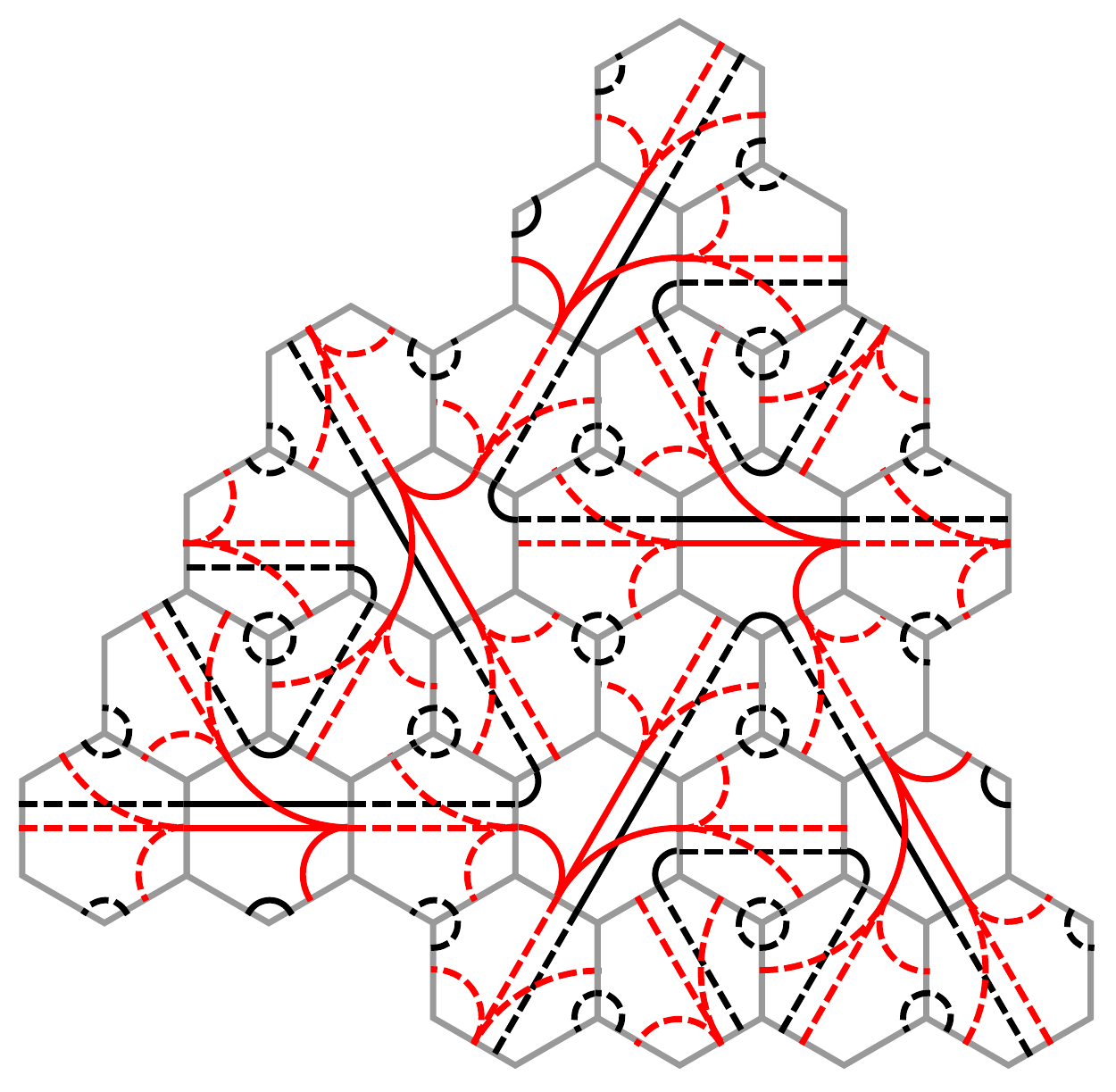}
};
\end{tikzpicture}
\caption{The illegal patches appearing in Lemmas \ref{cyclic connected tree} and \ref{cyclic disconnected tree}}
\label{illegal patches}
\end{figure}

The fundamental tool of this section is to prove that \textbf{R2} rules out three R1-triangles meeting corners to sides in the cyclic fashion appearing in Figure \ref{illegal patches}, where the solid lines have length one, and the dotted lines have length $2^n -1$ for $n\in\{0,1,2,\ldots\}$. Due to the behaviour of the R2-trees in the centre of the cyclic R1-triangles, we will refer to these configurations as \emph{R2-cycles} and \emph{R2-anticycles}, respectively. We note that Lemma \ref{triangle sizes} implies that R2-cycles and R2-anticycles only occur with side length $2^n -1$.

\begin{figure}[ht]
	\centering
	\includegraphics[width=0.5\textwidth]{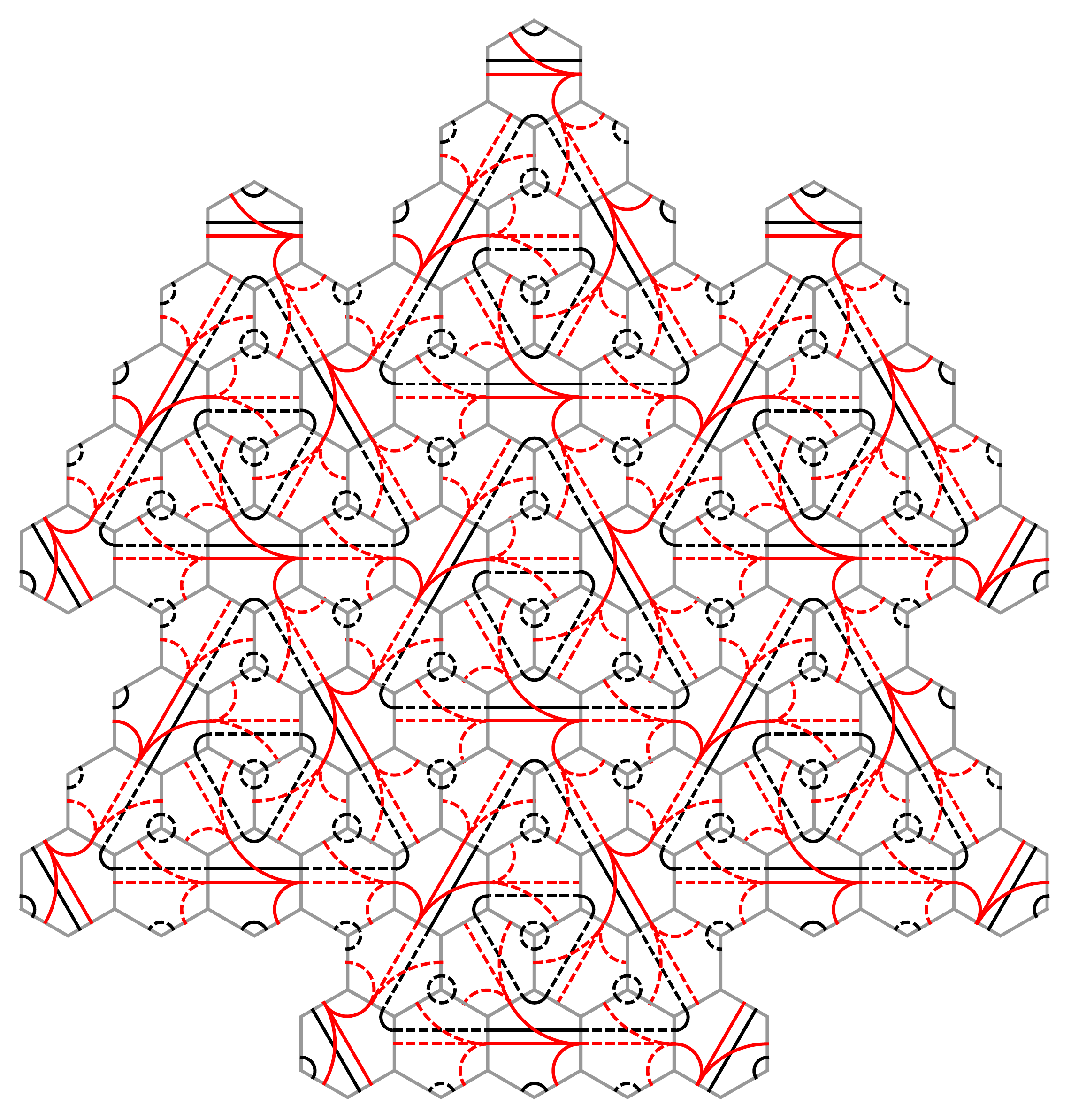}	
	\caption{A periodic lattice of R1-triangles}
	\label{periodic pattern}
\end{figure}

An immediate consequence of ruling out R2-cycles and R2-anticycles is that periodic lattices of R1-triangles, as depicted in Figure \ref{periodic pattern}, is no longer possible. Of course, it is clear that such lattices do not satisfy \textbf{R2}. However, a growth rule that disallows these periodic lattices was the key to the results of this paper.

\begin{figure}[ht]
\begin{tikzpicture}[>=stealth]
\node at (0,0) {\includegraphics[width=0.65\textwidth]{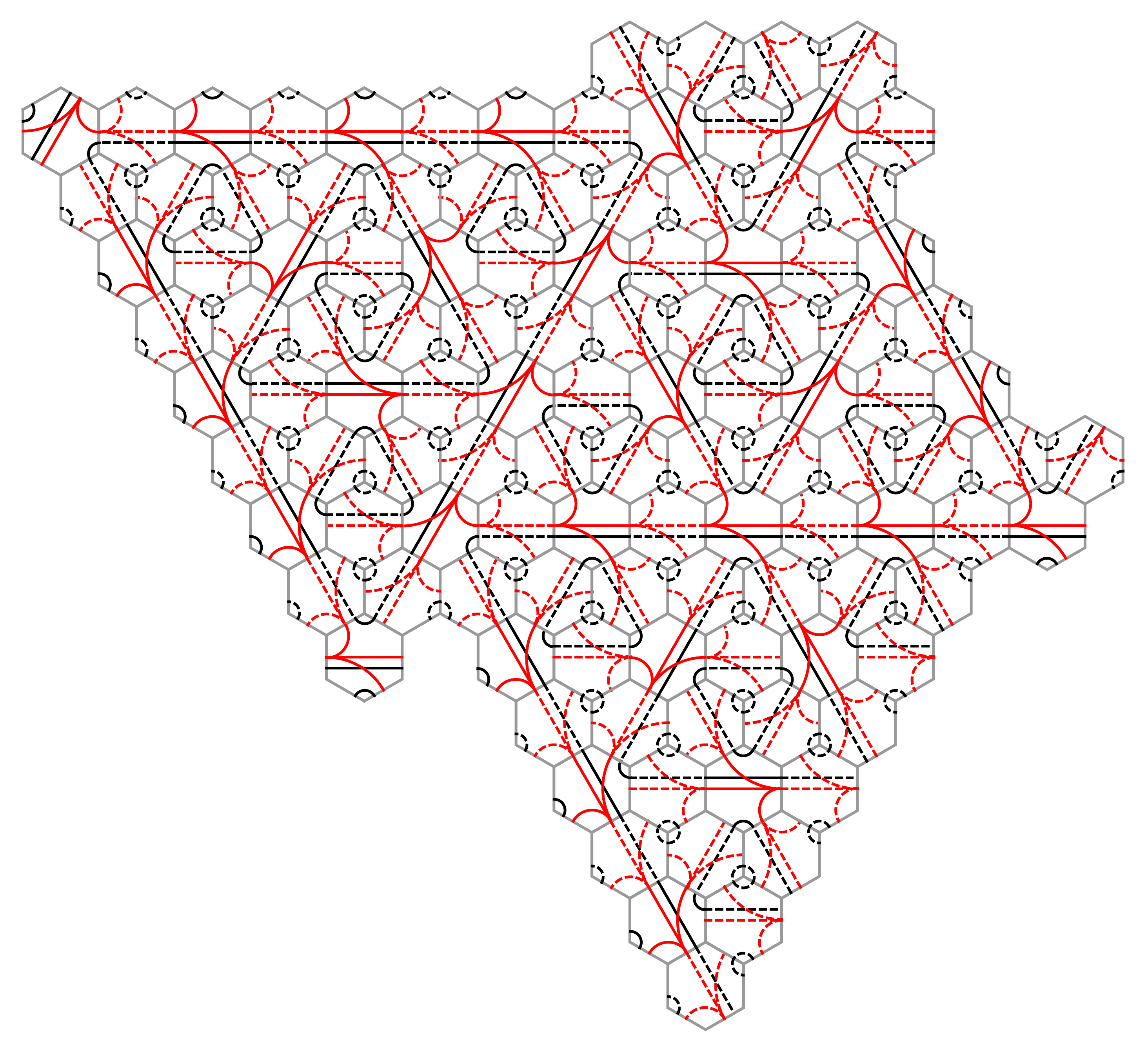}};
\node at (-0.0,3.1) {\textbf{A}};
\node at (1.6,3.6) {\textbf{B}};
\node at (-3.3,-1.5) {$ m=1 $};
\node[align=center] (a) at (7.5,2.5) {
	An R1-triangle \\
	with length \\
	$ 2^n (2^2 - (2^1-1)) -1$ \\
	(i.e. $m=k=1$),\\
	which is impossible\\
	by Lemma \ref{triangle sizes}};
\node at (4,2.5) {} edge[<-] (a);
\end{tikzpicture}
\caption{The case for $n \in \N$ and $m=1$ in the induction from Lemma \ref{cyclic connected tree}}
\label{cyclic tree fig}
\end{figure}

\begin{figure}[ht]
\begin{tikzpicture}
\node at (0,0) {\includegraphics[scale=0.3]{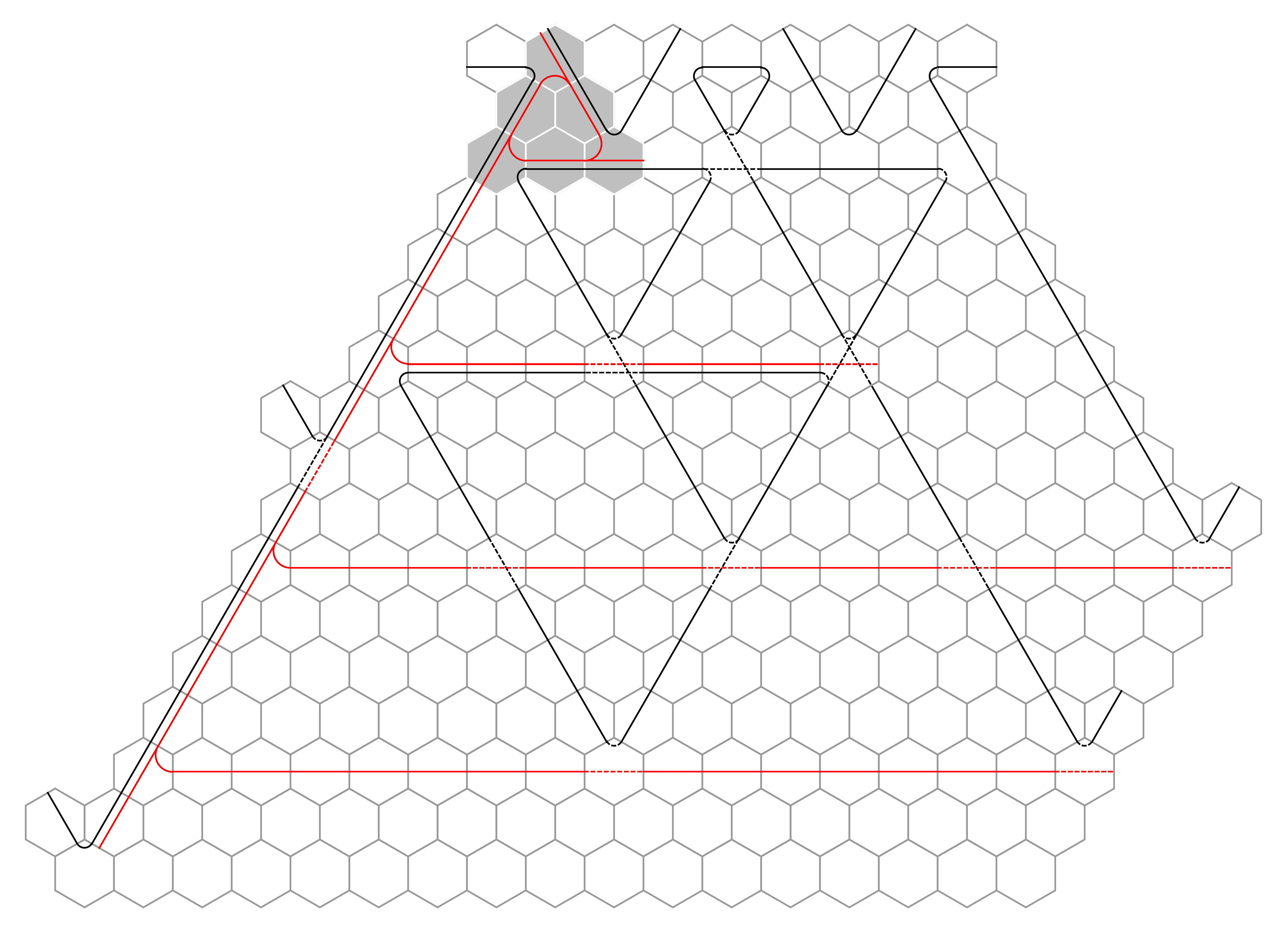}};
\node at (-5.4,0.7) {$ m=1 $};
\node at (-8,-3.9) {$ m=2 $};
\node at (-2.0,4.1) {\textbf{A}};
\node at (-0.35,4.65) {\textbf{B}};
\node at (2,2.8) {$ \mathbf{5} $};
\node at (4.7,-1.75) {$ \mathbf{13} $};
\node at (6,0.6) {$ \mathbf{9} $};
\end{tikzpicture}
\caption{The case for $n=1$ and $m=1,2$ in the induction from Lemma \ref{cyclic connected tree}}
\label{cyclic tree n2}
\end{figure}

\begin{lemma}\label{cyclic connected tree} 
Suppose $T$ is a tiling in $\CC$, then $T$ does not contain an R2-cycle.
\end{lemma}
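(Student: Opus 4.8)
The plan is to argue by contradiction. Assuming $T$ contains an R2-cycle, I will show that \textbf{R1} forces an R1-triangle whose side length is not of the form $2^p-1$, contradicting Lemma \ref{triangle sizes}. As already observed in the text, Lemma \ref{triangle sizes} guarantees that the three R1-triangles comprising the R2-cycle share a common length $2^n-1$, so I begin by fixing $n$, choosing coordinates adapted to the cyclic arrangement, and labelling the two R2-tree components meeting at the cycle as $A$ and $B$, exactly as in Figure \ref{cyclic tree fig} and in the proof of Lemma \ref{no infinite triangles}.

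The core of the argument is a local-to-global forcing analysis. Wherever the corner of one R1-triangle abuts the side of the next, the R1-curves leave no freedom: the only continuation of the markings consistent with \textbf{R1} extends the abutting side into the straight R1-line segment of a strictly larger R1-triangle. I would package this as an inductive claim governed by a nesting parameter $m$ (together with an auxiliary index $k$ recording the stage of propagation around the cycle), the claim being that each step forces an R1-triangle of length $2^n\bigl(2^{k+1}-(2^m-1)\bigr)-1$. The base case $m=k=1$ yields length $3\cdot 2^n-1$, depicted in Figure \ref{cyclic tree fig}, while the first stages for $n=1$ appear in Figure \ref{cyclic tree n2}. Throughout the induction one has $m\le k$, so that $2^{k+1}-(2^m-1)=2^m\bigl(2^{k+1-m}-1\bigr)+1$ is an odd integer strictly greater than $1$; consequently $2^n\bigl(2^{k+1}-(2^m-1)\bigr)$ is never a power of $2$, and the forced length is never of the form $2^p-1$. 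This is the desired contradiction with Lemma \ref{triangle sizes}.

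The main obstacle is the inductive step itself: verifying that the R2-trees can be closed into the required cycle by no legal tiling other than the one that introduces the next, larger R1-triangle. This calls for a careful case analysis of the R1-curve decorations at each corner-meets-side junction, eliminating every alternative completion of the markings; the accompanying figures are meant to carry much of this bookkeeping. Once the local rigidity is in place, the global induction combines with the elementary fact that $2^n\bigl(2^{k+1}-2^m+1\bigr)$ cannot be a power of two to force the contradiction and rule out the R2-cycle.
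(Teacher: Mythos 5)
There is a genuine gap, and it is fatal to the approach as stated: you claim that \textbf{R1} alone, via a local rigidity analysis of corner-meets-side junctions, forces an R1-triangle of illegal length. This cannot be true. The periodic lattice of R1-triangles in Figure \ref{periodic pattern} satisfies \textbf{R1}, contains R2-cycles (that configuration is exactly what an R2-cycle encodes), and every R1-triangle in it has the same legal length $2^n-1$; it fails only \textbf{R2}. So no amount of bookkeeping with the R1-curve decorations can show that the markings ``leave no freedom'' and force a strictly larger triangle --- the same-size periodic continuation is always a consistent completion of the markings under \textbf{R1}. Any correct proof of this lemma must invoke \textbf{R2} in an essential way, and your proposal essentially never does: your only mention of the R2-trees is the unexplained phrase about closing them into the required cycle, while the contradiction is attributed entirely to \textbf{R1}.

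This is precisely where the paper's proof is different. It uses \textbf{R2} to note that the union of R2-trees in a legal tiling is connected, hence infinite, so at least one R2-branch leaving the central R2-cycle must be infinite. Lemma \ref{no infinite triangles} forbids the R1-triangle $\mathbf{A}$ carrying that branch from being infinite, and Lemma \ref{triangle sizes} then forces $\mathbf{A}$ to be finite of length $2^{n+m+1}-1$ for some $m \ge 1$, i.e.\ strictly larger than the cycle's triangles. It is this \textbf{R2}-forced larger triangle, not any \textbf{R1} rigidity, that seeds the geometric induction: the next triangle $\mathbf{B}$ clockwise in the cycle is then forced to bound an R1-triangle of length $2^n\bigl(2^{m+1}-(2^k-1)\bigr)-1$ for some $k \in \{1,\dots,m\}$, contradicting Lemma \ref{triangle sizes}. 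Your closing arithmetic (an odd factor greater than $1$ times $2^n$ is never a power of two) matches the paper's and is fine; what is missing is the step that produces a larger triangle in the first place, and without \textbf{R2} no such step exists. (A smaller slip: in Figure \ref{cyclic tree fig}, $\mathbf{A}$ and $\mathbf{B}$ label R1-triangles, not R2-tree components.)
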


\begin{proof}
We begin with a patch containing a R2-cycle of length $2^n -1$, and show that any tiling that extends the patch fails to satisfy \textbf{R2}. Fix $n \in \{0,1,2,\ldots\}$, and suppose we start with the patch on the left-hand side of Figure \ref{illegal patches}.

Since \textbf{R2} implies that the R2-tree must be infinite, at least one branch of the R2-tree leaving the central R2-cycle must be infinite. We will refer to the R1-triangle associated with the infinite tree as \emph{triangle} $\mathbf{A}$. Lemma \ref{no infinite triangles} implies that triangle $\mathbf{A}$ cannot be infinite. We will show that triangle $\mathbf{A}$ cannot be finite either. Lemma \ref{triangle sizes} implies that triangle $\mathbf{A}$ must have length $2^{n+m+1}-1$ for some $m \in \{1,2,\ldots\}$. Since the union of R2-trees terminates at the R1-corner of $\mathbf{A}$, there must be an infinite branch leaving the main tree. All R2-branches towards the interior of $\mathbf{A}$ are finite, so any infinite branch must turn away from triangle $\mathbf{A}$. A straightforward, but geometrically technical, induction proves that if triangle $\mathbf{A}$ has length $2^{m+n+1}-1$, then the next R1-triangle clockwise in the R2-cycle of length $(2^n -1)$ (labelled $\mathbf{B}$ in Figures \ref{cyclic tree fig} and \ref{cyclic tree n2}) forces an R1-triangle of length $2^n(2^{m+1}-(2^k-1))-1$ for some $k \in \{1,\dots,m\}$. Lemma \ref{triangle sizes} implies that such triangles cannot exist in a tiling, giving us the desired contradiction. So $T$ cannot belong to $\CC$.
\end{proof}

\begin{remark}
A comment on the omitted geometric induction argument from the proof of Lemma \ref{cyclic connected tree} is in order. Figure \ref{cyclic tree fig} shows the argument for arbitrary $n \in \N$ and $m=1$. Figure \ref{cyclic tree n2} shows the geometric argument for $n=1$ and $m=1,2$. Using Figure \ref{cyclic tree n2}, the general argument for $m=1,2$ follows by using dashed tiles of length $2^n-1$, as depicted in Figures \ref{dotted patch} and \ref{dotted fit}.
\end{remark}

\begin{lemma}\label{cyclic disconnected tree} 
Suppose $T$ is a tiling in $\CC$, then $T$ does not contain an R2-anticycle.
\end{lemma}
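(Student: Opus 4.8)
The plan is to run an argument closely parallel to Lemma \ref{cyclic connected tree}, while exploiting that we have already ruled out R2-cycles. Suppose for contradiction that $T \in \CC$ contains an R2-anticycle of length $2^n-1$, so that the three R2-tree segments meeting at the centre of the configuration are pairwise disconnected there, as on the right-hand side of Figure \ref{illegal patches}. Since $T$ satisfies \textbf{R2}, the union of all R2-trees in $T$ is connected and infinite. Because the three central segments are disconnected locally, each must reach the rest of the tree through R2-branches that leave its R1-triangle; this is a stronger source of outward branches than in the cycle case, where the centre was already connected.

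First I would locate an infinite branch exactly as in Lemma \ref{cyclic connected tree}: the tree is infinite, so some branch leaving the anticycle is infinite, and I call the R1-triangle it follows $\mathbf{A}$. Lemma \ref{no infinite triangles} forces $\mathbf{A}$ to be finite, and Lemma \ref{triangle sizes} forces its length to be $2^{n+m+1}-1$ for some $m \ge 1$. Since every R2-branch directed into the interior of $\mathbf{A}$ terminates at a finite R1-corner, the infinite branch must turn outward at the corner of $\mathbf{A}$ toward the next triangle $\mathbf{B}$ of the anticycle. From here I would replay the geometric induction of Lemma \ref{cyclic connected tree}: the outward-turning branch together with the R1-geometry at the corner shared with $\mathbf{B}$ forces a successor R1-triangle whose length is again of the forbidden form $2^n(2^{m+1}-(2^k-1))-1$ for some $k \in \{1,\dots,m\}$, which Lemma \ref{triangle sizes} prohibits. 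The only structural change from the cycle case is the initial disconnection at the centre, which merely dictates which tiles the connecting branches traverse and does not disturb the inward-finite/outward-growing dichotomy that powers the induction.

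The hard part will be confirming that the disconnected centre does not open an alternative route to \textbf{R2}-connectivity that escapes this induction. Concretely, I must check that the three central segments can only be joined either by inward branches, which are finite and hence cannot cross an R1-triangle, or by outward branches, which trigger the forbidden triangle growth; any attempt to close the connection around a triangle instead encircles it and produces an R2-cycle, contradicting Lemma \ref{cyclic connected tree}. As in the previous lemma, verifying this reduces to checking the base cases $m=1,2$ against the pictorial templates of Figures \ref{dotted patch} and \ref{dotted fit}, and then propagating them with the dashed length-$2^n-1$ tiles; the book-keeping of which central piece connects to which is where the genuine care is required.
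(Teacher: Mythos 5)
Your proposal shares the paper's opening moves (one infinite branch, Lemma \ref{no infinite triangles} to rule out an infinite triangle, Lemma \ref{triangle sizes} to pin down the length), but its main engine --- ``replaying'' the geometric induction of Lemma \ref{cyclic connected tree} to force a triangle of the forbidden length $2^n(2^{m+1}-(2^k-1))-1$ --- is asserted rather than established, and the assertion glosses over exactly what distinguishes an anticycle from a cycle: the chirality of the configuration. That induction is tied to the cyclic order in which triangles $\mathbf{A}$ and $\mathbf{B}$ meet and to the fact that the central R2-trees are already joined; in the mirrored (anticycle) configuration neither holds, so there is no reason the same forced lengths come out, and your claim that the disconnected centre ``merely dictates which tiles the connecting branches traverse'' waves away precisely the geometric content at stake. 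A concrete symptom that the mirrored geometry was not worked through: you give triangle $\mathbf{A}$ length $2^{n+m+1}-1$ with $m\ge 1$, copied verbatim from the cycle case, whereas in the anticycle configuration the triangle associated with a branch can be one size class smaller --- the paper's proof uses $2^{n+m}-1$ with $m\ge 1$.

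The paper's actual proof needs no new induction at all: rule \textbf{R2} forces all three branches of the anticycle to be infinite and mutually connected; the R1-triangle associated with any one branch is finite of length $2^{n+m}-1$; and then the single geometric observation that replaces your induction is that such a triangle forces an R2-\emph{cycle} of length $2^n-1$ at its R1-corner (the point $x$ in Figure \ref{cyclic disconnected tree fig}), contradicting Lemma \ref{cyclic connected tree} directly. Notably, your closing remark --- that closing the connection around a triangle ``produces an R2-cycle, contradicting Lemma \ref{cyclic connected tree}'' --- is essentially this mechanism, but you relegate it to a peripheral case used to rule out alternative connection routes rather than making it the whole argument. Reorganised so that the forced R2-cycle is the centrepiece, your proof would be both correct and substantially shorter, and the unestablished induction could be discarded entirely.
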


\begin{proof}
We begin with a patch containing a R2-anticycle of length $2^n -1$, and show that any tiling that extends the patch does not belong to $\CC$. Fix $n \in \{0,1,2,\ldots\}$, and suppose we start with the patch on the right-hand side of Figure \ref{illegal patches}.

\begin{figure}[ht]
\centering
\begin{tikzpicture}
\node at (0,0) {\includegraphics[width=0.6\textwidth]{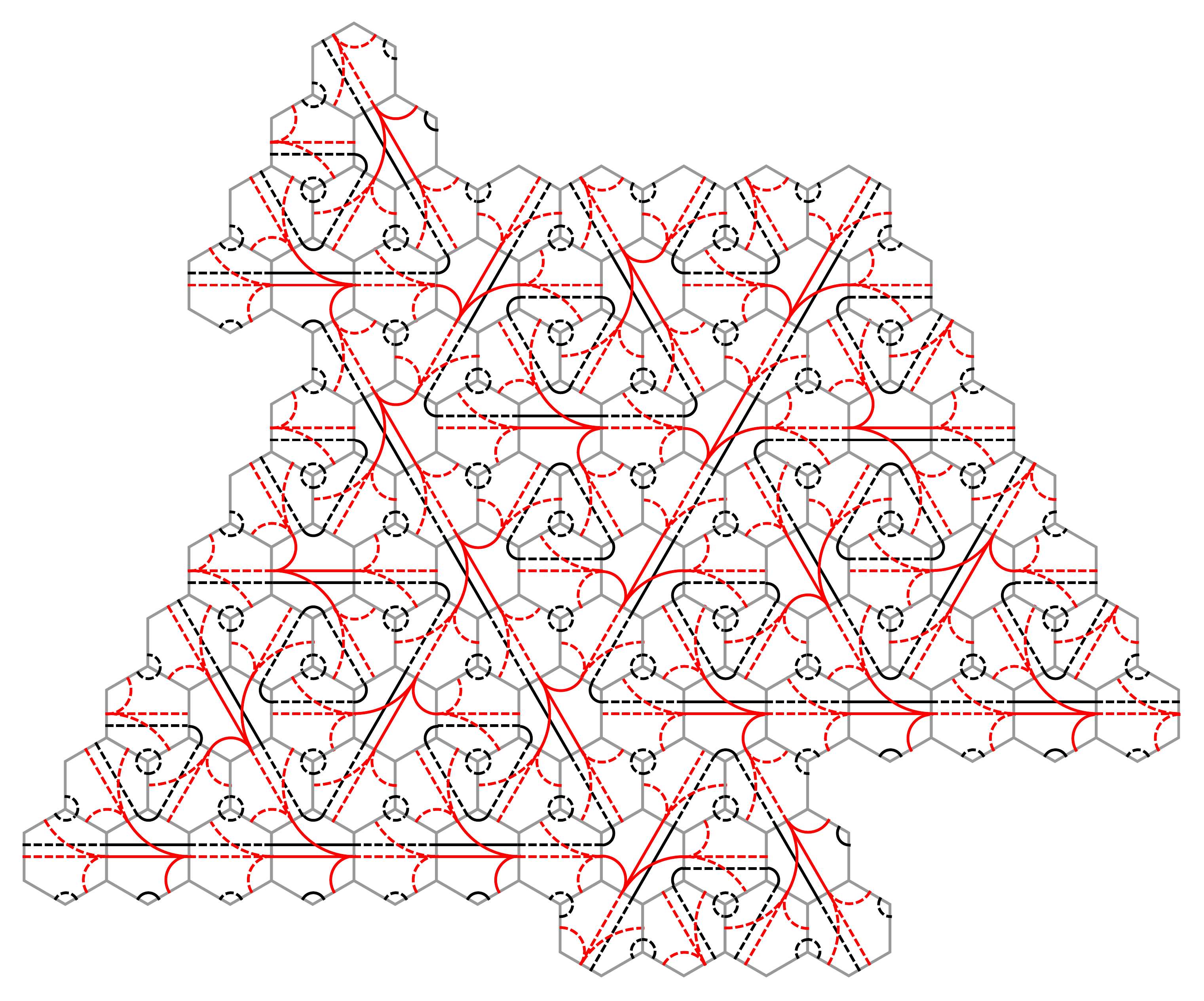}};
\node (a) at (-4.5,1.2) {$ x $};
\node at (-2.5,1.2) {} edge[<-] (a);
\node (b) at (3.5,-2.5) {R2-anticycle};
\node at (0.4,-2.2) {} edge[<-] (b);
\end{tikzpicture}
\caption{The pictorial idea of the proof of Lemma \ref{cyclic disconnected tree}, for $n \in \N$ and $m=2$}
\label{cyclic disconnected tree fig}
\end{figure}

Rule \textbf{R2} implies that all three branches of the R2-anticycle must be infinite and must all be connected. Let us concentrate on just one of these branches. Lemma \ref{no infinite triangles} implies that the R1-triangle associated with this branch cannot be infinite, and then Lemma \ref{triangle sizes} implies it must have length $2^{n+m}-1$ for some $m \in \{1,2,\ldots\}$. However, if this R1-triangle has length $2^{n+m}-1$, then an R2-cycle of length $2^n -1$ is forced where the R1-triangle associated with this branch has an R1-corner. Lemma \ref{cyclic connected tree} implies that $T$ is not in $\CC$. See Figure \ref{cyclic disconnected tree fig} for a pictorial representation, where $x$ is the location of the R2-cycle in the case $m=2$.
\end{proof}

\begin{figure}[ht]
\[
\begin{tikzpicture}
\begin{scope}
\node (tile3_1) at (0,0) [label=below:{Centre}] {\includegraphics[scale=0.135]{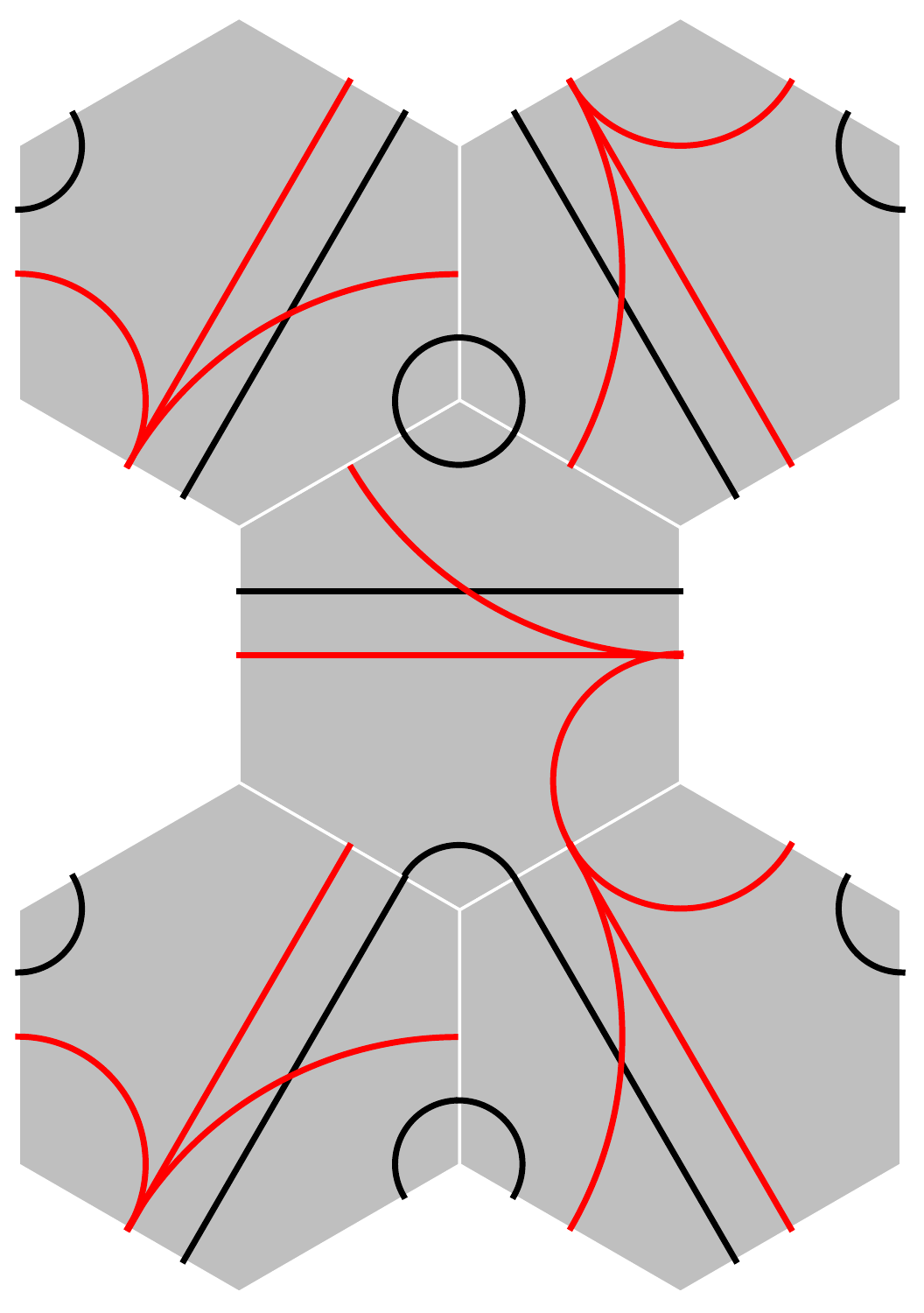}};
\node (tile3_2) at (-3,0) [label=below:{Left R1-anticycle}] {\includegraphics[scale=0.135]{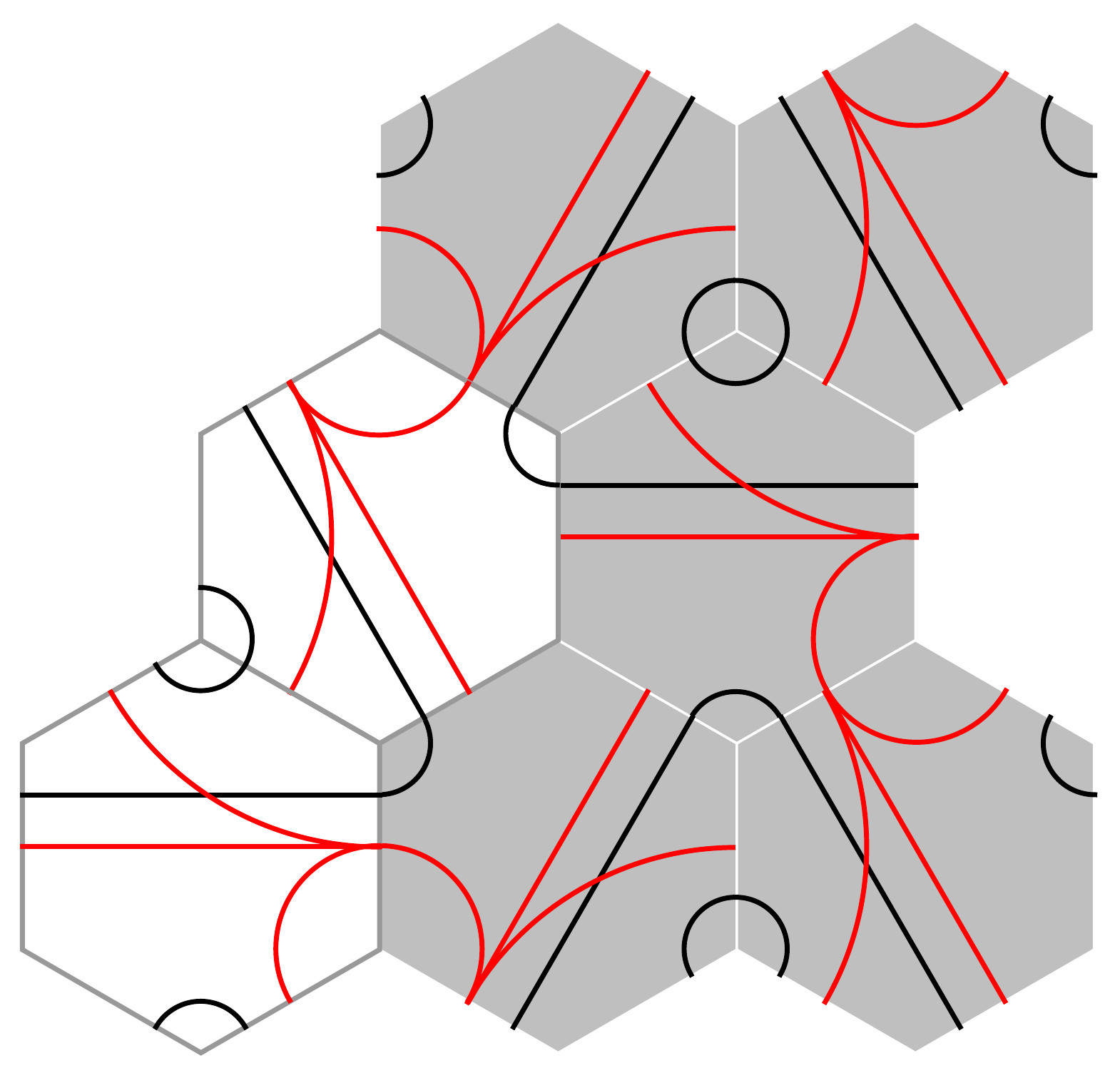}}
	edge[<-] (tile3_1);	
\node (tile3_3) at (3,0) [label=below:{Right R1-cycle}] {\includegraphics[scale=0.135]{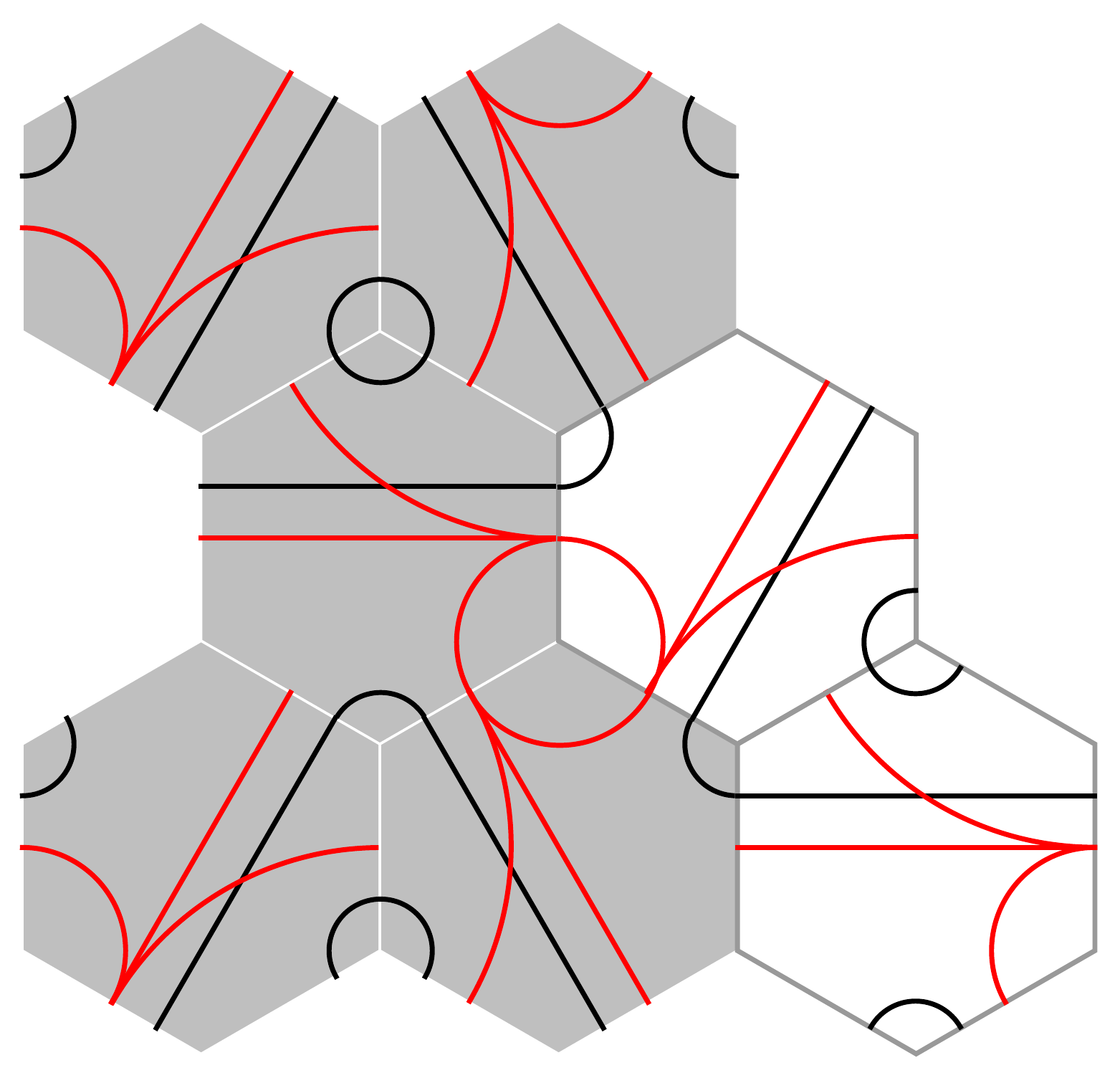}}
	edge[<-] (tile3_1);
\end{scope}
\begin{scope}[yshift=-3.5cm]
\node (tile4_1) at (0,0) [label=below:{Centre}] {\includegraphics[scale=0.135]{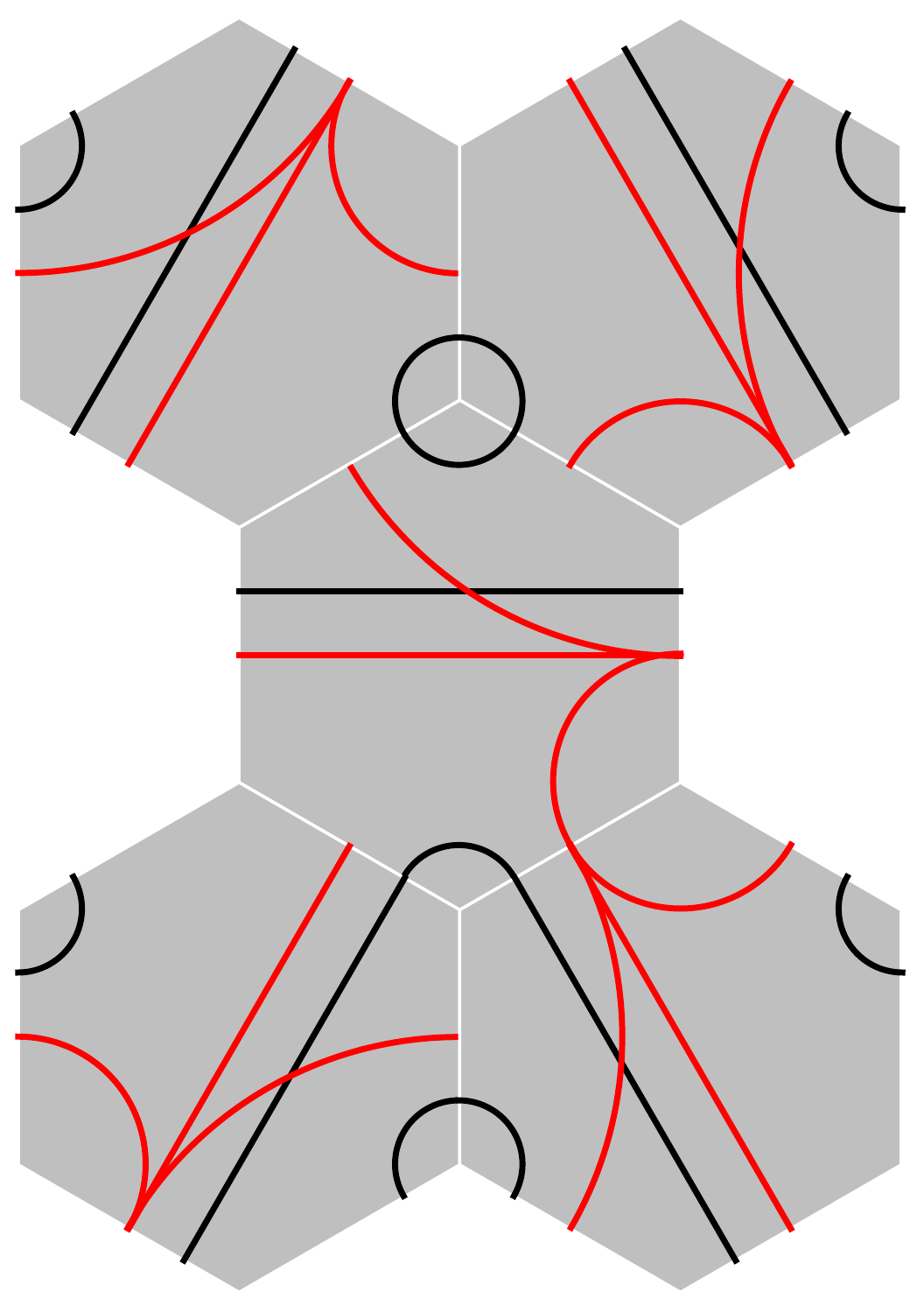}};
\node (tile4_2) at (-3.8,0) [label=below:{Left R1-anticycle}] {\includegraphics[scale=0.135]{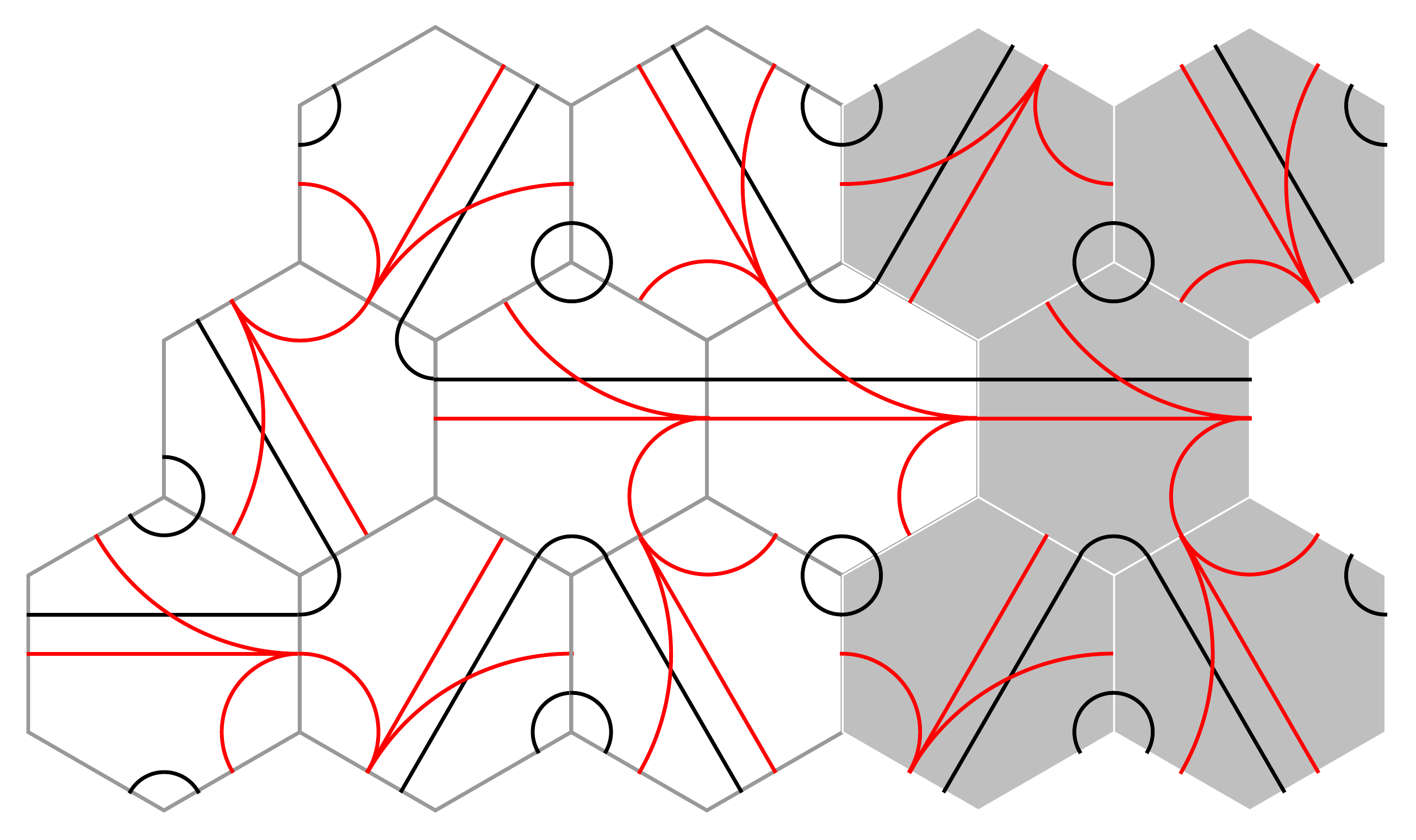}}
	edge[<-] (tile4_1);	
\node (tile4_3) at (3.8,0) [label=below:{Right R1-cycle}] {\includegraphics[scale=0.135]{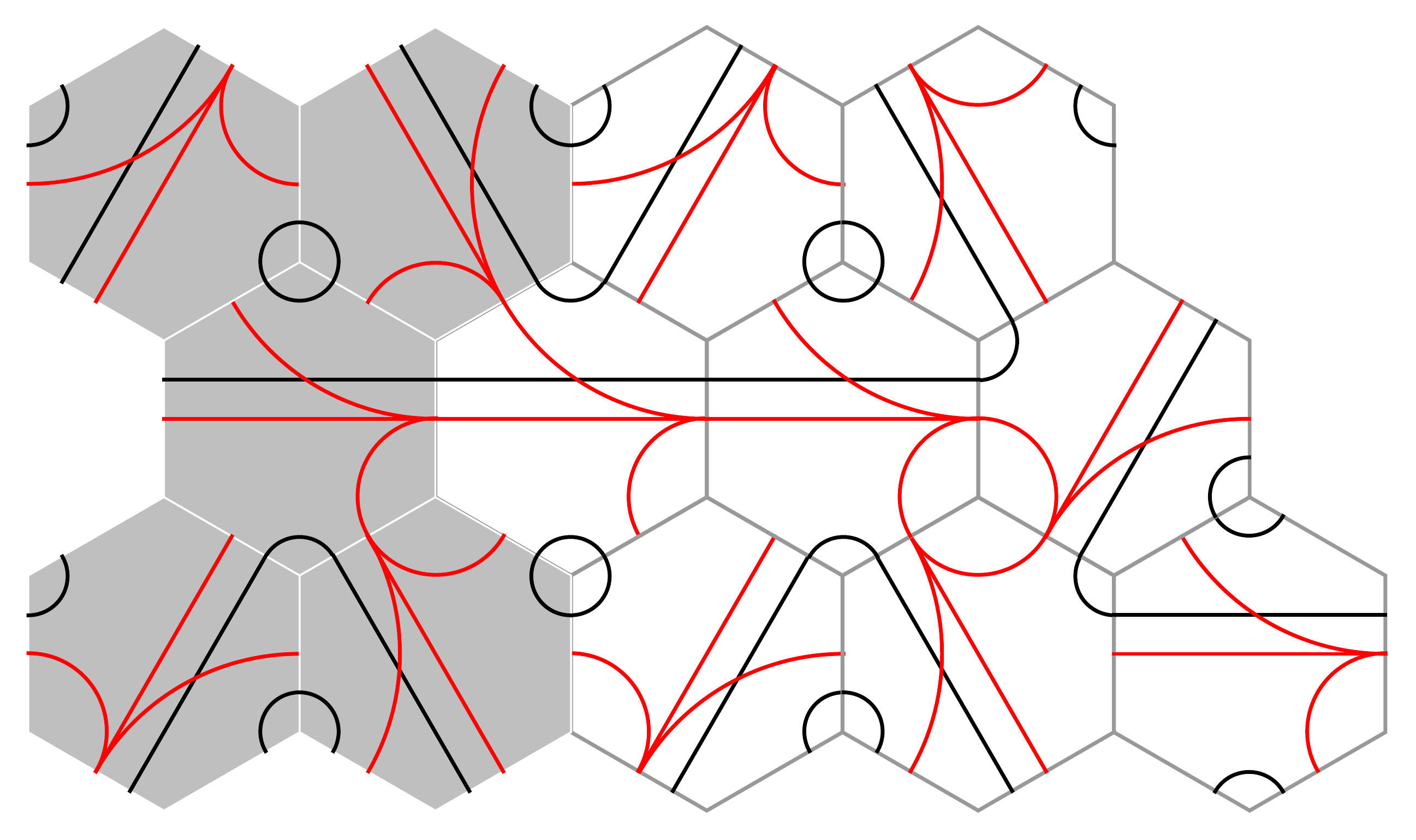}}
	edge[<-] (tile4_1);
\end{scope}
\begin{scope}[yshift=-7cm]
\node (tile4_1) at (0,0) [label=below:{Centre}] {\includegraphics[scale=0.135]{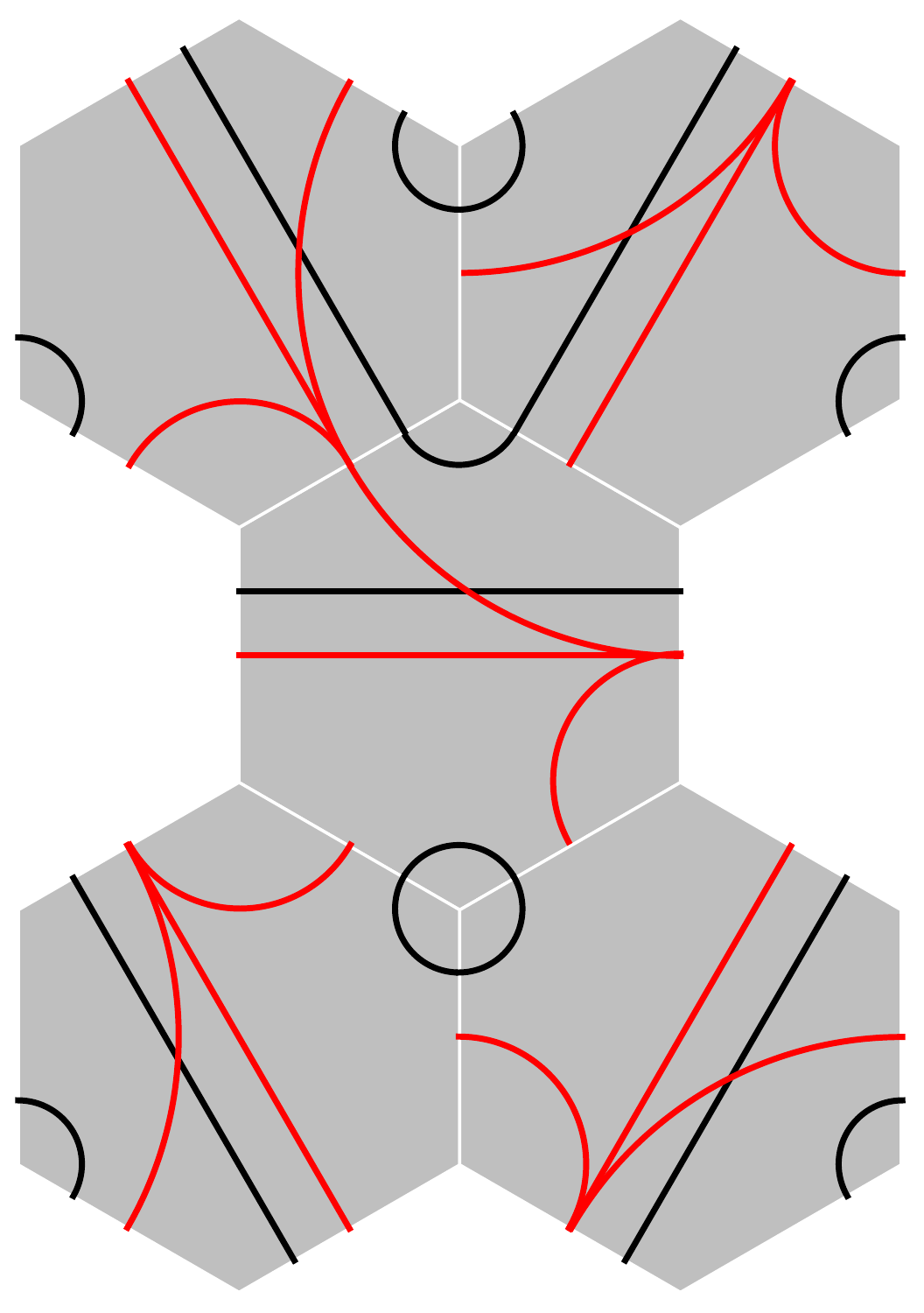}};
\node (tile4_2) at (-3.8,0) [label=below:{Left R1-anticycle}] {\includegraphics[scale=0.135]{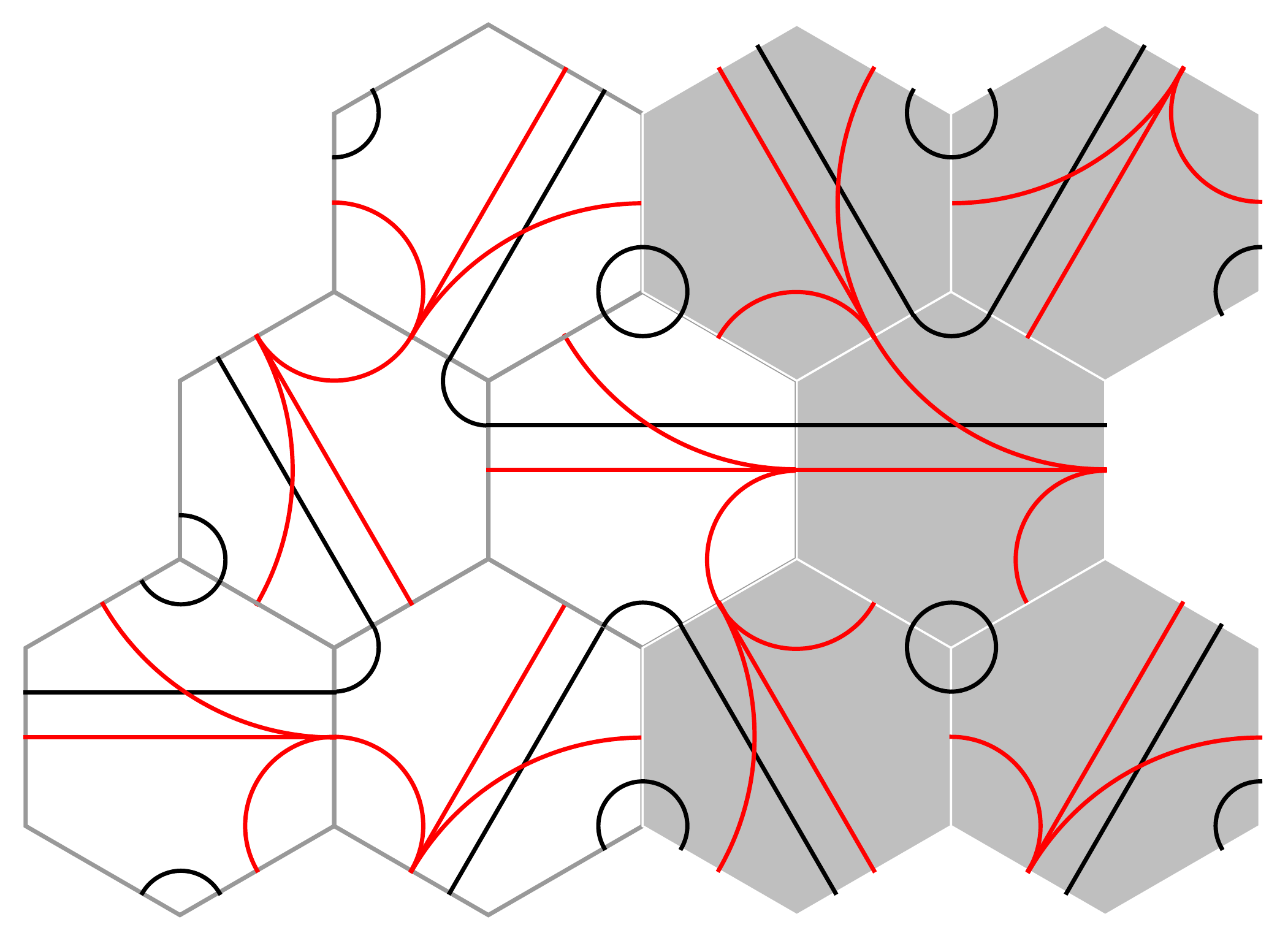}}
	edge[<-] (tile4_1);	
\node (tile4_3) at (3.8,0) [label=below:{Right R1-cycle}] {\includegraphics[scale=0.135]{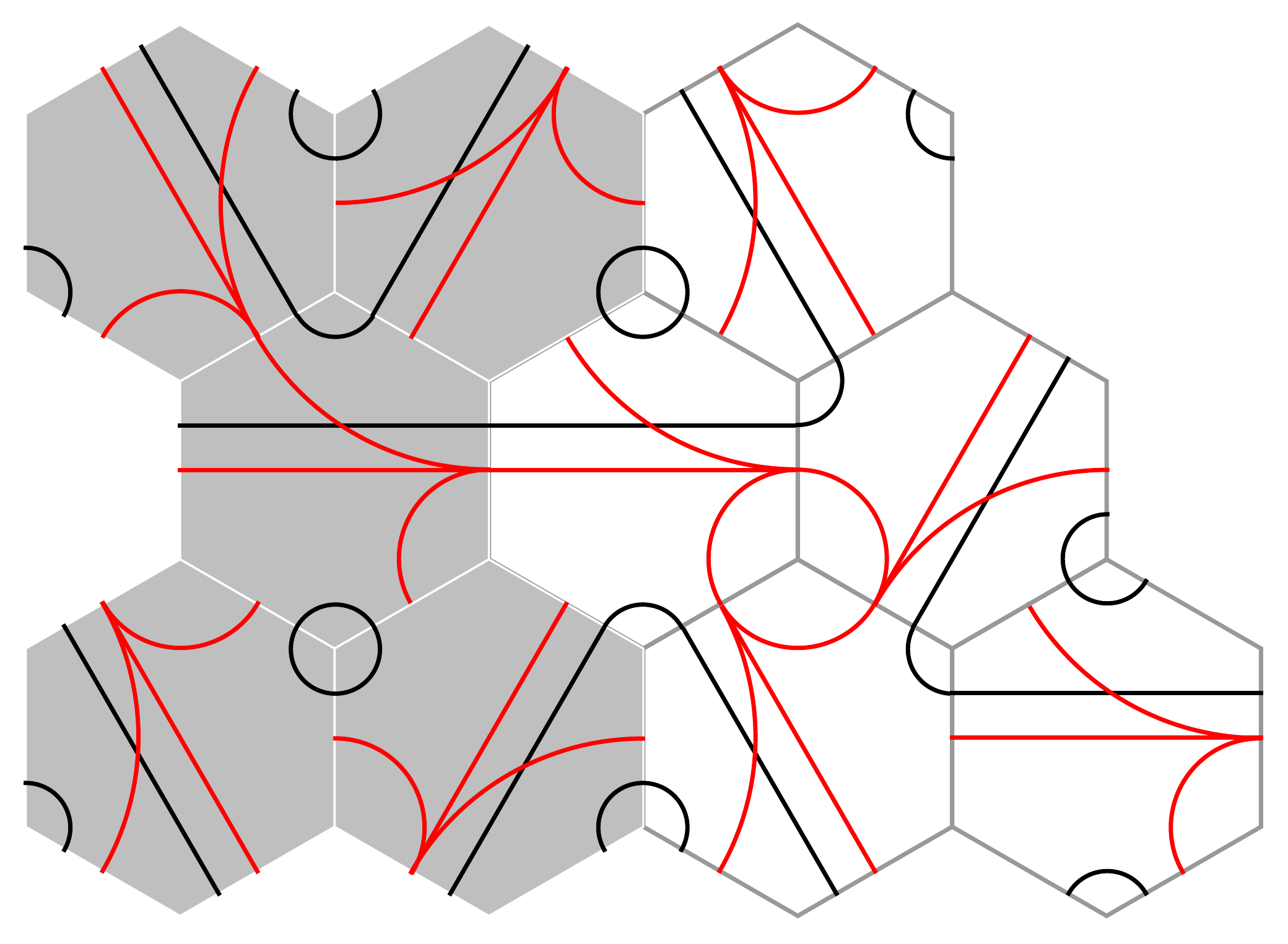}}
	edge[<-] (tile4_1);
\end{scope}
\end{tikzpicture}
\]
\caption{Three possibilities for $m=0$ and $n>0$ in the proof of Theorem \ref{main result monotile}}
\label{zero case}
\end{figure}

\begin{figure}[ht]
\[
\begin{tikzpicture}
\begin{scope}
\node (tile3_1) at (0,0) [label=below:{Centre}] {\includegraphics[scale=0.135]{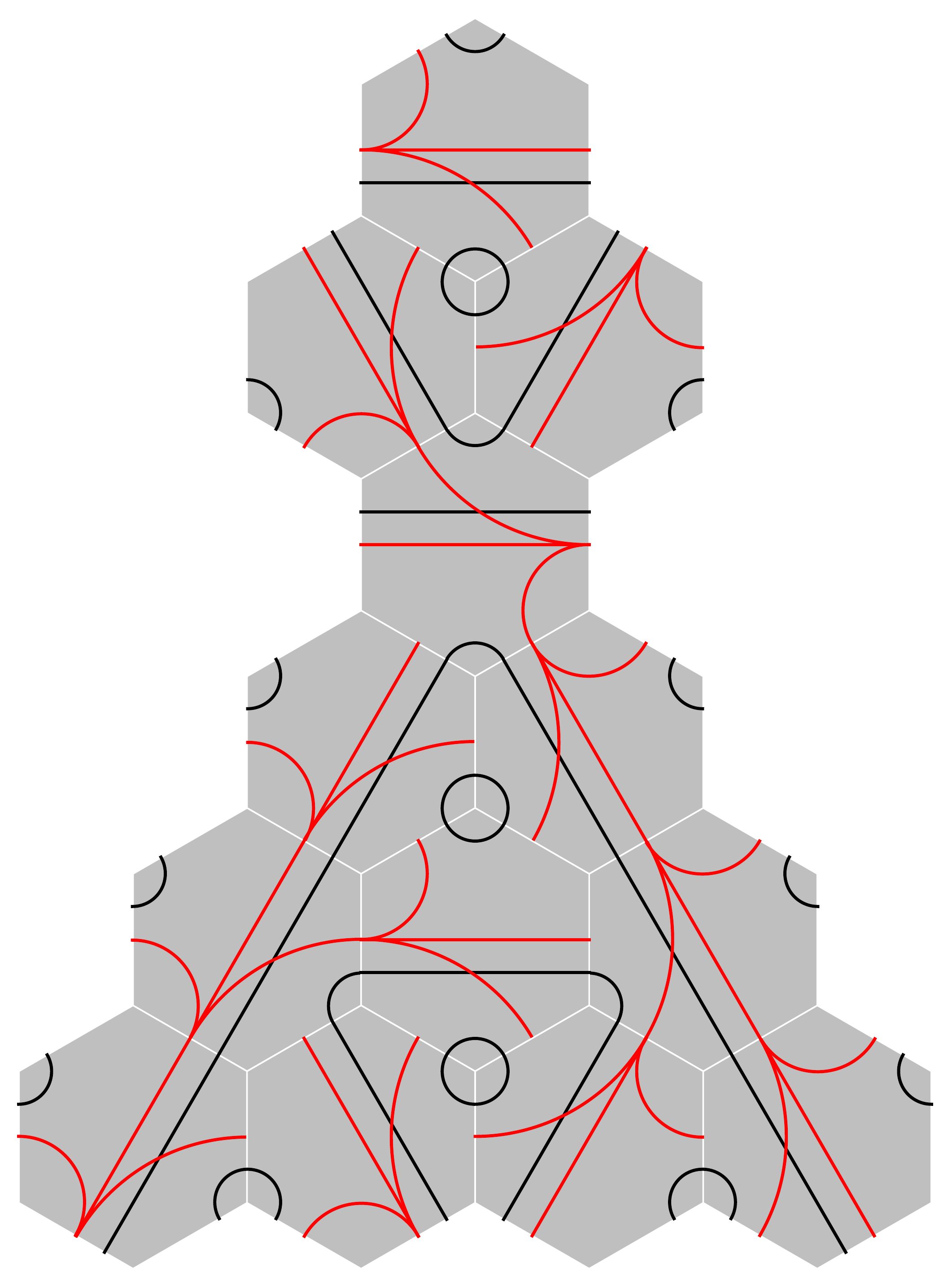}};
\node (tile3_2) at (-5,0) [label=below:{Left R1-anticycle}] {\includegraphics[scale=0.135]{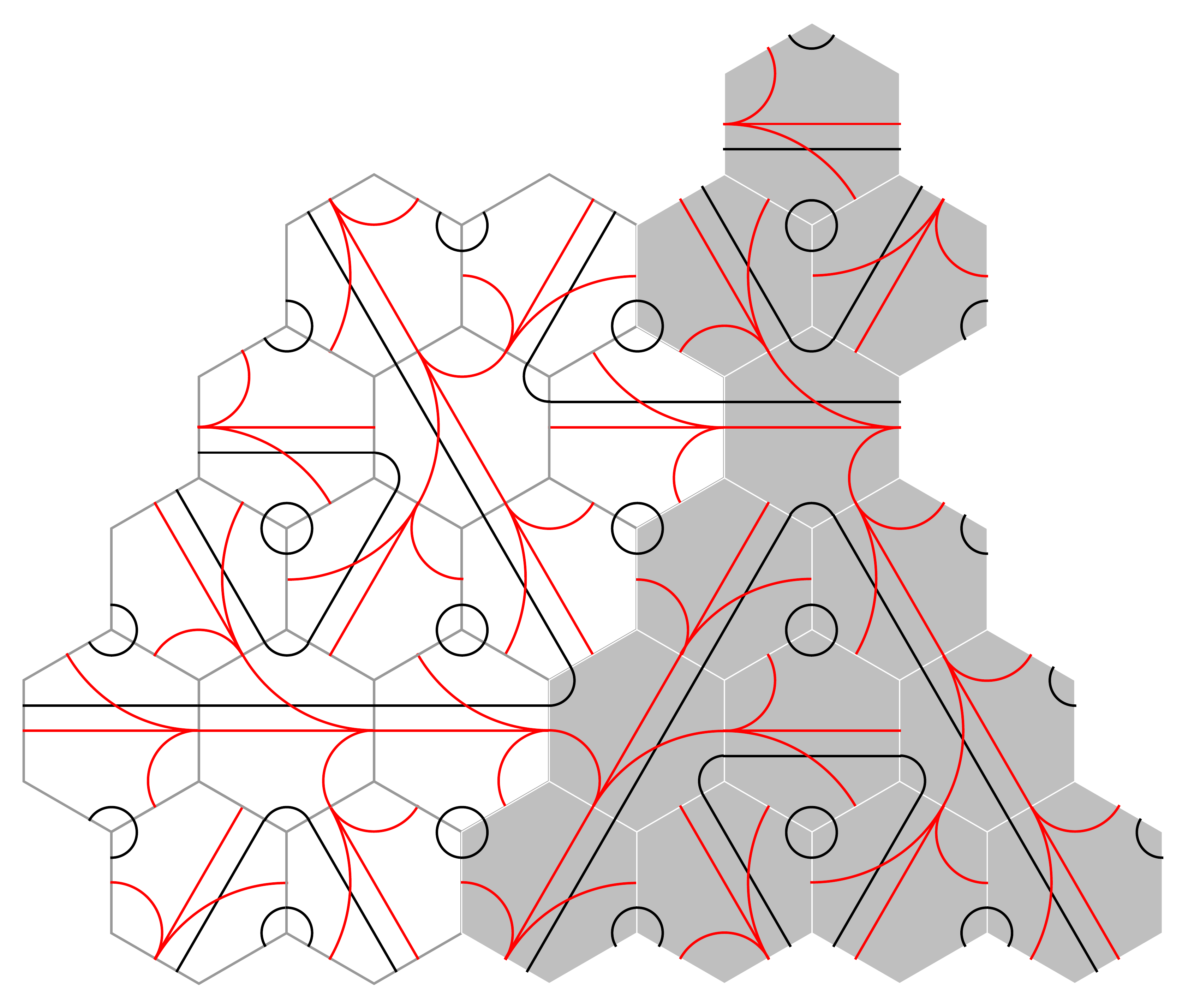}}
	edge[<-] (tile3_1);	
\node (tile3_3) at (5,0) [label=below:{Right R1-cycle}] {\includegraphics[scale=0.135]{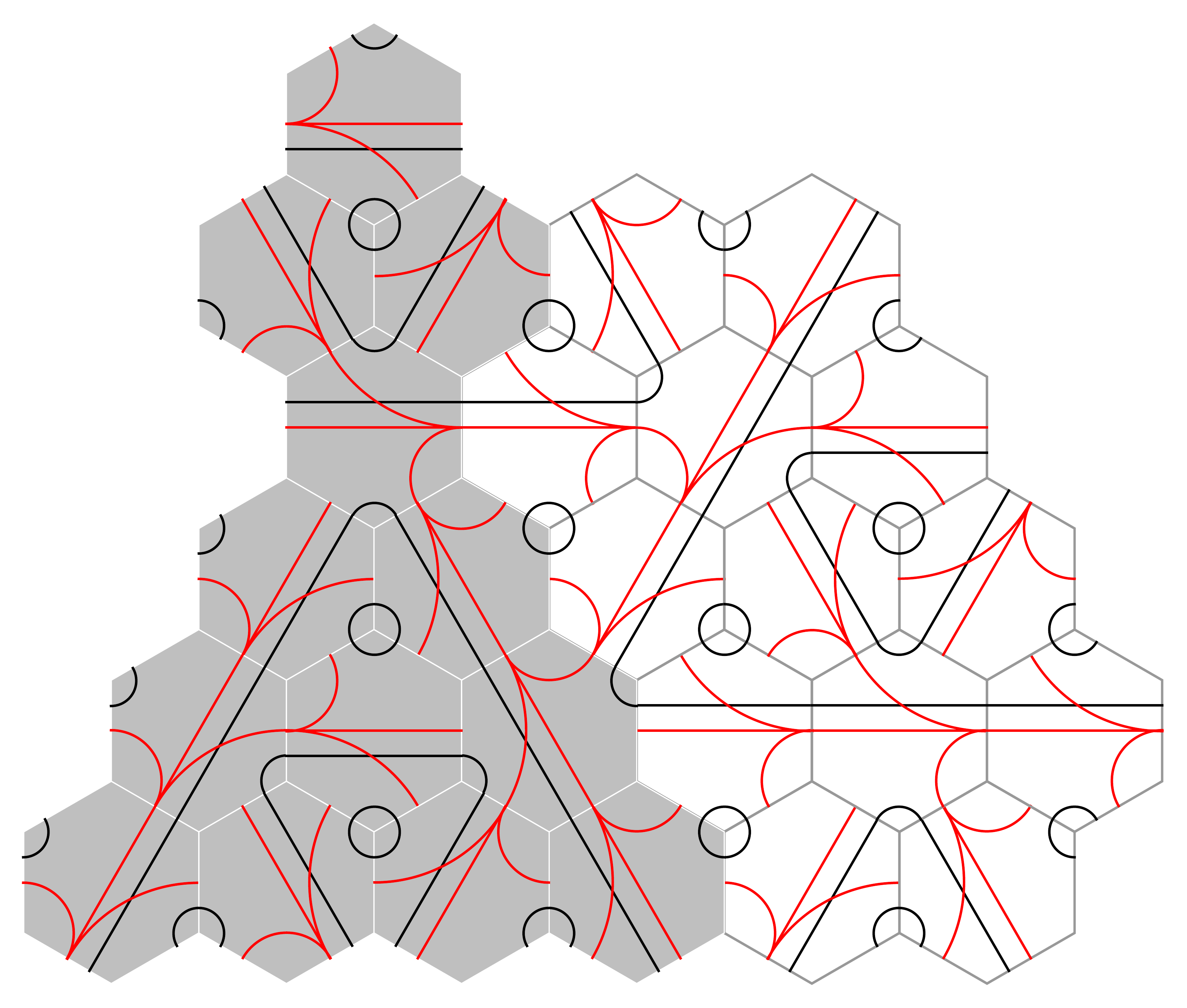}}
	edge[<-] (tile3_1);
\end{scope}	
\end{tikzpicture}
\]
\caption{One possibility for $m=1$ and $n>1$ in the proof of Theorem \ref{main result monotile}}
\label{one case}
\end{figure}

We are now able to tackle the proof of Theorem \ref{main result monotile}. The reader is encouraged to consider Figures \ref{zero case} and \ref{one case} while reading through the proof.

\begin{proof}[Proof of Theorem \ref{main result monotile}]
Propositions \ref{existence} and \ref{fault line} prove that $\CC_0$ and $\CC_1$ are both non-empty and only contain nonperiodic tilings. We will prove that $\CC=\CC_0 \cup \CC_1$, thereby proving the result.

By definition, $\CC_0$ and $\CC_1$ satisfy \textbf{R1} and \textbf{R2} so that $\CC_0 \cup \CC_1 \subseteq \CC$. We are left to prove the reverse inclusion. Suppose $T$ is in $\CC$. If all pairs of R1-triangles in $ T $ that meet at a common tile have the same length, then $T$ is in $\CC_0$.
If there exists a pair of R1-triangles which meet at a common tile, but do not have the same length, we claim that $T$ is in $\CC_1$, which would imply that $\CC \subseteq \CC_0 \cup \CC_1$.
To do this, we must show that such a tiling $T$ has a bi-infinite R1-line through it.

Suppose that $T$ contains a tile where a pair of R1-triangles meet that have lengths $2^m-1$ and $2^n-1$ for $m \neq n$.
Since these two R1-triangles meet at their respective R1-corners on a common tile, they are separated by the R1-line segment in this tile. We will argue that this line segment must extend indefinitely in both directions. For the sake of contradiction, suppose the extended R1-line segment has an R1-corner, which must occur at length $2^{m+j}$ along the straight R1-line segment from the corner of the $2^m-1$ R1-triangle, for $j \in \{1,2,\ldots\}$. At any such R1-corner, there is either an R2-cycle or an R2-anticycle of length $(2^k -1)$ for $k \in \{0,\ldots,\min\{m,n\}\}$. Since Lemma \ref{cyclic connected tree} and Lemma \ref{cyclic disconnected tree} imply that R2-cycles and R2-anticycles cannot exist in $ T $, the R1-line segment must extend infinitely in both directions.
Thus, $ T \in \CC_1 $ so that $ \CC \subseteq \CC_0 \cup \CC_1$, as required.
\end{proof}

\section{The continuous hull of our aperiodic monotile}

We conclude the paper with a brief discussion of the tiling space, or continuous hull, of the tilings in the class $\CC$. Recall that the \emph{continuous hull} of a collection of tilings $\Lambda$ is the completion of $\{T+\mathbb{R}^d \mid T \in \Lambda\}$ in the tiling metric, typically denoted by $\Omega$. Under mild assumptions, the continuous hull is a compact topological space endowed with a continuous $\mathbb{R}^d$ action, making $(\Omega,\mathbb{R}^d)$ a dynamical system. For further details see \cite[Section 1.2]{Sad}. We are interested in the continuous hull of $\CC=\CC_0 \cup \CC_1$.

\begin{thm}
All tilings in the continuous hull of $\CC$ are nonperiodic and satisfy $\textbf{R1}$. Moreover, in any such tiling, there are at most three connected components of R2-trees, and each component crosses an infinite number of tiles.
\end{thm}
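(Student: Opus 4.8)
The plan is to treat the three assertions---invariance of \textbf{R1}, nonperiodicity, and the bound on R2-tree components---separately, exploiting that \textbf{R1} together with the \emph{absence} of R2-cycles and R2-anticycles is encoded by finitely many forbidden patches and is therefore a closed condition in the tiling metric. First I would record that \textbf{R1} is a matching rule on adjacent tiles, so the set of tilings satisfying \textbf{R1} is closed; since every tiling in $\CC$, and hence every translate, satisfies \textbf{R1}, so does every limit, giving the first claim. Next, because an R2-cycle and an R2-anticycle are each a single finite patch, and ``no copy of a given finite patch occurs'' is a closed condition (if such a patch appeared in a limit tiling it would already appear in all sufficiently nearby tilings), Lemmas \ref{cyclic connected tree} and \ref{cyclic disconnected tree} propagate to the hull: no tiling in the continuous hull of $\CC$ contains an R2-cycle or an R2-anticycle.

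For nonperiodicity, the key observation is that the structural argument in the proof of Theorem \ref{main result monotile} uses only \textbf{R1} together with the nonexistence of R2-cycles and R2-anticycles; it never invokes full \textbf{R2} directly. Applying that dichotomy to a hull tiling $S$, I would conclude that either all pairs of R1-triangles meeting at a common tile share a length (the Socolar--Taylor hierarchical situation of Proposition \ref{existence}) or $S$ contains a bi-infinite R1-line (as in Proposition \ref{fault line}); the limit process may in addition produce an infinite R1-triangle or R1-ray. In each case the configuration carries R1-triangles of unbounded size, and I would argue, exactly as in Propositions \ref{existence} and \ref{fault line}, that unbounded nested or interlaced R1-triangles preclude any nonzero translational period---the only bounded-complexity periodic alternative is the lattice of Figure \ref{periodic pattern}, which is excluded precisely because it contains R2-cycles and R2-anticycles.

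To see that every R2-tree component crosses infinitely many tiles, I would follow an R2-branch through the tiling. Every R2-tree segment runs along a side of an R1-triangle and reaches its corner, where it links to the next, larger triangle in the hierarchy; either this ascent continues through finite triangles of unbounded size, or the branch lands on the side of an infinite R1-triangle, an infinite R1-ray, or the bi-infinite R1-line. In all cases the branch extends without bound, so no component can be finite.

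The hard part will be the bound of three components. Here I would show that two R2-tree components can be separated only by an infinite R1-structure, using precisely the barrier mechanism of Lemma \ref{no infinite triangles}: the proof there shows that the two sides of an infinite R1-triangle carry R2-trees that can never connect, so each maximal infinite R1-ray or R1-triangle side acts as a cut in the R2-forest. It then remains to count these cuts. Because the R1-curves meet edges only at angle $\pi/3$ and R1-triangles are equilateral, the infinite rays appearing in a hull tiling emanate from a bounded central configuration in directions that are multiples of $\pi/3$, and the way triangles meet corner-to-side forces at most a three-fold (pinwheel) arrangement of such rays, dividing the plane into at most three infinite sectors; a fourth separating ray would force two R1-triangles of unequal size to meet at a corner, creating an R2-cycle or R2-anticycle that Lemmas \ref{cyclic connected tree} and \ref{cyclic disconnected tree} forbid. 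Carefully enumerating the possible infinite R1-skeletons of a hull tiling and checking that each yields at most three R2-forest regions is where the real work lies.
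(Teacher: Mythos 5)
Your first two reductions (closedness of \textbf{R1} and closedness of the ``no R2-cycle, no R2-anticycle'' conditions) are sound, and your nonperiodicity sketch can likely be pushed through along the lines you indicate. But there is a genuine gap exactly where you place ``the real work'': the bound of three R2-tree components is never established, only deferred to an unperformed enumeration of ``possible infinite R1-skeletons.'' That enumeration is not a detail --- it is the main content of the theorem. The paper proceeds quite differently: instead of characterising hull tilings by closed conditions and then classifying them abstractly, it constructs the completion explicitly. For $\CC_1$, the only new limits are obtained by letting R1-triangles on one or both sides of the bi-infinite R1-line grow into infinite R1-triangles; at most one infinite R1-triangle can have its corner on each side of the line (two such triangles on the same side would overlap), each such triangle cuts off exactly one R2-component by the barrier mechanism of Lemma \ref{no infinite triangles}, and the R2-trees along the line form one further component, giving at most three. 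For $\CC_0$, the paper decomposes tilings into the patches $P_n$ joined by single tiles, observes that up to direct isometry there are exactly two nested limiting configurations $R=\bigcup_n R_n$ and $S=\bigcup_n S_n$, uses the fractal structure of the limiting partial tiling to show these are the \emph{only} new elements of the completion, and checks directly that each has exactly three components. Nothing in your proposal substitutes for this construction.

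Moreover, the specific mechanism you propose for the bound --- separating rays occurring in at most a three-fold pinwheel, with ``a fourth separating ray'' forcing an R2-cycle or R2-anticycle --- is not correct as stated. A tiling in the completion of $\CC_1$ with infinite R1-triangles on both sides of the bi-infinite R1-line contains six infinite R1-rays (two sides of each infinite triangle, plus the two halves of the line) and nevertheless has only three R2-components: the bi-infinite line is \emph{not} a cut, since the R2-trees along it are connected and the trees on either side attach to it, whereas each infinite R1-triangle is exactly one cut. So counting rays or sectors is the wrong invariant; what must be counted is infinite R1-triangles (together with the two exceptional tilings $R$ and $S$, whose three sectors realise the maximum in a different way), and the geometric facts that bound their number are precisely what your argument still owes.
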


\begin{proof}
We begin by considering the class $\CC_1$. As described in Proposition \ref{fault line}, a tiling is in $\CC_1$ if it contains a bi-infinite R1-line with intertwined $2^n$-periodic patterns of R1-triangles of length $2^n-1$ meeting either side of the R1-line, which typically have no length relationship with their opposite across the bi-infinite R1-line. See Figure \ref{hull of C1} for a typical patch that extends to a tiling in $\CC_1$. Note that on each side of the bi-infinite R1-line, tilings in $\CC_1$ look locally like tilings in $\CC_0$, so the completion of $\CC_1$ contains tilings in $\CC_0$. However, we will handle the tilings in $\CC_0$ later, so we ignore these elements of the completion for now.

\begin{figure}[ht]
	\centering
	\includegraphics[width=0.8\textwidth]{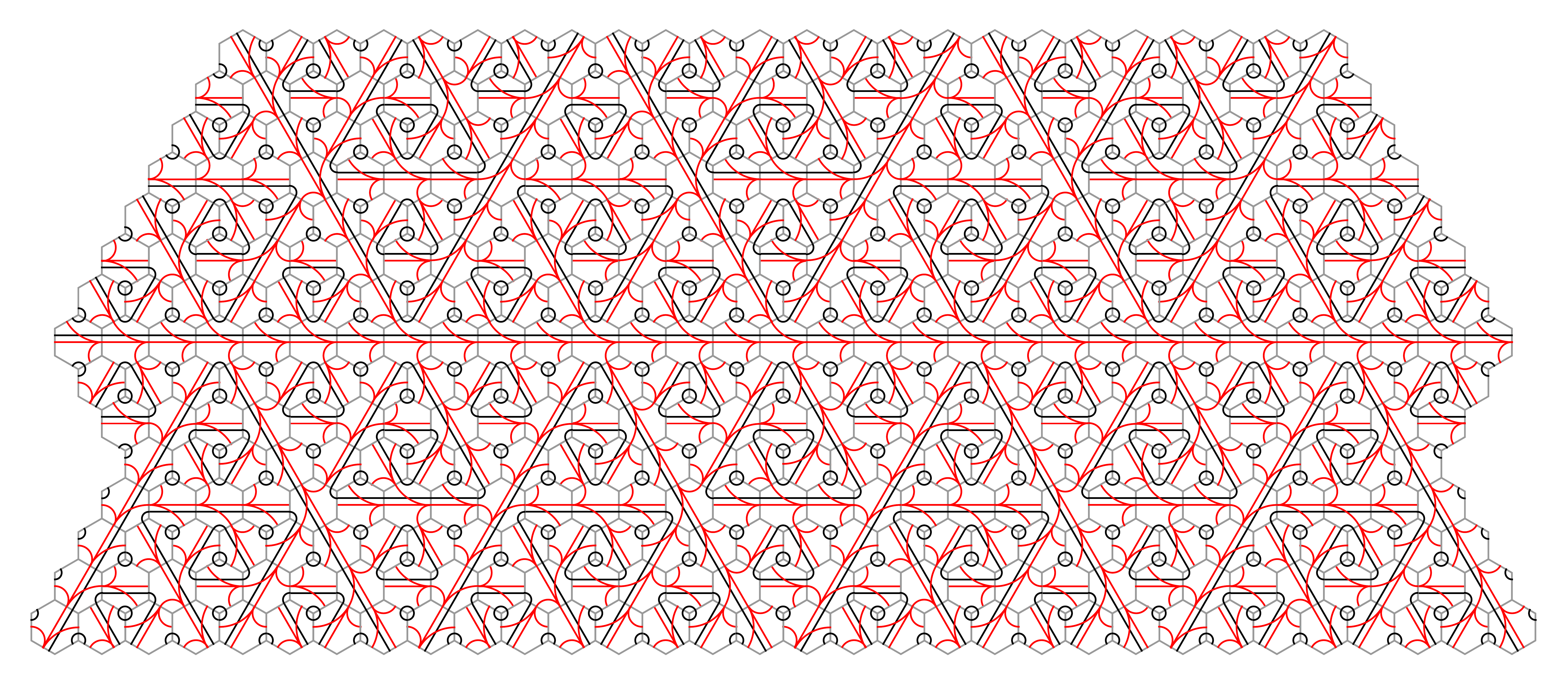}	
	\caption{A patch that extends to a typical tiling in $\CC_1$, note that R1-triangles that meet at a tile on the bi-infinite R1-line need not have the same length.}
	\label{hull of C1}
\end{figure}

We now consider tilings in the completion of $\CC_1$ that contain a bi-infinite R1-line. Since we can have arbitrarily large R1-triangles on either side of the bi-infinite R1-line, the completion of $\CC_1$ contains tilings with an infinite R1-triangle whose corner meets the R1-line. Such an infinite R1-triangle meeting a bi-infinite R1-line forces a half plane of R1-triangles with exactly two connected R2-components as shown in the proof of Lemma \ref{no infinite triangles}, and both of these components cross an infinite number of tiles. Since the behaviour of R1-triangles above and below the bi-infinite R1-line are independent, there are tilings with infinite R1-triangles on one or both sides of the bi-infinite R1-line in the completion. Thus, tilings in the completion of $\CC_1$ that contain a bi-infinite R1-line have at most three connected R2-components.

\begin{figure}[ht]
\[
\begin{tikzpicture}
\node (tile3_1) at (0,0) {\includegraphics[width=0.45\textwidth]{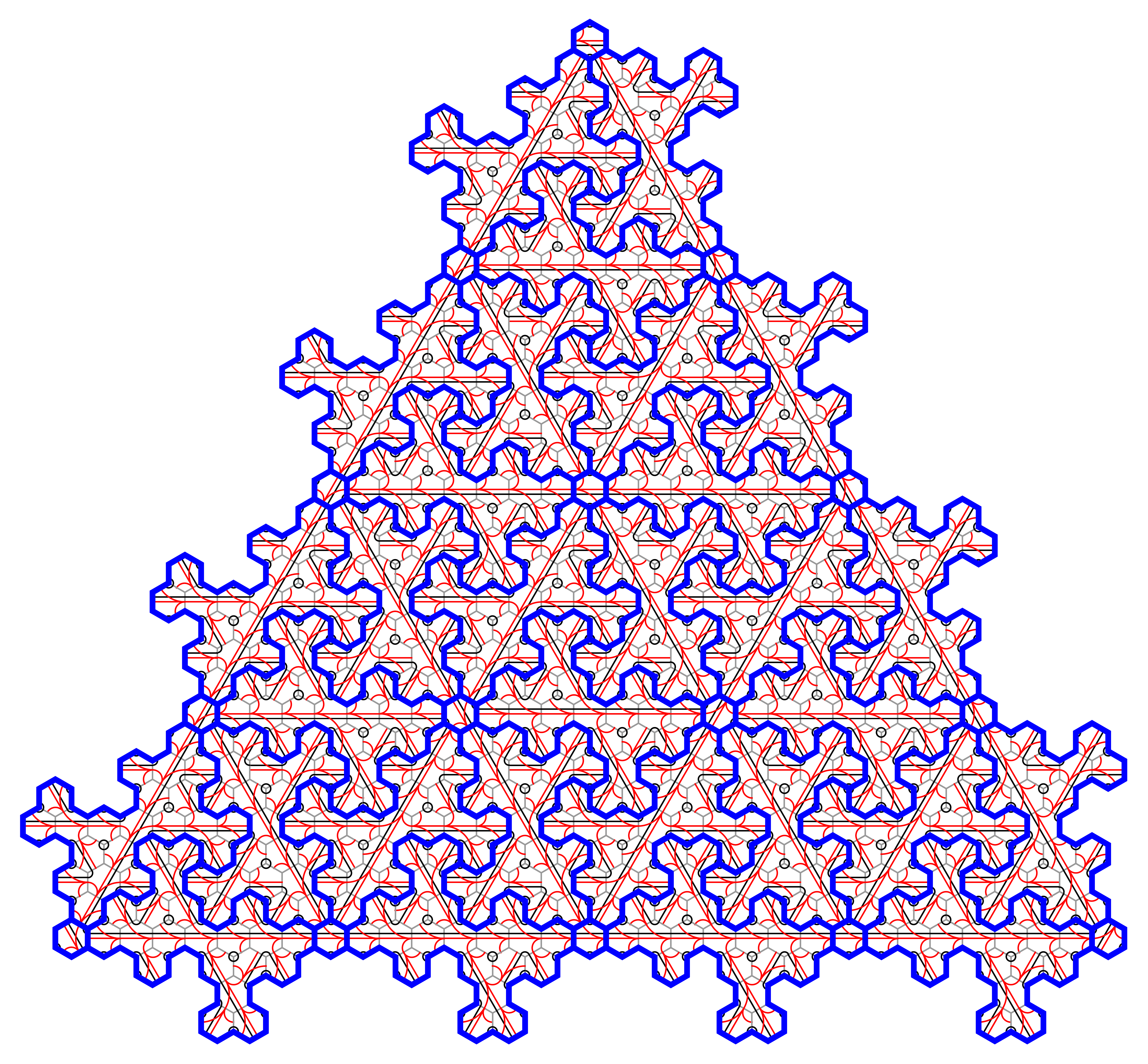}};
\node (tile3_2) at (8,0) {\includegraphics[width=0.45\textwidth]{Fractal}};	
\end{tikzpicture}
\]
\caption{All possible patches of $P_n$ connected by a single tile in $\CC_0$ are represented on the left, and a representation of the partial tiling $P$ with infinitely small tiles appears on the right}
\label{Fractal}
\end{figure}

\begin{figure}[ht]
\[
\begin{tikzpicture}
\node (tile3_1) at (0,0) {\includegraphics[scale=0.14]{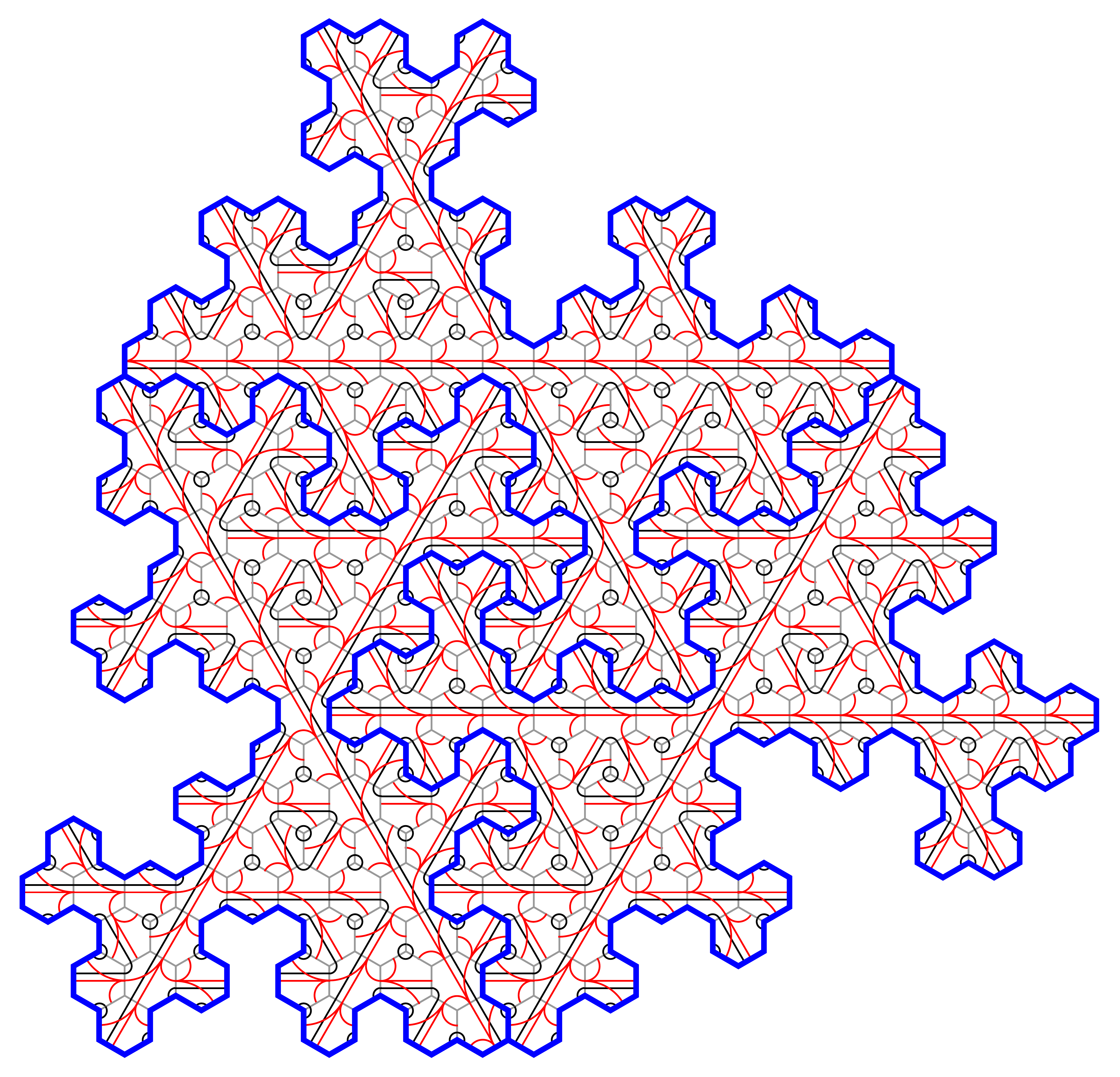} };
\node at (-0.30,-0.35) {$\bullet$};
\node (tile3_2) at (8,0) {\includegraphics[scale=0.14]{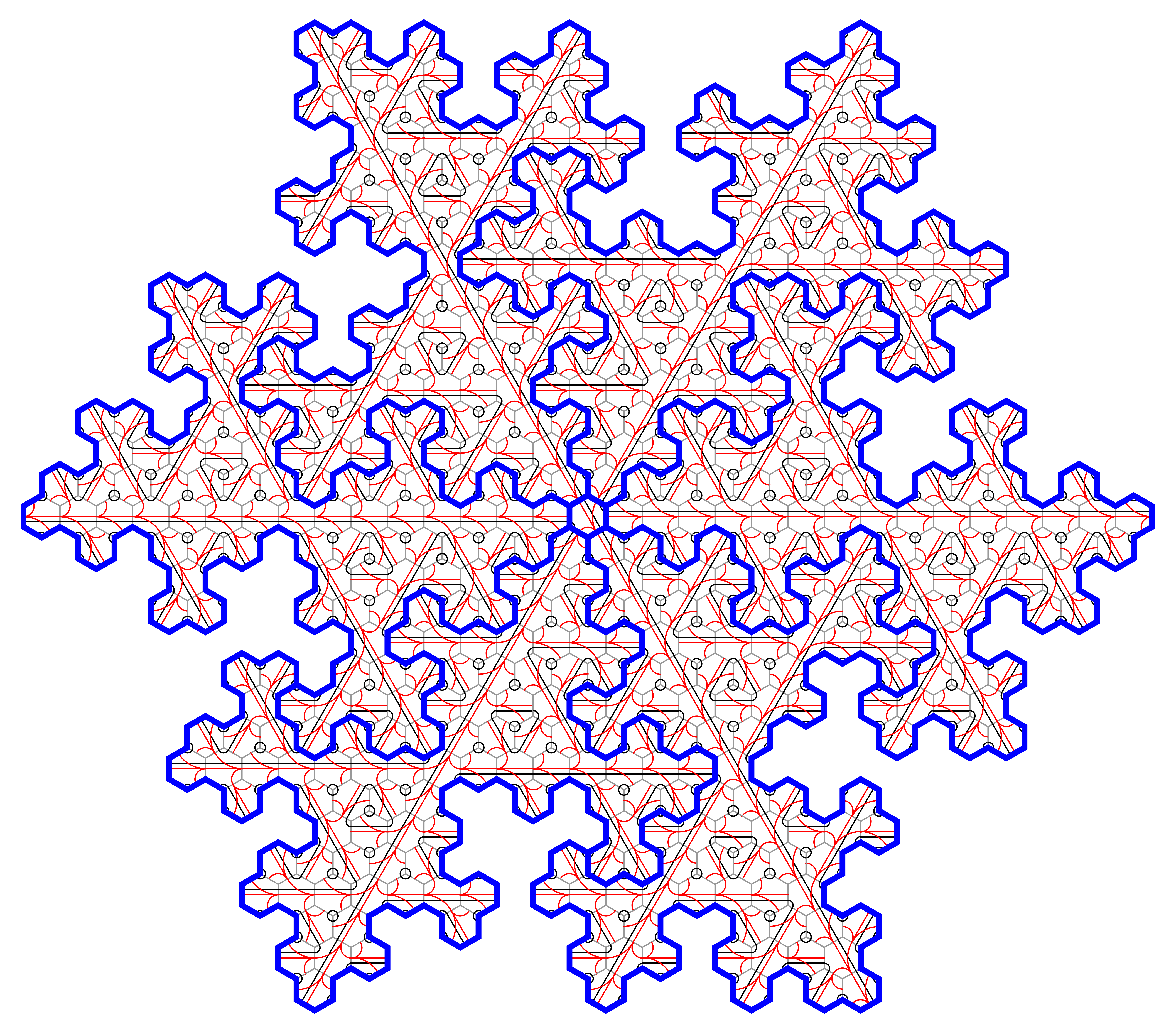}};	
\node at (8,0) {$\bullet$};
\end{tikzpicture}
\]
\caption{Cauchy sequences of the above configurations of patches $P_n$, with the central dots on the origin, converge to tilings $R$ and $S$ in the continuous hull of $\CC_0$. Each of these tilings has three connected components of R2-trees}
\label{hull of C0}
\end{figure}

We now consider the continuous hull of $\CC_0$. As described in Proposition \ref{existence}, such tilings have successively larger hexagonal grids of interlaced R1-triangles. For any $n \in \N$, a tiling of interlaced R1-triangles decomposes into patches $P_n$ connected by single tiles, where $P_n$ is defined in Proposition \ref{existence} (also see Figure \ref{dotted fit}). The image on the left-hand side of Figure \ref{Fractal} shows all possible arrangements of $P_2$ connected by a single tile. Up to direct isometry, there are exactly two configurations. These are shown for $P_3$ in Figure \ref{hull of C0}. For $n \in \N$, let us call these two patches $R_n$ and $S_n$. Placing the origin at the centre of each patch, as shown in Figure \ref{hull of C0}, we see that $R_n \subset R_{n+1}$ and $S_n \subset S_{n+1}$. For example, the reader can compare the patches in Figure \ref{Fractal}, where $n=2$, with the patches in Figure \ref{hull of C0}, where $n=3$. Therefore,
\[
R:=\bigcup_{n=0}^\infty R_{n} \qquad \text{ and } \qquad S:=\bigcup_{n=0}^\infty S_{n}
\]
are tilings satisfying \textbf{R1}. Thus, in addition to tilings in $\CC_0$, the continuous hull of $\CC_0$ contains direct isometries of the tilings $R$ and $S$. As $n$ tends to infinity, the patches $P_n$ converge to a partial tiling, which can be scaled down at each step so as to be depicted as the fractal on the right-hand side of Figure \ref{Fractal}. The fractal nature of this partial tiling implies that, up to direct isometry, $R$ and $S$ are the only two additional elements in the completion of $\CC_0$. Notice that the tilings $R$ and $S$ have exactly three connected components of R2-trees and each component crosses an infinite number of tiles, as desired.
\end{proof}

The astute reader will notice that the hull of $\CC$ contains all possible tilings satisfying \textbf{R1} with the condition that either $\CC_0$ or $\CC_1$ holds. That is, the dendrite rule \textbf{R2} ensures that there are no periodic tilings, but does not otherwise factor into the final description of the hull.

Finally, we comment on the differences between tilings coming from our rules and Socolar-Taylor tilings. In the Socolar-taylor tilings, their rule \textbf{R2} is designed to ensure that all the R1-patterns in their tilings fall into the class $\CC_0$. Since their rules are local matching rules, their hull is automatically complete. On the other hand, none of the R1-patterns in $\CC_1 \setminus \CC_0$ are possible in the Socolar-Taylor tilings, and hence we have two genuinely different classes of tilings that are not MLD to one another.

\end{document}